\numberwithin{equation}{section}
\DeclareMathAlphabet{\mathpzc}{OT1}{pzc}{m}{it}
\def \bigchi {\mbox{\Large$\chi$}}
\def \al {\alpha } \def \be {\beta } \def \de {\delta }
\def \te {\theta} \def \fhi {\varphi} \def \ep {\epsilon} 
\def \ga {\gamma} \def \la {\lambda} 
  \def\vfi{\varphi}
\def \constantB {\mbox{\large$\varsigmaup$}} 
\def \secondelta {\sigma} 
\def \bigchi {\mbox{\Large$\chi$}} 
\def \crit {\mathscr{C}} 
\def \Disc {\mathcal{D}} 
\def \disc {\mathscr{D}} 
\def \cc {\mathscr{S}} 
\def \B {\mathcal{B}}
\def \H {\mathcal{H}}
\def \cL {\mathcal{L}}
\def \ra {\rightarrow }
\def \ilim {\liminf_{n\ra\infty}\, }
\def \soma {\sum_{i=1}^{n}} 
\def \s {\textquoteright}
\newcommand{\prob}{\operatorname{p}}
\newcommand{\leb}{\operatorname{m}}
\newcommand{\Leb}{\operatorname{Leb}}
\newcommand{\dist}{\operatorname{dist}}
\newcommand{\Y}{\ensuremath{\mathbb{Y}}}
\newcommand{\R}{\ensuremath{\mathbb{R}}}
\newcommand{\circulo}{\ensuremath{\mathbb{S}}}
\newcommand{\toro}{\ensuremath{\mathbb{X}}}
\newcommand{\N}{\ensuremath{\mathbb{N}}}
\newcommand{\Z}{\ensuremath{\mathbb{Z}}}
\newcommand{\T}{\ensuremath{\mathbb{T}}}
\newcommand{\p}{^\prime}
\newcommand{\sumaupla}[1]{a_1+a_2+\ldots+a_#1}
\newcommand{\measure}{\varpi}
\theoremstyle{plain}
\newtheorem{maintheorem}{Theorem}
\newtheorem{maincorollary}[maintheorem]{Corollary}
\newtheorem{theorem}{Theorem}[section]
\newtheorem{proposition}[theorem]{Proposition}
\newtheorem{corollary}[theorem]{Corollary}
\newtheorem{lemma}[theorem]{Lemma}
\newtheorem{claim}[theorem]{Claim}
\theoremstyle{definition}
\newtheorem{example}{Example}
\newtheorem{remark}[theorem]{Remark}
\newtheorem{definition}[theorem]{Definition}
\title[ACIM for random non-uniformly expanding maps]{Absolutely
continuous invariant measures for random non-uniformly expanding maps}
\date{\today}
\author{Vitor Araujo}
\address{V. Araujo, 
Departamento de Matem\'atica, Universidade Federal da Bahia
\\  Av. Ademar de Barros s/n, 40170-110 Salvador, Brazil.}
\email{vitor.d.araujo@ufba.br and vitor.araujo.im.ufba@gmail.com}
\author{Javier Solano}
\address{Javier Solano, Instituto de Matem\'atica e Estat\'istica,
  Universidade Federal Fluminense, Rua M\'ario Santos Braga,
  s/n, Valonguinho, 24.020-140 Niter\'oi, RJ-Brazil}
\email{jsolano@impa.br}
\subjclass[2010]{Primary:
37D25. 
Secondary:37E05, 37HXX
}
\begin{document}

\begin{abstract}
  We prove existence of (at most denumerable many)
  absolutely continuous invariant probability measures for
  random one-dimensional dynamical systems with asymptotic
  expansion.  If the rate of expansion (Lyapunov exponents)
  is bounded away from zero, we obtain finitely many ergodic
  absolutely continuous invariant probability measures,
  describing the asymptotics of almost every point. We also
  prove a similar result for higher-dimensional random
  non-uniformly expanding dynamical systems.
  The results are consequences of the
    construction of such measures for skew-products with
    essentially arbitrary base dynamics and asymptotic
    expansion along the fibers.  In both cases our method
  deals with either critical or singular points for the
  random maps.
\end{abstract}

\maketitle


\section{Introduction}
\label{sec:introd}

In this work we study the existence of absolutely continuous
invariant probability measures for the random iteration of
maps of the interval, or of a compact manifold, which have
positive Lyapunov exponents but can also have critical
points or singularities. We also obtain a decomposition of
each absolutely continuous invariant measure into at most
denumerably many absolutely continuous ergodic components.
This can be seen as an extension of the results of
Pelikan~\cite{Pelik84}, Morita~\cite{Mor85} and
Buzzi~\cite{Buzz00} which deal with random iterations of
piecewise expanding maps.

It is well-known that the dynamics of random maps can be
modeled by a skew-product map where the ``noise'' is driven
by the ergodic base transformation. This is the general form
of a Random Dynamical System; see \cite[Definition
1.1.1]{arnold-l-1998}. Hence our results can also be seen as
a study of the dynamics of skew-product whose maps along the
one-dimensional fibers have critical points or
discontinuities, positive Lyapunov exponents and very weak
conditions on the base transformation. We mention the work
of Denker and Gordin \cite{DenkGord99} together with
Heinemann \cite{DenkGordHein02} where equilibrium states for
random bundle dynamics were studied under the assumption of
expansion along the fibers.

As an example of application of our results, let us consider
the map $\vfi(\theta,x)=(\alpha(\theta),f(\theta,x))$ with
$\alpha:\circulo^1\to\circulo^1$ a continuous map with an
ergodic $\alpha$-invariant probability measure $\nu$; and
$f_\theta(x)=a(\te)-x^2$ for $a(\te)$ continuous so that
$\vfi$ is well-defined, and $\leb$ the Lebesgue measure on
the interval $[-2,2]$. We use the notation
$\fhi^n(\te,x)=(\al^n(\te), f_\te^n(x))$, where we write
$\theta_n=\alpha^n(\theta), n\ge0$ and
$f^n_\theta(x)=(f_{\theta_{n-1}}\circ\dots\circ
f_{\theta_0})(x)$, which can be regarded as the random
composition of maps from the family $f_\theta$ chosen
according to the measure preserving transformation $\alpha$.

\begin{corollary}
  \label{cor:Viana-noexpansion}
  Assume that there exists $\la>0$ such that, for
$\nu\times\leb$-a.e. $(\te,x)$,
  \begin{equation} \label{eq:posLyap}
\ilim \frac{1}{n} \log |Df_\te^n(x)|\geq\la
\end{equation}
  Then $\vfi$ admits finitely many ergodic invariant
  probability measures absolutely continuous with respect to
  $\nu\times \leb$. Moreover, $\nu\times\leb$-a.e.
$(\te,x)$ belongs to the basin of one of these measures.
\end{corollary}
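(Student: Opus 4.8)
The statement is a direct application of the paper's main one-dimensional theorem: under asymptotic expansion one obtains absolutely continuous invariant measures, and when the Lyapunov exponents are uniformly bounded away from zero one obtains finitely many ergodic ones whose basins cover a set of full measure. The plan is therefore to check that the random quadratic family $\vfi(\theta,x)=(\alpha(\theta),a(\theta)-x^2)$ satisfies all the standing hypotheses of that theorem and that \eqref{eq:posLyap} is precisely its dynamical hypothesis, and then to quote it.

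First I would verify the structural conditions on the fibre maps. Each $f_\theta(x)=a(\theta)-x^2$ is a polynomial self-map of $[-2,2]$ (well-defined by assumption) with the single critical set $\mathcal{C}=\{0\}$, non-degenerate and non-flat with uniform constants: $f_\theta''\equiv-2$ and, crucially, $|f_\theta'(x)|=2|x|=2\,\dist(x,\mathcal{C})$, so the non-flatness inequalities and the distortion-type bounds (comparing $\log|f_\theta'|$, $-\log|f_\theta'|$ and $|f_\theta''/f_\theta'|$ with $\pm\log\dist(x,\mathcal{C})$ and $\dist(x,\mathcal{C})^{-1}$ up to fixed constants) all hold uniformly in $\theta$. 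Continuity of $\theta\mapsto a(\theta)$ gives joint continuity of $(\theta,x)\mapsto f_\theta(x)$ and forward invariance of $[-2,2]$, and the base $\alpha$ is continuous with an ergodic invariant probability $\nu$ — exactly the assumptions the theorem makes on the base.

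Next I would match \eqref{eq:posLyap} with the theorem's dynamical hypotheses, namely non-uniform expansion together with slow approximation to the critical set. By the chain rule and $|Df_{\alpha^j\theta}(f_\theta^j(x))|=2\,\dist(f_\theta^j(x),\mathcal{C})$,
\[
\log|Df_\theta^n(x)|=n\log 2-\sum_{j=0}^{n-1}\log\frac{1}{\dist(f_\theta^j(x),\mathcal{C})},
\]
so \eqref{eq:posLyap} says at once that, for $(\nu\times\leb)$-almost every $(\theta,x)$,
\[
\liminf_{n\to\infty}\frac1n\log|Df_\theta^n(x)|\ge\lambda>0
\qquad\text{and}\qquad
\limsup_{n\to\infty}\frac1n\sum_{j=0}^{n-1}\log\frac{1}{\dist(f_\theta^j(x),\mathcal{C})}\le\log2-\lambda,
\]
the first being non-uniform expansion with rate uniformly bounded away from zero and the second the required control of the recurrence to $\mathcal{C}$. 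The same identity also gives the elementary frequency bound $\#\{\,0\le j<n:\dist(f_\theta^j(x),\mathcal{C})<\delta\,\}\le\big(n(\log4-\lambda)+o(n)\big)/\log(2/\delta)$, which is what one uses to convert the averaged estimate above into its truncated-at-scale-$\delta$ form. Granting these verifications, the main theorem applies and yields the finitely many ergodic $\nu\times\leb$-absolutely continuous invariant probability measures together with the statement that their basins cover $(\nu\times\leb)$-almost every point.

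The main obstacle is precisely this matching step. The displayed identity makes the \emph{averaged} recurrence bound immediate, but one has to be careful that what is extracted is in exactly the form the theorem requires; if that form is the truncated slow-approximation condition with an arbitrarily small constant, the averaged bound alone is not enough and must be supplemented by the frequency estimate above and, if needed, by a bound-period control of the depth of the deep returns of $f_\theta^j(x)$ to $\mathcal{C}$. Once the hypotheses are in place the conclusion is obtained simply by invoking the main theorem, including its finiteness and ergodic-decomposition part under the uniform lower bound $\lambda>0$.
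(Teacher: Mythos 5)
Your proof misidentifies the hypotheses of the paper's one-dimensional result. Theorem~\ref{mthm:PrincipalA} and Corollary~\ref{mcor:finiteergodicmeasures} in the interval-fibre setting require only conditions $(H_1)$--$(H_4)$ together with positive Lyapunov exponents along the vertical direction; they explicitly do \emph{not} require slow recurrence to the critical set (the paper stresses this: ``Along one-dimensional fibers \dots with $f_\theta$ having negative Schwarzian, we assume non-uniform expansion only: we do not assume slow recurrence''). The non-degeneracy conditions $(S1)$--$(S3)$ and the slow-recurrence condition \eqref{eq:slow-rec} that you spend the bulk of your argument trying to verify belong to the higher-dimensional Theorem~\ref{mthm:PrincipalB}. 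By importing them, you create a problem that does not need solving — and one you cannot actually solve from \eqref{eq:posLyap} alone: your chain-rule identity $\log|Df_\theta^n(x)|=n\log 2-\sum_{j<n}\log\dist(f_\theta^j(x),0)^{-1}$ only bounds the full recurrence sum by the \emph{fixed} constant $\log 2-\la$, whereas \eqref{eq:slow-rec} demands, for every $\epsilon>0$, a truncated sum below $\epsilon$. The frequency estimate you sketch controls how often the orbit enters a $\delta$-neighbourhood of $0$, but not the depth of those visits, and without a bound-period analysis (a genuine extra input) the $\epsilon$-smallness cannot be extracted. You acknowledge this gap yourself; the point is that it is a gap you introduced, not one the paper has to fill.

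The correct, much shorter route is to check $(H_1)$--$(H_4)$ and then quote Corollary~\ref{mcor:finiteergodicmeasures}. For $f_\theta(x)=a(\theta)-x^2$ on $I_0=[-2,2]$: $(H_1)$ holds since $\crit_\theta=\{0\}$ for every $\theta$, $\disc_\theta=\emptyset$, $|\partial_x f(\theta,x)|=2|x|\le 4$, and $\cc=\toro\times\{0\}$ is Borel; $(H_2)$ holds since $\alpha$ is continuous (so $\Disc_\alpha=\emptyset$) with ergodic invariant $\nu$; $(H_3)$ holds since $a(\cdot)$ is continuous, so $F:\theta\mapsto f_\theta$ is continuous in the uniform norm and $\Disc_F=\emptyset$; $(H_4)$ holds since each $f_\theta$ is $C^\infty$ with $Sf_\theta<0$, and $f_\theta'(x)=-2x$ is independent of $\theta$, hence trivially equicontinuous. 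Finally \eqref{eq:posLyap} gives $\liminf_n\frac1n\log|Df_\theta^n(x)|\ge\la>2(\la/3)$ a.e., so Corollary~\ref{mcor:finiteergodicmeasures} with $\la':=\la/3$ and $Z=\toro\times I_0$ yields finitely many ergodic absolutely continuous invariant measures whose basins cover a full $\nu\times\leb$-measure set. This is the paper's intended derivation; the conclusion is correct but your route is both different and incomplete.
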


The weak assumptions of the dynamics of the base map allows
us to state our results in the setting of random dynamical
systems; see Corollary \ref{cor:finitelySRBquadratic} in
Subsection \ref{sec:random-dynamic-resul} for details. The
assumption (\ref{eq:posiexponents}) is natural if we
consider random perturbations of certain non-uniformly
expanding maps which are stochastically stable. Namely, if
$f_{\bar\theta}$ is a non-uniformly expanding $C^2$ local
diffeomorphism of a compact manifold $Y$, $f_\theta$ is a
$C^2$ family of maps and $\alpha:\toro\to\toro$ is the left
shift map on the infinite product
$\toro=[\theta-\epsilon,\theta+\epsilon]^\N$ such that
$f_{\bar\theta}$ is stochastically stable, then the
skew-product map $\vfi(\theta,x)$ satisfies
(\ref{eq:posiexponents}); see \cite[Theorem B]{AA03} and
compare with Examples~\ref{ex:intermittent}
and~\ref{ex:higher-dim} in Section~\ref{sec:exampl} together
with the stochastically stable examples from
\cite{ArTah}. For other families of non-uniformly expanding
maps, even when stochastic stability is known, non-uniform
expansion for random orbits is an interesting open question;
see Section~\ref{sec:exampl}.  An open set of maps
satisfying (\ref{eq:posiexponents}) is provided by Viana in
\cite{Vi97}, see below. On the other hand, our work can be
useful not only for the study of small random perturbations
of a given dynamical system. In our results, the maps
$f_{\theta_0}, f_{\theta_1}, \dots$ are not given
necessarily by an i.i.d. process and they can be distant
from each other. Our results hold for general random
dynamical systems on the interval which are non-uniformly
expanding, see subsections \ref{sec:random-dynamic-resul}
and \ref{sec:one-dimens-fibers} for the precise setting of
the work.

%

This work can also be seen as a generalization
of the earlier work of Keller~\cite{Ke90} which proves that
\emph{for maps of the interval with finitely many critical
  points and non-positive Schwarzian derivative, existence
  of absolutely continuous invariant probability is
  guaranteed by positive Lyapunov exponents, i.e.,}
\begin{align*}
  \limsup_{n\to+\infty} \frac1n \log |Df^n (x)| > 0
  \quad\text{on a positive
  measure set of points $x$}.
\end{align*}

Related results were obtained by Alves, Bonatti and
Viana~\cite{ABV00}. They show that every non-uniformly expanding local
diffeomorphism away from a non-degenerate critical/singular
set, on any compact manifold, admits a finite number of
ergodic absolutely continuous invariant measures describing
the asymptotics of almost every point. The notion of
non-uniform expansion means that
\begin{align}\label{eq:NUEdef}
  \liminf_{n\to+\infty}\frac1n\sum_{j=0}^{n-1} \log\|
  Df(f^j(x))^{-1}\| <0 \quad \text{Lebesgue almost everywhere}.
\end{align}
Some control of recurrence to
this critical/singular set must be assumed to construct the
absolutely continuous invariant measures. This assumption is
usually rather difficult to verify.


The main known example of maps satisfying the conditions of
the result of Alves, Bonatti and Viana are the \emph{Viana
  maps}. These maps were introduced by Viana \cite{Vi97} and
studied by many authors,
e.g. \cite{Al00,AA03,alves-viana2002,buzzi-sester-tsujii,schnell08}
among others. The maps are skew-products $\vfi:\toro\times
\Y\to\toro\times \Y, (\theta,x)\mapsto
(\alpha(\theta),f(\theta,x))$, with $\al$ being a uniformly
expanding circle map and the maps on the fibers being
quadratic maps of the interval $f_\theta(x)=a(\te)-x^2$ for
$a(\te)=a_0+\beta\sin(2\pi\theta), \beta>0$ small and $a_0$
a Misiurewicz parameter for $f_{a_0}$. The central direction
along $\Y$ is dominated by the strong expansion of the base
dynamics along $\toro$.  For an open class of these maps,
Viana \cite{Vi97} proved the positiveness of the Lyapunov
exponents and Alves \cite{Al00} proved the existence of an
absolutely continuous invariant measure.


Extensions of the above mentioned results were obtained,
  among others, by Pinheiro~\cite{Pinheiro05}, and by one of
  the authors \cite{solano} but, in all cases, either
  non-uniform expansion (\ref{eq:NUEdef}) in all directions,
  or a weaker form of hyperbolicity (partial hyperbolicity)
  is demanded. The critical/singular set
is also assumed to be non-degenerate.
  In a remarkable work, Tsujii~\cite{Tsu05}
  proves results in this line for generic partially
  hyperbolic endomorphisms on compact surfaces.  
  
  On the other hand, for piecewise expanding maps in higher
  dimensions, the existence of absolutely continuous
  invariant measures was obtained by
  Adl-Zarabi~\cite{Adl96}, Buzzi~\cite{Bu00}, Gora-Boyarsky
  \cite{GorBoy89}, Keller \cite{Ke79} and, among other,
  Saussol \cite{Sa00}. Again the authors assume uniform
  expansion with strong expansion rates together with
  certain boundary conditions on the pieces of the domain
  where the transformation is not expanding.

  Our results demand no partial hyperbolicity or domination
  conditions and we put no restriction on the dynamics of
  the base of the skew-product, other than almost everywhere
  continuity and the existence of an invariant ergodic
  probability measure. We do not require non-uniform
  expansion (\ref{eq:NUEdef}) in all directions, nor the
  non-degenerate conditions of the critical set.  Along
  multidimensional fibers (i.e. the dimension of the space
  $\Y$), we do demand non-uniform expansion and a control of
  the recurrence to the singular/critical set. Along
  one-dimensional fibers (i.e., the case where $\Y$ is the
  interval) with $f_\theta$ having negative Schwarzian, we
  assume non-uniform expansion only: we do not assume slow
  recurrence. In particular, the base dynamics can have no
  absolutely continuous invariant measure with respect to
  some natural volume form, as we present in some
  examples. Under these mild conditions we prove the
  existence of at most denumerable many invariant
  probability measures absolutely continuous along the
  fibers. We get finitely many invariant
  probability measures, instead of denumarable many, if the 
  rate of non-uniform expansion is bounded away from zero. For
non-uniformly expanding random
  dynamical systems on the interval, we get finitely many 
  absolutely continuous measures defined on the interval, 
  describing the asymptotics of almost all random orbits.

\subsection{Statements of results}
\label{sec:statem-results}

For a topological space $X$ we denote by
$\mathcal{B}_{X}$ the Borel $\sigma$-algebra on $X$.
The main setting is the following: let $\toro$ and $\Y$ be a
separable metrizable and complete (i.e., Polish) topological spaces.
Let us consider the skew-product map
\begin{equation*}
\begin{array}{lccc}
\fhi:&\toro\times \Y&\longrightarrow& \toro\times \Y \\
&(\te,x)&\mapsto& (\al(\te), f(\te,x)).
\end{array}
\end{equation*}
We assume that $\vfi$ is at least measurable with respect to the
Borel $\sigma$-algebra $\mathcal{B}_{\toro}\times
\mathcal{B}_{\Y}$ (which equals $\mathcal{B}_{\toro\times\Y}$ since
both $\toro$ and $\Y$ are separable metric
spaces; see e.g.~\cite[Appendix M.10]{billingsley99}).


\subsubsection{One dimensional fibers}
\label{sec:one-dimens-fibers}
We consider $\Y=I_0$ a compact interval. For $\te\in\toro$,
$f_\te:I_0\ra I_0$, $x\ra f(\te,x)$ is an
interval map, possibly with critical points and
discontinuities. We denote by $\crit_\te$ and $\disc_\te$
the set of critical points and discontinuities,
respectively, of $f_\te$, for every $\te\in\toro$. {We also
  use the notations $\crit=\{ (\te,x)\in \toro\times I_0 ;
  x\in \crit_\te\}$ and $\disc=\{ (\te,x)\in \toro\times I_0
  ; x\in \disc_\te \}$}.

We assume throughout that
the discontinuities $\disc_\theta$ of the interval map
$f_\theta$ are in the interior of $I_0$, and
  that the lateral limits exist at each $x\in\disc_\theta$;
  see condition $(H_4^*)$ in what follows.

We assume also that
\begin{enumerate}
\item[($H_1$)] $p:=\sup \{\# (\crit_\te\cup\disc_\te),
  \te\in\toro \} <\infty$ and $\Gamma:=\sup \{ \partial_x
  f(\te,x) , (\te,x)\notin \disc_{\te} \} <\infty $. The set
  \begin{equation*}
 {\cc=\{ (\te,x)\in \toro\times I_0 ; x\in \crit_\te\cup\disc_\te \}}
\end{equation*}
is measurable (i.e. it belongs to $\mathcal{B}_{\toro}\times
\mathcal{B}_{I_0}$).
\item[$(H_2)$] $\al:\toro\ra\toro$ is a measurable map with
  an ergodic invariant probability measure $\nu$ such that
  $\nu(\Disc_\alpha)=0$, where
  $\Disc_\alpha$ is the set of discontinuity points of
  $\alpha$.
\end{enumerate}
The assumption on the discontinuity set is a natural
condition to study the $\vfi$-invariance of weak$^*$
accumulation points of dynamically defined probability measures.
Let us consider the map
\begin{equation*}
F:\toro\to B(I_0) \quad
\theta\mapsto f_\te:I_0\to I_0
\end{equation*}
where $B(I_0)$ is the family of measurable maps from $I_0$
to $I_0$ with the uniform norm:
\begin{align*}
  \|F(\tilde\theta)-F(\theta)\|=\sup_{x\in
I_0}|f_{\tilde\theta}(x)-f_\theta(x)|.
\end{align*}
{We write $\Disc_{F}$ for the set of discontinuities of the
map $F$}.  We further assume some regularity of the map $F$.
\begin{enumerate}
\item[$(H_3)$] $\nu(\Disc_{F})=0$. 
\end{enumerate}

We deal with two situations:
\begin{enumerate}
\item[$(H_4)$] the maps $f_\te$ are $C^3$, $Sf_\te\leq 0$, for
  every $\te\in\toro$ (here $Sf_\theta$ is the Schwarzian
  derivative of
  $f_\theta$) 
  and the derivatives of $\{f_{\te}\}_{\te\in\toro}$ are
  equicontinuous.\footnote{The equicontinuity can be replaced
    by the following condition: given $\epsilon>0$, there
    exists $\delta>0$ such that if $|x-\crit_\te|<\delta$
    then $|f'_{\te}(x)|<\epsilon$, for all
    $\te\in\toro$. This is used in the proof of
Theorem~\ref{PrincipalB}.}

\item[$(H^*_4)$] we have $\disc_\te\neq\emptyset$ for some
  $\theta\in\toro$. Writing $\disc_\te=\{q_1(\te)\leq\ldots
  \leq q_{d(\theta)}(\te)\}$ (this may be the empty set for
  some values of $\theta\in\toro$) for every $\te\in\toro$,
  we assume that $f_\theta$ is $C^3$ diffeomorphism and
  ${Sf_\te}\leq 0$ restricted to $(q_i(\te), q_{i+1}(\te))$ for
  all $i= 0,1,\ldots,d(\theta)$, where we set $q_0=\inf I_0$
  and $q_{d(\theta)+1}=\sup I_0$ to be the endpoints of
  $I_0$.

  Writing $\disc=\{(\theta,x): x\in\disc_\theta,
  \theta\in\toro\}$ we also assume that for every
  $\ell\in\Z^+$ there exists a neighborhood $V$ of $\overline{\disc}$
  such that
  \begin{align*}
    \vfi^{k}(V)\cap V=\emptyset
    \quad\text{for every}\quad k=1,\dots,\ell.
  \end{align*}
\end{enumerate}
We write, here and in the rest of the paper, $\overline C$
for the topological closure of a subset $C\subset
\toro\times I_0$.
 
This setting models similar maps as in
\cite{gouezel07,solano}, but without expansion assumptions
on the base, and we also admit discontinuities but with
strong non-recurrence assumptions. This non-recurrence
property can be deduced, as in
Example~\ref{ex:intermittent}, if every sequence $z_k$ in
$\toro\times I_0$ tending to $\disc$ is sent to a
sequence $\vfi(z_k)$ tending to a forward invariant subset
disjoint from $\overline{\disc}$; a sort of
Misiurewicz condition, but this time on the images of a
discontinuity set.

We say that $\fhi$ is \emph{non-uniformly expanding
  along the vertical direction according to
  $\nu\times\leb$}, if
\begin{equation}
  \label{eq:posiexponents}
\ilim \frac{1}{n} \log |Df_\te^n(x)|>2\lambda \quad
(\nu\times\leb)-\text{a.e. } (\te,x)
\end{equation}
for some $\lambda>0$, where $\leb$ denotes the normalized Lebesgue measure on
$I_0$ and we use the convention
\begin{equation*}
f_\te^k(x):=f_{\al^{k-1}(\te)}\circ \ldots f_{\al^{1}(\te)}\circ
f_{\te}(x)
\end{equation*}	
for every $\te\in \toro$, $x\in I_0$.  We say that $\fhi$ is \emph{non-uniformly expanding along the vertical
  direction according to $\nu\times\leb$, on the subset
$Z\subset\toro\times I_0$}, if \eqref{eq:posiexponents} holds for
$\nu\times\leb$-a.e. $(\te,x)\in Z$, for some $\lambda>0$.

We recall that for an ergodic $\vfi$-invariant probability
measure, its \emph{ergodic basin} is the set
\begin{align*}
  B(\mu)=\left\{\omega=(\theta,x)\in\toro\times\Y:
  \lim_{n\to+\infty}\frac1n\sum_{j=0}^{n-1}g(\fhi^j(\omega))=\int
  g\,d\mu
  \quad\text{for each}\quad g\in C^0(\toro\times\Y,\R)\right\}.
\end{align*}
Our main result in this setting is the following

\begin{maintheorem}
  \label{mcor:finiteergodicmeasures}
  Let $\fhi: \toro\times I_0\ra \toro\times I_0$ be a
  skew-product as above satisfying $(H_1)$, $(H_2)$, $(H_3)$
  and $(H_4)$ (or $(H^{*}_4)$).  Assume that $\fhi$ is
  non-uniformly expanding along the vertical direction
  according to $\nu\times\leb$, on the subset $Z\subset
  \toro\times I_0$. Then $\fhi$ admits finitely many ergodic
  invariant probability measures absolutely continuous with
  respect to $\nu\times \leb$, whose basins cover $Z$, up to
  a $\nu\times\leb$-zero measure set.
\end{maintheorem}  

Note that the existence of an invariant measure for the base
dynamics (see condition $(H_2)$) is not a restriction in the
theorem. Indeed, any $\fhi$-invariant measure absolutely
continuous (with respect to $\mu_{\toro}\times\leb$, where
$\mu_{\toro}$ is a measure on $\mathcal{B}_\toro$) induces
an $\al$-invariant measure which is absolutely continuous
(with respect to $\mu_{\toro}$).

In the case that the rate of expansion is not bounded away
from zero, we have a weaker result.

\begin{maintheorem}
  \label{mthm:PrincipalA}
  Let $\fhi: \toro\times I_0\ra \toro\times I_0$ be a
  skew-product as above satisfying $(H_1)$, $(H_2)$,
  $(H_3)$ and $(H_4)$ (or $(H^{*}_4)$). Assume that
  the limit in \eqref{eq:posiexponents} is greater than 0, for
  $\nu\times\leb$-a.e. $(\te,x)\in Z$.
Then $\fhi$ admits an at most denumerable family
  $\{\mu_i\}_{i\in I}$ of \emph{ergodic} invariant
  probability measures absolutely continuous with respect to
  $\nu\times \leb$. Moreover $\nu\times\leb$-a.e.
  $(\theta,x)\in Z$ belongs to the basin of some $\mu_i,i\ge1$.
\end{maintheorem}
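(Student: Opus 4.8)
The plan is to construct the absolutely continuous invariant measures as weak$^*$ accumulation points of Cesàro averages of pushforwards of Lebesgue measure along vertical fibers, following the classical Krylov--Bogolyubov strategy but carried out fiberwise. First I would fix a typical point $(\theta,x)\in Z$ where \eqref{eq:posiexponents} holds and, more importantly, fix a whole positive-Lebesgue-measure set of such points inside a single fiber $\{\theta\}\times I_0$; this is possible because, by Fubini, for $\nu$-a.e.\ $\theta$ the slice $Z_\theta=\{x: (\theta,x)\in Z\}$ has positive Lebesgue measure and \eqref{eq:posiexponents} holds a.e.\ on it. The negative-Schwarzian hypothesis $(H_4)$ (or $(H_4^*)$) is the crucial structural input here: it gives, via the Koebe distortion principle, uniform control on the distortion of iterates $f_\theta^n$ on intervals that avoid the critical/discontinuity set, so that the positive Lyapunov exponent translates into genuine expansion of definite-size intervals. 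From this one extracts, for a positive-measure set of $x$, a sequence of times $n_k$ and nested neighborhoods that are mapped diffeomorphically with bounded distortion onto intervals of definite size — the hyperbolic-times machinery adapted to the random/skew-product setting.

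Next I would define $\mu_n = \frac1n\sum_{j=0}^{n-1}\varphi^j_*(\leb_\theta)$, where $\leb_\theta$ is normalized Lebesgue measure restricted to the fiber (or to the good set $Z_\theta$), viewed as a measure on $\toro\times I_0$ sitting over the orbit of $\theta$. Compactness of the space of probability measures (using that $\toro\times I_0$ is Polish; one may need to first pass to a compact model or use tightness) yields a weak$^*$ limit $\mu$. The two things to check are that $\mu$ is $\varphi$-invariant and that it is absolutely continuous with respect to $\nu\times\leb$ in the fiber direction. Invariance is where hypotheses $(H_2)$, $(H_3)$ and the discontinuity conditions in $(H_4^*)$ enter: $\varphi$ is continuous off $\Disc_\alpha\cup\Disc_F\cup\disc$, and these sets are $\nu\times\leb$-null (for $\disc$ one uses the non-recurrence clause of $(H_4^*)$, which guarantees that mass does not accumulate on $\overline{\disc}$), so the usual Krylov--Bogolyubov argument that weak$^*$ limits of Cesàro averages are invariant goes through despite the discontinuities. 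Absolute continuity along fibers follows from the bounded-distortion/hyperbolic-times estimate: the pushforwards $\varphi^j_*\leb_\theta$ have fiberwise densities that are uniformly bounded on a definite-measure portion of the space, so the limit's fiber disintegration is absolutely continuous with bounded density on a positive-measure set, and then a standard argument (the set of points whose fiber density is bounded is forward invariant and has positive measure) upgrades this.

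Then I would pass from one absolutely continuous invariant measure to the ergodic decomposition. Any $\varphi$-invariant measure absolutely continuous w.r.t.\ $\nu\times\leb$ decomposes into ergodic components which, by the structure of the decomposition and the fact that each component is again absolutely continuous along fibers with density bounded below on its support, must have supports of definite fiber-measure; hence there can be at most countably many of them — this is the ``at most denumerable'' conclusion. To see that $\nu\times\leb$-a.e.\ point of $Z$ lies in the basin of one of the $\mu_i$, I would argue that the union of the basins is an invariant set, and that its complement in $Z$ must be $\nu\times\leb$-null: otherwise, running the construction above starting from Lebesgue measure restricted to that complement would produce yet another absolutely continuous invariant measure whose ergodic components are not among the $\mu_i$, a contradiction once one knows the $\mu_i$ exhaust the absolutely continuous invariant measures — or, more directly, a Lebesgue-density / hyperbolic-times argument shows that the orbit of a.e.\ point in $Z$ eventually enters and equidistributes in one of the ergodic pieces.

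The main obstacle I expect is establishing the absolute continuity of the limit measure in the presence of both critical points and discontinuities while only assuming the non-uniform expansion \eqref{eq:posiexponents} and \emph{not} assuming any slow-recurrence condition to $\crit$ (this is the novelty over Alves--Bonatti--Viana). Controlling the distortion of $f_\theta^n$ near the critical set without a recurrence hypothesis is exactly where the one-dimensionality and $Sf_\theta\le 0$ must be exploited to the hilt — through the Koebe principle and the fact that negative Schwarzian derivative is preserved under composition and forbids the kind of wild oscillation that would spoil the density bounds. Handling the discontinuities is comparatively easier because the strong non-recurrence clause in $(H_4^*)$ was built precisely to keep orbits (and hence the averaged measures) away from $\overline{\disc}$, but one must still verify carefully that this non-recurrence survives in the limit and interacts correctly with the fiberwise bounded-distortion estimates.
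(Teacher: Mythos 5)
Your overall strategy is recognizably the one the paper follows --- fiberwise Krylov--Bogolyubov, hyperbolic-like times with Koebe distortion, Lebesgue decomposition to extract the absolutely continuous invariant part, then a finiteness/countability argument. But two of your steps, as written, have real gaps.

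First, the construction of the averaged measures. You fix one $\theta$ and push $\leb$ on $\{\theta\}\times I_0$ forward, so $\mu_n=\frac1n\sum_{j<n}\varphi^j_*\leb_\theta$ is supported on the $n$ fibers $\{\alpha^j(\theta)\}\times I_0$. When $\nu$ is non-atomic (as in Remark~\ref{rem:non-atomic-singular-base}, where $\nu$ is even singular w.r.t.\ Lebesgue on the circle), every $\mu_n$ is then \emph{singular} with respect to $\nu\times\leb$, and the phrase ``the pushforwards have fiberwise densities that are uniformly bounded, so the limit's fiber disintegration is absolutely continuous'' does not by itself produce absolute continuity of the weak$^*$ limit: singular measures can converge to singular ones. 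To make your version work you would additionally need a Birkhoff/equidistribution argument showing that the base marginal of $\mu_n$ converges to $\nu$ \emph{and} that the bounded-density estimate survives the passage to the limit, which is not automatic. The paper sidesteps this entirely: assuming (or constructing via inverse limit) a bimeasurable base map, it defines $\eta_n(\theta)=\frac1n\sum_{j=1}^n(f^j_{\alpha^{-j}(\theta)})_*\leb$ on the fiber \emph{over $\theta$ itself} and integrates, $\eta_n=\int\eta_n(\theta)\,d\nu(\theta)$, so that (after restricting to hyperbolic-like times) $\mu_n\ll\nu\times\leb$ with a uniform density bound \emph{at every finite stage} (Lemmas~\ref{le:bdd-dens}--\ref{le:bdd-dist-limit}), and absolute continuity of the limit is then easy. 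This is the genuine technical difference, and it is the reason the paper introduces the inverse limit in Section~\ref{sec:Remove-H2}.

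Second, the ``at most denumerable'' step. Your claim that ergodic components of an acim have density bounded below on their support, and hence occupy a definite fraction of each fiber, is not justified by anything in the hypotheses --- there is no a priori lower bound on the density of an absolutely continuous invariant measure. The paper's mechanism is different: it stratifies $Z=\bigcup_n Z(1/n)$ by the rate of expansion, and proves (Proposition~\ref{pr:finitelymanyergodic}) that for each fixed $\lambda$ there is $b=b(\lambda)>0$ such that \emph{every} $\varphi$-invariant subset of $Z(\lambda)$ of positive measure has $\nu\times\leb$-measure at least $b$. The proof of that proposition is not a density estimate on the invariant measure; it combines ergodicity of $\nu$, the Pliss lemma applied to the hyperbolic-time frequency, the Koebe distortion bound, and a Lebesgue density-point argument in the fiber. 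From the mass lower bound one gets finitely many ergodic acims whose basins cover $Z(\lambda)$ up to a null set; taking the union over $\lambda=1/n$ gives the denumerable family and the basin-covering statement. You should replace your ``bounded below density $\Rightarrow$ countably many components'' argument with this $\lambda$-stratification plus the mass lower bound, which is where the real content of the finiteness/countability claim lives.
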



\subsubsection{Random dynamical systems interpretation}
\label{sec:random-dynamic-resul}
Let $(\toro, \mathcal{B}_{\toro},\nu)$ be a probability
space and let $\al$ be an $\nu$-preserving measurable map on
$\toro$.  A random dynamical system $f$ on the measurable
space $(\Y, \mathcal{B}_{\Y})$ over $(\toro,
\mathcal{B}_{\toro},\nu, \al)$ is generated by mappings
$f_\te$, $\te\in\toro$, so that (see \cite[Definition
1.1.1]{arnold-l-1998}):
\begin{enumerate}
\item the map $(\te,x)\to f_\te(x)$ is measurable, and
\item it satisfies the cocycle property
  $f_{\te}^{n+m}=f_{\al^{m}(\te)}^{n}\circ f_{\te}^{m}$ for
  all $ n,m\in\Z^+,\theta\in\toro$.
\end{enumerate}
The associated
random orbits are $x_0, x_1, \ldots$, where $x_0\in \Y$ and
$x_{n+1}=f_{\al^{n}(\te)}(x_n)$. This random dynamical
system (RDS for short) is denoted by $(\toro,
\mathcal{B}_{\toro},\nu, \al,f)$.

In general there is no common measure invariant for all the maps
$f_\te$, $\te\in \toro$. But one can ask whether there exists a
measure (or a finite number of measures) describing the asymptotics of
almost all random orbits, in the sense defined to follow.
Let us denote by $\delta_{x}$ the Dirac measure at $x$.

\begin{definition}
A probability measure $\mu$ on $\Y$ is
  \emph{SRB} for the RDS $(\toro,
  \mathcal{B}_{\toro},\nu, \al,f)$ if, for
  $\nu$-~almost every $\te\in\toro$, the set
  $RB_\te(\mu)$ of points $x\in \Y$ such that
\begin{equation*}
 \frac{1}{n}\sum_{k=0}^{n-1} \delta_{f_{\al^{k-1}(\te)}\circ\dots\circ
f_\te(x)}\longrightarrow \mu
\end{equation*}
has positive Lebesgue measure.
We call
$RB_\te(\mu)$ the random basin of $\mu$.
\end{definition}
One can associate to the random dynamical system $f$ the skew product
$\fhi:\toro\times \Y\circlearrowleft$, $(\te,x)\mapsto
(\al(\te), f_\te(x))$.
Note that, a $\fhi$-invariant measure $\mu$ with marginal
$\nu$, that is, such that $\mu(A\times I_0)=\nu(A)$ for
every $\nu$-measurable $A\subset\toro$, is an
\emph{invariant measure for the random dynamical system
  $(\toro, \mathcal{B}_{\toro},\nu, \al,f)$}; see
\cite[Definition 1.4.1]{arnold-l-1998}. All the
$\fhi$-invariant measures obtained in
Theorems~\ref{mcor:finiteergodicmeasures}, \ref{mthm:PrincipalA} and
\ref{mthm:PrincipalB} are of this type; see
Lemma~\ref{le:marginal-nu} in
Section~\ref{subs:basic-invariant-measures}.

We say that the random map 
$(\toro, \mathcal{B}_{\toro},\nu, \al,f)$ is a
\begin{itemize}
 \item \emph{random non-uniformly expanding map on $I_0$}
if $\toro$ is a Polish space, $\Y=I_0$ and the 
associated skew-product is non-uniformly expanding along the 
vertical direction according to $\nu\times\leb$.
\item \emph{admissible random non-uniformly expanding map on $I_0$}
if it is a random non-uniformly expanding map on $I_0$
and the associated 
skew-product satisfies $(H_1)$, $(H_2)$, $(H_3)$ and 
$(H_4)$ (or $(H^{*}_4)$).
\end{itemize}

We can state similar results to Theorems
\ref{mcor:finiteergodicmeasures} and \ref{mthm:PrincipalA} 
in the setting of
random non-uniformly expanding maps, since the
associated skew-product satisfies the conditions of
these results. Moreover, inspired by one result of
Buzzi \cite[Theorem 0.5]{Buzz00}, we can state the
following probabilistic consequence of our results.

\begin{maintheorem} \label{mthm:finitelymanySRB} Any
  admissible random non-uniformly expanding map on $I_0$
  admits a finite number of SRB measures. Moreover, the SRB
  measures are absolutely continuous and, $\nu$-almost
  surely, the union of their random basins has total
  Lebesgue measure.
\end{maintheorem}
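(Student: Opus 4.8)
The plan is to deduce Theorem~\ref{mthm:finitelymanySRB} from Theorem~\ref{mthm:PrincipalA} and Corollary~\ref{mcor:finiteergodicmeasures} applied to the skew-product $\fhi$ associated to an admissible random non-uniformly expanding map on $I_0$. First I would observe that, by the very definition of ``admissible'', the associated skew-product satisfies $(H_1),(H_2),(H_3)$ and $(H_4)$ (or $(H_4^*)$), and that the non-uniform expansion hypothesis with rate bounded away from zero by $2\la$ holds on all of $Z=\toro\times I_0$. Hence Corollary~\ref{mcor:finiteergodicmeasures} provides finitely many ergodic $\fhi$-invariant probability measures $\mu_1,\dots,\mu_N$, each absolutely continuous with respect to $\nu\times\leb$, whose ergodic basins cover $\toro\times I_0$ up to a $(\nu\times\leb)$-null set.

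The core of the argument is then a disintegration/Fubini step translating ``$\fhi$-invariant with marginal $\nu$ and basins covering a.e.\ point'' into ``SRB for the RDS''. By Lemma~\ref{le:marginal-nu} each $\mu_i$ has marginal $\nu$ on $\toro$, so it disintegrates as $d\mu_i(\te,x)=d\mu_{i,\te}(x)\,d\nu(\te)$ with $\mu_{i,\te}$ a probability on $I_0$ for $\nu$-a.e.\ $\te$; absolute continuity of $\mu_i$ with respect to $\nu\times\leb$ gives $\mu_{i,\te}\ll\leb$ for $\nu$-a.e.\ $\te$. I would then define the candidate SRB measure attached to $\mu_i$. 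The natural choice is to push the fibre disintegration forward along the dynamics: one checks that, since $\fhi$ preserves $\mu_i$ and $\al$ preserves $\nu$, the fibred densities satisfy $(f_\te)_*\mu_{i,\te}=\mu_{i,\al(\te)}$, so there is no single measure on $I_0$ unless one either (i) uses the marginal of $\mu_i$ on $\Y$, i.e.\ $\bar\mu_i:=\int\mu_{i,\te}\,d\nu(\te)$, which is absolutely continuous with respect to $\leb$, or (ii) works fibrewise. To match the definition of SRB I would take $\mu^{SRB}_i:=\bar\mu_i$ and verify the random-basin condition: for each $g\in C^0$, Birkhoff's theorem for $(\fhi,\mu_i)$ and ergodicity give $\frac1n\sum_{k=0}^{n-1}g(\fhi^k(\te,x))\to\int g\,d\mu_i$ for $(\nu\times\leb)$-a.e.\ $(\te,x)$; applying Fubini, for $\nu$-a.e.\ $\te$ this holds for $\leb$-a.e.\ $x\in I_0$. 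Evaluating on functions $g$ depending only on the $\Y$-coordinate (which separate measures on $\Y$) converts the skew-product ergodic averages into the random orbit averages $\frac1n\sum_{k=0}^{n-1}\delta_{f_{\al^{k-1}(\te)}\circ\cdots\circ f_\te(x)}$, whose limit is then the $\Y$-marginal of $\mu_i$, namely $\bar\mu_i=\mu^{SRB}_i$. Thus $RB_\te(\mu^{SRB}_i)$ has positive (indeed, together over $i$, full) Lebesgue measure for $\nu$-a.e.\ $\te$, so each $\mu^{SRB}_i$ is SRB, and the union $\bigcup_i RB_\te(\mu^{SRB}_i)$ has total Lebesgue measure $\nu$-almost surely.

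Two points need care. First, the distinct $\mu_i$ have distinct $\Y$-marginals: if two $\mu_i,\mu_j$ had the same $\Y$-marginal one could still have distinct SRB measures only formally, so I would either argue that the finitely many measures produced are already the distinct SRB measures (collapsing any coincidences), or simply state the conclusion as ``a finite number'' which is all that is claimed. Second, and this is the main obstacle, one must ensure that the full-measure set in Corollary~\ref{mcor:finiteergodicmeasures} (a subset of $\toro\times I_0$ of full $\nu\times\leb$ measure) has $\nu$-almost-every fibre of full $\leb$ measure in $I_0$ --- this is exactly Fubini's theorem, but one must be careful that the $C^0$-separating family used to identify the limit measure is countable (take a countable dense subset of $C^0(\Y,\R)$) so that the exceptional sets can be intersected over a countable index set before applying Fubini. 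Once these measure-theoretic bookkeeping steps are in place, the theorem follows directly; no new dynamical input beyond Theorem~\ref{mthm:PrincipalA}/Corollary~\ref{mcor:finiteergodicmeasures} and Lemma~\ref{le:marginal-nu} is required.
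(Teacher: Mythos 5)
Your proposal is correct and follows essentially the same route as the paper: apply Corollary~\ref{mcor:finiteergodicmeasures} to the associated skew-product, take the $I_0$-marginal of each ergodic $\vfi$-invariant measure $\mu_i$ as the candidate SRB measure, and deduce via a Fubini argument (using $\fhi$-invariance of the basin and $\al$-ergodicity of $\nu$) that the $\te$-sections $B_i(\te)$ of the ergodic basins lie in the random basin of the marginal and have positive --- jointly full --- Lebesgue measure for $\nu$-a.e.\ $\te$. The only blemish is the phrase that Birkhoff's theorem for $(\fhi,\mu_i)$ gives convergence to $\int g\,d\mu_i$ for $(\nu\times\leb)$-a.e.\ point: that convergence holds only on the ergodic basin $B(\mu_i)$, not on all of $\toro\times I_0$; restricting the statement to $(\te,x)\in B(\mu_i)$ makes it precise and does not affect the remainder of the argument.
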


We observe that if $\toro=\Sigma^\N$, where $\Sigma$ is an
at most countable set, then $\toro$ is totally
disconnected. In addition, setting
$f_\theta=f_{\pi(\theta)}$ where $\pi:\toro\to\Sigma^k$ is a
projection on the first $k$-symbols of $\theta\in\toro$, and
$\alpha$ the left shift of $\Sigma^\N$ we have both
$\Disc_{\alpha}=\emptyset$ and $\Disc_F=\emptyset$, since
$f_\theta$ depends only on finitely many coordinates of the
point $\theta\in\toro$ (the map $F:\toro\to B(I_0)$ is
locally constant).

Hence we obtain the following as a corollary of Theorem
\ref{mthm:finitelymanySRB}.
\begin{maincorollary} \label{cor:finitelySRBquadratic}
  Let $f_i:I_0\to I_0, i\in\Sigma$ be a countable family of
  maps of the quadratic family, that is,
  $f_i(x)=f_{\theta_i}(x)=\theta_i-x^2$ with
  $\theta_i\in[1,2]$. Let also $\toro=\Sigma^\N$ and
  $\alpha:\toro\circlearrowleft$ be the left shift with some
  ergodic $\alpha$-invariant probability measure $\nu$. 

  If $(\Sigma^\N, \mathcal{B}_{\Sigma^\N},\nu, \al,f)$ is a random 
non-uniformly expanding map on $I_0$, then it admits a finite 
number of SRB measures. The SRB 
measures are abolutely continuous and the union of their
random basins has total Lebesgue measure $\nu$-a.e.
\end{maincorollary}
Similar results holds for families of maps satisfying the
non-uniformly expanding conditions with higher-dimensional
fibers, as we state in the following
Section~\ref{sec:higher-dimens-fibers}.

\subsubsection{Higher-dimensional fibers}
\label{sec:higher-dimens-fibers}

Assuming a condition of slow recurrence to the set of
criticalities and/or discontinuities, which we assume are of
a certain non-degenerate type, we can take advantage of the
method of proof of Theorems~\ref{mcor:finiteergodicmeasures},~\ref{mthm:PrincipalA}
and~\ref{mthm:finitelymanySRB} to obtain the same conclusion
in a setting where the fibers can be higher dimensional
manifolds.

Let us assume that $\vfi:\toro\times\Y\to\toro\times\Y$ has
the same skew-product form as before, but now:
\begin{enumerate}
\item[$(H_5)$] $f:\toro\times\Y\to\Y$ is a Borel measurable
  map such that $f_\theta:\{\theta\}\times\Y\to\Y$ is
  $C^{1+\alpha}$ away from a set of non-degenerate
  discontinuities $\disc_\theta$ and/or criticalities
  $\crit_\theta$ in the compact finite $d$-dimensional
  manifold $\Y$.
\end{enumerate}
We fix a Riemannian metric on
$\Y$, the corresponding distance function $\dist$ and norm
$\|\cdot\|$ to be used in what follows. We also fix a
normalized volume form $\Leb$ (Lebesgue measure) on
$\Y$. The next regularity conditions on the derivatives will
be needed.
\begin{enumerate}
\item[$(H_6)$] $f':\toro\times\Y\to\cL(\R^d,\R^d),
  (\theta,x)\mapsto Df_\theta(x)$ and
  $f'_1:\toro\times\Y\to\cL(\R^d,\R^d), (\theta,x)\mapsto
  Df_\theta(x)^{-1}$ are Borel measurable maps with respect
  to the Borel $\sigma$-algebras of $\toro\times\Y$ and
  $\cL(\R^d,\R^d)$. In this last space we consider
  the topology induced by the usual operator
  norm $\|L\|_{\theta,x}:=\sup\{\|L(v)\|_{f_\theta(x)}/\|v\|_x:
  \vec0\neq v\in T_x\Y\}$ for a linear map $L:T_x\Y\to
  T_{f_\theta(x)}\Y, (\theta,x)\in\toro\times\Y$.
\end{enumerate}
We also assume conditions $(H_1)$ and $(H_2)$ (or $(H_2^*)$)
and $(H_3)$ on $\cc$, $\Disc_\alpha$ and $\Disc_F$ as
before replacing $I_0$ by $\Y$ throughout. 

The non-degenerate assumption on the sets {$\crit_\theta$ and
$\disc_\theta$} mean that $f_\theta$ behaves like a power of
the distance near the set of
criticalities/discontinuities. More precisely: there are
constants $B>1$ and $\be>0$ for which, {writing
$\cc_\theta$ for $\cc\cap(\{\theta\}\times\Y)$}
\begin{itemize}
 \item[(S1)]
\hspace{.1cm}$\displaystyle{\frac{1}{B}
\dist(x,\cc_\theta)^{\be}
\leq
\frac{\|Df_\theta(x)v\|}{\|v\|}
\leq 
B\dist(x,\cc_\theta)^{-\be}}$;
 \item[(S2)]
\hspace{.1cm}$\displaystyle{\left|\log\|Df_\theta(x)^{-1}\|-
\log\|Df_\theta(y)^{-1}\|\:\right|\leq
B\frac{\dist(x,y)}{\dist(x,\cc_\theta)^{\be}}}$;
 \item[(S3)]
\hspace{.1cm}$\displaystyle{\left|\log|\det Df_\theta(x)^{-1}|-
\log|\det Df_\theta(y)^{-1}|\:\right|\leq
B\frac{\dist(x,y)}{\dist(x,\cc_\theta)^{\be}}}$;
 \end{itemize}
 for every $\theta\in\toro$ and $x,y\in
 \Y\setminus(\cc_\theta)$ with
 $\dist(x,y)<\dist(x,\cc_\theta)/2$ and $v\in T_x\Y$.

Given $\delta>0$ we define the $\delta$-{\em truncated
  distance} from $x\in \Y$ to $\cc_\theta$
 $$ \dist_\delta(x,\cc_\theta)= \left\{
\begin{array}{ll} 
1 & \mbox{if }\dist(x,\cc_\theta)\geq \delta,\\
\dist (x,\cc_\theta) & \mbox{otherwise.} \end{array} \right. $$

We say that $\vfi$ is \emph{non-uniformly expanding along
  the fibers according to $\nu\times\Leb$, on
  $Z\subset\toro\times\Y$,} if
\begin{itemize}
\item $\vfi$ has \emph{non-uniform expansion along the
    vertical direction according to $\nu\times\Leb$ on $Z$}:
  for some $\lambda>0$,
\begin{align}\label{eq:pos-lyap-vert}
    \limsup_{n\to+\infty}\frac1n\sum_{j=0}^{n-1}
    \log\|Df_{\alpha^j(\theta)}(f_\theta^j(x))^{-1}\|<- 2\lambda,\quad
    \nu\times\Leb-\text{a.e  } (\theta,x)\in Z;
  \end{align}
\item $\vfi$ has \emph{slow recurrence to the set of
    criticalities and discontinuities} on the orbit of
  points of $Z$: for each
  $\epsilon>0$ there exists $\delta>0$ such that
  \begin{align}\label{eq:slow-rec}
    \limsup_{n\to+\infty}\frac1n\sum_{j=0}^{n-1} -\log
    \dist_\delta\big(f^j_{\theta}(x),\cc_{\alpha^j(\theta)}\big)
    <\epsilon, \quad
    \nu\times\Leb-\text{a.e  } (\theta,x)\in Z
  \end{align}
  (the reader can recall the definition of $\cc$ in the
  statement of condition $(H_1)$).
\end{itemize}

Our result in this setting reads as follows.

\begin{maintheorem}
  \label{mthm:PrincipalB}
  Let $\fhi: \toro\times \Y\ra \toro\times \Y$ be a
  skew-product as above satisfying $(H_1)$, $(H_2)$ $(H_3)$,
  $(H_5)$ and $(H_6)$.  Assume that $\fhi$ non-uniformly
  expanding along the fibers according to $\nu\times\Leb$, on the subset
  $Z\subset\toro\times\Y$.  Then we obtain the same
  conclusions as in Theorem~\ref{mcor:finiteergodicmeasures}.
\end{maintheorem}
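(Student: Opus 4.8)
The plan is to follow the strategy used for Theorem~\ref{mthm:PrincipalA}, replacing the negative-Schwarzian one-dimensional estimates by the \emph{hyperbolic times} technology of Alves--Bonatti--Viana adapted to skew-products. Fix $\epsilon>0$ small, let $\delta>0$ be given by the slow-recurrence hypothesis~\eqref{eq:slow-rec}, and fix $\sigma>0$; let $H_\sigma$ be the set of $(\theta,x)$ at which the limit in~\eqref{eq:pos-lyap-vert} is $\le-2\sigma$, so that $m(H_\sigma)\to 1$ as $\sigma\downarrow0$, where $m=\nu\times\Leb$. A Pliss-type lemma then shows that $m$-a.e.\ $(\theta,x)\in H_\sigma$ has a positive lower density set of integers $n$ --- \emph{$(\sigma,\delta)$-hyperbolic times} for $(\theta,x)$ --- such that for all $1\le k\le n$,
\[
\prod_{i=n-k}^{n-1}\big\|Df_{\alpha^i(\theta)}(f^i_\theta(x))^{-1}\big\|\le e^{-\sigma k}
\qquad\text{and}\qquad
\dist_\delta\big(f^{n-k}_\theta(x),\cc_{\alpha^{n-k}(\theta)}\big)\ge e^{-\epsilon k}.
\]
The non-degeneracy conditions (S1)--(S3) then give, as in the deterministic case, uniform backward contraction of $f^n_\theta$ along the orbit and a \emph{bounded distortion} estimate: there are $\rho,C_1>0$ depending on $\sigma,\delta$ but not on $(\theta,x)$ such that whenever $n$ is a hyperbolic time for $(\theta,x)$ there is a \emph{hyperbolic pre-ball} $V_n\ni x$ in the fiber with $f^n_\theta\colon V_n\to B(f^n_\theta(x),\rho)$ a diffeomorphism and $|\det Df^n_\theta(y)|/|\det Df^n_\theta(z)|\le C_1$ for all $y,z\in V_n$. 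Conditions $(H_5)$ and $(H_6)$ guarantee that the derivative cocycle is measurable and $C^{1+\alpha}$ along fibers, so these estimates hold fibrewise and depend measurably on $\theta$.

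Next I would build the invariant measures. Form the Cesàro averages $\mu_n=\frac1n\sum_{j=0}^{n-1}\fhi^j_*\big(m|_{H_\sigma}\big)$ and let $\mu$ be a weak$^*$ accumulation point. Restricting each pushforward to the (positive density) hyperbolic times and using bounded distortion on the pre-balls $V_n$, one shows exactly as in the proof of Theorem~\ref{mthm:PrincipalA} that $\mu$ has a component that is absolutely continuous with respect to $m$, with density bounded below on hyperbolic pre-balls of fixed size; discarding the singular part and normalising, one obtains a $\fhi$-invariant probability $\mu\ll m$. Its $\fhi$-invariance uses $(H_1)$ together with $\nu(\Disc_\alpha)=0$ and $\nu(\Disc_F)=0$ from $(H_2)$ and $(H_3)$, which make $\fhi$ continuous $m$-almost everywhere and the map $\mu'\mapsto\fhi_*\mu'$ continuous at $\mu$, so that $\fhi_*\mu=\lim\fhi_*\mu_n=\mu$; by Lemma~\ref{le:marginal-nu} the measure $\mu$ has marginal $\nu$.

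Now I would pass to ergodic components and count them. By the ergodic decomposition, $\mu$ disintegrates into ergodic $\fhi$-invariant measures which are themselves $\ll m$, and the uniform lower bound for the density on hyperbolic pre-balls of fixed size forces distinct ergodic a.c.\ components to have fibrewise supports of Lebesgue measure bounded below by a constant $c(\sigma)>0$; hence, for fixed $\sigma$, only finitely many ergodic a.c.\ invariant measures $\mu_1^\sigma,\dots,\mu_{N(\sigma)}^\sigma$ are ``seen'' by $H_\sigma$. Letting $\sigma\downarrow0$ along a sequence and collecting these gives the at most denumerable family $\{\mu_i\}_{i\in I}$; when the limit in~\eqref{eq:pos-lyap-vert} is bounded away from $0$ a single $\sigma$ suffices and the family is finite, recovering Corollary~\ref{mcor:finiteergodicmeasures} in this setting. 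Finally, a Lebesgue-density argument along hyperbolic times combined with Birkhoff's ergodic theorem shows that $m$-a.e.\ $(\theta,x)\in H_\sigma$ lies in the ergodic basin of one of $\mu_1^\sigma,\dots,\mu_{N(\sigma)}^\sigma$; since $\bigcup_{\sigma>0}H_\sigma$ has full $m$-measure, $m$-a.e.\ point lies in the basin of some $\mu_i$. This is the full conclusion of Theorem~\ref{mthm:PrincipalA} (here the subset $Z$ of that statement is all of $\toro\times\Y$).

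The step I expect to be the main obstacle is the passage from fibrewise to genuine $\fhi$-invariance, i.e.\ the construction of the a.c.\ invariant measure in a skew-product whose base dynamics is only assumed measurable. Bounded distortion controls the conditional densities along fibers, but one must rule out mass escaping near $\overline{\cc}$ when taking weak$^*$ limits, and must check that $\fhi_*$ commutes with those limits despite the discontinuities of $f_\theta$. The escape of mass is controlled by the slow recurrence~\eqref{eq:slow-rec} together with (S1), which turn the possible singularities of the densities into $\log$-type, hence $m$-integrable, ones; the commutation with $\fhi_*$ is precisely where $\nu(\Disc_\alpha)=\nu(\Disc_F)=0$ and the fact that $\cc$ is $m$-null --- so that $\overline{\cc}$ can be enlarged to a thin open neighbourhood and discarded --- enter, exactly as in the corresponding step of Theorem~\ref{mthm:PrincipalA}. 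The remaining ingredients (the Pliss lemma, the distortion estimates built from (S1)--(S3), and the counting of ergodic components) are routine adaptations of \cite{Al00} and \cite{alves-viana2002} and of the one-dimensional arguments already used for Theorem~\ref{mthm:PrincipalA}.
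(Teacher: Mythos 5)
Your proposal follows essentially the same route as the paper's Section~\ref{sec:higher-dimens-fibers-0}: the hyperbolic-times machinery of Alves--Bonatti--Viana (Pliss lemma, uniform backward contraction, pre-balls with bounded distortion via (S1)--(S3)), Cesàro-type measures along fibers, weak$^*$ limits, Lebesgue decomposition to extract an a.c.\ component, and the uniform lower bound on $\nu\times\Leb$-measure of invariant sets to count ergodic pieces. The paper does the same, and in particular your identification of where $(H_2)$, $(H_3)$ and slow recurrence enter (to show the weak$^*$ limit gives zero mass to the discontinuity set $\Disc\supset\overline{\cc}\cup(\Disc_\alpha\times\Y)\cup(\Disc_F\times\Y)$, so that $\fhi_*$ commutes with the limit) matches Lemma~\ref{le:null-discont}, Case~3. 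Two small imprecisions worth flagging, neither fatal: (i) $m$-a.e.\ continuity of $\fhi$ does not by itself make $\mu'\mapsto\fhi_*\mu'$ continuous at the weak$^*$ limit $\mu$; what is actually needed, and what the paper proves, is $\mu(\Disc)=0$, which is \emph{not} automatic from $m(\Disc)=0$ and is exactly where the slow-recurrence estimate does the work; (ii) "discarding the singular part" tacitly uses that both parts of the Lebesgue decomposition of an invariant measure under a non-singular map are themselves invariant (the paper's Lemma~\ref{le:inv-Leb-decomp}); this should be cited or proved. With these two points made precise, your outline agrees with the paper's proof.
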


In this setting, we also have an analogue of Theorem
\ref{mthm:PrincipalA}: if the limit in
(\ref{eq:pos-lyap-vert}) is smaller than zero, then $\fhi$
admits an at most denumerable family $\{\mu_i\}_{i\in I}$ of
ergodic invariant probability measures absolutely continuous
with respect to $\nu\times \Leb$, whose basins cover
$Z$. The proof is identical to the deduction of the
statement of Theorem \ref{mthm:PrincipalA} from that of
Theorem~\ref{mcor:finiteergodicmeasures}.


\subsection{Strategy of the proof and
organization of the text}
\label{sec:organiz-text}

The basic idea is to define measures on the vertical
foliation of the skew-product, depending on the starting
vertical leaf $\{\theta\}\times I_0$ or
$\{\theta\}\times\Y$; show that these measures depend
measurably on $\theta\in\toro$ and can be integrated with
respect to $\nu$; and then show that weak$^*$ accumulation
points of these integrated measures are $\vfi$-invariant.

The assumption of non-uniform expansion along the
vertical direction, or along the fibers, enables us to control the
densities of these measures along the vertical direction on
a certain subset of points which has ``positive mass at
infinity''. This provides us with an absolutely continuous
component for every weak$^*$ accumulation point obtained
before. Finally, using the uniqueness of Lebesgue
decomposition and the smoothness assumption on $f_\theta$
allows us to obtain an invariant probability measure $\mu$
for the skew-product $\vfi$ which is absolutely continuous
with respect to the product measure $\nu\times\leb$ of the
invariant measure on the base and Lebesgue measure on the
interval. In the case of Theorem \ref{mthm:PrincipalB},
the absolute continuity is respect to $\nu\times\Leb$, where $\Leb$ 
is the Lebesgue measure on $\Y$.
The ergodicity is obtained as a consequence of
the fact that the invariant sets, with positive
$\nu\times\leb$-measure, have $\nu\times\leb$-measure
bounded away from zero.

In the next Section~\ref{sec:exampl} we present some
examples of application our main results. In
Section~\ref{subs:basic-invariant-measures} we
construct the basic measures we will use to obtain the
invariant probability measures for $\vfi$. In
Section~\ref{sec:acim} we construct an absolutely
continuous invariant probability measure for $\vfi$. In
Section~\ref{sec:finitely-many-ergodi}, we prove that
the invariant sets with positive measure have
measure uniformly bounded away from zero. As consequence of this
result, we conclude the existence of ergodic absolutely
continuous invariant probabilities. From these
arguments it also follows the conclusion of Theorems
\ref{mcor:finiteergodicmeasures} and
\ref{mthm:PrincipalA}. In Section
\ref{sec:SRBrandommap} we prove Theorem
\ref{mthm:finitelymanySRB}, about existence of finitely
many SRB probability measures for random non-uniformly
expanding maps.

In Sections \ref{subs:basic-invariant-measures} and
\ref{sec:acim} we assume that the base dynamics
$\alpha:\toro\circlearrowleft$ is a bimeasurable bijection.
We explain how to replace this condition by $(H_2)$ in
Section~\ref{sec:Remove-H2}. Finally, in
Section~\ref{sec:higher-dimens-fibers-0} we outline the
arguments proving the main theorems in the setting with
higher-dimensional fibers; and in Appendix~\ref{sec:measur}
we prove the measurability of the sets used in the
construction of the measures in the previous sections.

\subsection*{Acknowledgments}
The authors thank Universidade Federal do Rio de
Janeiro (UFRJ) and IMPA, at Rio de Janeiro, Brasil, and
also Pontificia Universidade Catolica de Valparaiso
(PUCV), at Valparaiso, Chile, where part of this work
was developed, for their hospitality.

We thank the anonymous referee for the detailed suggestions
that helped improved the presentation and the readability of
the text.

\section{Some examples and open problems}
\label{sec:exampl}

As mentioned in Section~\ref{sec:random-dynamic-resul},
every skew-product map
$\vfi(\theta,x)=(\alpha(\theta),f_\theta(x))$ on
$\toro\times\Y$ presented below can be seen as a RDS
$(\toro, \mathcal{B}_{\toro},\nu, \al,f)$ in a standard way;
see \cite[Definition 1.1.1]{arnold-l-1998}.

\begin{example}\label{ex:vianamaps}
  Skew-products of quadratic maps have been extensively studied. In
\cite{Vi97,buzzi-sester-tsujii} is proved \eqref{eq:posiexponents},
with $\nu$ being Lebesgue measure on $\circulo^1$, for the maps
\begin{align*}
    F: \circulo^1\times \R \to \circulo^1\times\R,
    (\theta,x)\mapsto (k\cdot\theta,a_0-x^2+ a\sin(2\pi\theta))
  \end{align*}
where $k\in\Z^+\setminus\{1\}$ and $a_0\in(1,2]$ is such that $0$ is
preperiodic for the map $f_{a_0}(x)=a_0-x^2$. In \cite{schnell08} the
same map $F$ as above was studied
  but with $k$ a real parameter in the interval
  $(R_0,+\infty)$, where $1<R_0<2$ was shown to exist so
  that, the map $F$ with $k>R_0$
  satisfies~\eqref{eq:posiexponents}.
 
 In~\cite{schnell09} were considered skew-products
$ G(\theta,x)=(f_{a_1}^k(\theta),f_{a_0}(x)+\alpha s(\theta))$,
 where $f_a(x):=a-x^2$ and $a_0,a_1$ are parameters in the
  interval $(1,2]$ such that the critical point is
  pre-periodic but not periodic, and $s:\circulo^1\to[-1,1]$
  is a piecewise $C^1$ map.  It was proved that there exist
  $k_0\in\Z^+$ and a $C^1$ map $s$ such
  that, for every small enough $\alpha>0$ and all integers $k\ge
  k_0$, the map $G$ satisfies~\eqref{eq:posiexponents}, with
  $\toro=[f_{a_1}^2(0),f_{a_1}(0)]$ and $\nu$ being Lebesgue
  measure on the invariant interval $\toro$. 
  
  Note that the base transformation for the maps in
\cite{Vi97,buzzi-sester-tsujii,schnell08} is (piecewise) expanding.
For the maps in \cite{schnell09}, it is non-uniformly expanding with
critical points.
 
The existence of absolutely continuous invariant
probability measures for all these maps is an immediate
consequence of Theorem~\ref{mcor:finiteergodicmeasures}, with
$\toro=\circulo^1$ and $\vfi=F$ or $\vfi=G$.

Let us mention that the construction of the absolutely
continuous invariant probability was obtained
in~\cite{Al00} for the maps considered on
\cite{Vi97,buzzi-sester-tsujii}. In \cite{schnell08}
this conclusion was only achieved for a full Lebesgue
measure subset of $(R_0,+\infty)$. The author
in~\cite{schnell09} did not obtain absolutely
continuous invariant measures. Recently, in
\cite{AlSchn} was obtained the result for all the maps
in \cite{schnell08,schnell09}, as a byproduct of the
application of inducing to study decay of correlations
for the unique absolutely continuous invariant
probability measure.

\end{example}
\begin{example}
  \label{ex:intermittent}
  We can produce examples where the base dynamics is
  essentially arbitrary. Let $\toro$ be the circle
  $\circulo^1$ and $\alpha:\circulo^1\to\circulo^1$ a measurable map
  preserving an ergodic probability measure $\nu$. Let
  $\theta\mapsto f_\theta$ be a continuous family of maps of
  the interval $I_0=[0,1]$ such that
  \begin{itemize}
  \item for all $\theta\in\circulo^1$ the map $f_\theta:I_0\to
    I_0$ is $2$-to-$1$, with two branches
    $f_\theta\mid[0,1/2]: [0,1/2]\to[0,1]$ and
    $f_\theta\mid[1/2,1]: [1/2,1]\to[0,1]$ both increasing
    diffeomorphisms;
  \item on an arc $A$ of $\circulo^1$ with $\nu(A)\ge 1-
    \epsilon$ for some small $\epsilon>0$ we have
    \begin{itemize}
    \item for $\theta\in A$ the map $f_\theta$ is expanding:
      there exists $\sigma>1$ such that
      $|Df_\theta(x)|\ge\sigma$ for all $x\in I_0$;
    \item for $\theta\in\circulo^1\setminus A$ the map
      $f_\theta$ does not contract too much: there exists
      $\delta>0$ small such that $|Df_\theta(x)|\ge
      1-\delta$ for all $x\in I_0$.
    \end{itemize}
  \end{itemize}
  In this setting we have that for
  $(\nu\times\leb)$-a.e. $(\theta,x)$, applying the
  Ergodic Theorem to the sequence $(\alpha^j(\theta))_{j\ge0}$
  \begin{align*}
    \liminf_{n\to+\infty}\frac1n\sum_{j=0}^{n-1}
    \log|Df_{\alpha^j(\theta)}(f_\theta^j(x))|
    &\ge
    \nu(A)\log\sigma + \nu(\circulo^1\setminus
    A)\log(1-\delta)
    \\
    &\ge
    (1-\epsilon)\log\sigma -\delta\epsilon >0,
  \end{align*}
  where $\leb$ is the Lebesgue measure on $I_0$.  

  For a concrete expression we may take (see Figure~\ref{fig1})
  \begin{equation}
  \label{eq:intermittentfamily}
  f_t(x)=\left\{
\begin{array}{ll}
tx + 2^{\beta}(2-t) x^{1+\beta} \qquad &\text{if  }  x \in
[0,\frac{1}{2})\\
1-t(1-x) - 2^{\beta}(2-t) (1-x)^{1 + \beta}
&  \text{if  } x \in [\frac{1}{2},1]
\end{array} \right.
\end{equation}
with $\beta\in(0,1)$ and $t\in(1/2,3/2)$. We then take a
function $t:\circulo^1\to(1/2,3/2)$ such that, for some
small $a>0$, satisfies $t(A)\subset (1+a,3/2)$ and
$t(\circulo^1\setminus A)\subset (1-a,1+a]$. Finally we
define $\vfi(\theta,x)=(\alpha(\theta),f_{t(\theta)}(x))$.

We remark that $\disc=\circulo^1\times \{1/2\}$ is such that
every sequence $z_k$ converging to $\disc$ on
$\circulo^1\times I_0$ is sent to a sequence $\vfi(z_k)$
whose accumulation points are contained in
$\circulo^1\times\{0,1\}$, which is a forward invariant
subset of $\vfi$. This implies the strong non-recurrence
condition in $(H_4^*)$.

\begin{figure}[htbp]
 \centering
 \includegraphics[width=12cm, height=6cm]{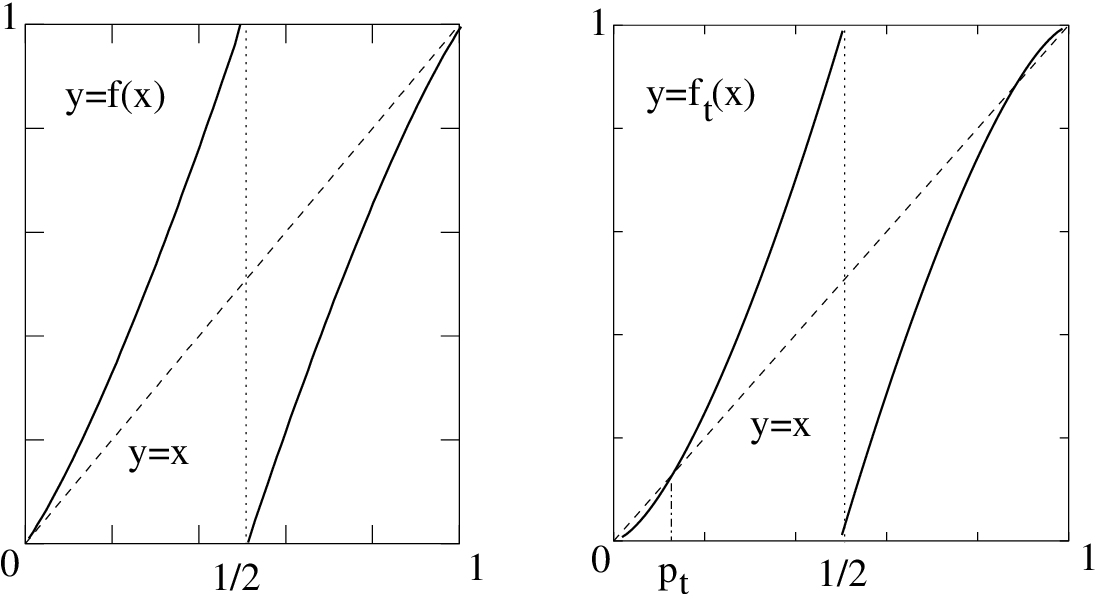}
 \caption{The map $f_1$ (left) and the map $f_t$ for $t<1$ (right).}
 \label{fig1}
\end{figure}

From Theorem~\ref{mthm:PrincipalA} we have that $\vfi$
admits an invariant probability measure $\mu$
absolutely continuous with respect to $\nu\times\leb$.
\end{example}

%
%

\begin{remark}\label{rem:non-atomic-singular-base}
We can construct this example with $\alpha$ a circle
diffeomorphism with irrational rotation number and $\nu$ an
ergodic $\alpha$-invariant probability which is non-atomic
and singular with respect to $\leb$; see e.g.
\cite[Theorem 12.5.1]{KH95}. We note that in this way we
have a \emph{base map $\alpha$ with no average expansion}.
\end{remark}

%

\begin{example}
  \label{ex:higher-dim}
  We can adapt the construction in
  Example~\ref{ex:intermittent} with fibers of arbitrary
  dimension. We fix $k>1$ in what follows.

  Let again $\toro$ be the circle
  $\circulo^1$ and $\alpha:\circulo^1\to\circulo^1$ a measurable map
  preserving an ergodic probability measure $\nu$. Let now
  $\theta\mapsto f_\theta$ be a continuous family of maps of
  the $k$-torus $\T^k$ such that, as before,
  \begin{itemize}
  \item on an arc $A$ of $\circulo^1$ with
    $\nu(A)\ge1-\epsilon$ for some small $\epsilon>0$ and
    some Riemannian norm $\|\cdot\|$ on $\T^k$ we have:
    \begin{itemize}
    \item for $\theta\in A$ the map $f_\theta$ is expanding:
      there exists $\sigma>1$ such that
      $\|Df_\theta(x)^{-1}\|\le1/\sigma$ for all $x\in\T^k$;
    \item for $\theta\in\circulo^1\setminus A$ the map
      $f_\theta$ does not contract too much: there exists
      $\delta>0$ small such that
      $\|Df_\theta(x)^{-1}\|\le1+\delta$ for all $x\in\T^k$.
    \end{itemize}
  \end{itemize}
  As before, in this setting, we have for
  $(\nu\times\Leb)$-a.e. $(\theta,x)$ that, applying the
  Ergodic Theorem to the sequence $(\alpha^j(\theta))_{j\ge0}$
  \begin{align*}
    \limsup_{n\to+\infty}\frac1n\sum_{j=0}^{n-1}
    \log\|Df_{\alpha^j(\theta)}(f_\theta^j(x))^{-1}\|
    &\le
    \nu(A)\log\sigma + \nu(\circulo^1\setminus
    A)\log(1+\delta)
    \\
    &\le
    (1-\epsilon)\log\sigma +\delta\epsilon <0,
  \end{align*}
  where $\Leb$ is the some volume from (Lebesgue measure)
  on $\T^k$. Since there are no criticalities or
  discontinuities, this shows that
  $\vfi(\theta,x)=(\alpha(\theta),f(\theta,x))$ is a
  non-uniformly expanding map along the fibers and we may
  apply Theorem~\ref{mthm:PrincipalB} to conclude the
  existence of a probability measure $\mu$ absolutely
  continuous with respect to $\nu\times\Leb$.
\end{example}

\begin{example}
  \label{ex:discontinuous}
  Now we adapt the previous Example~\ref{ex:higher-dim} to
  have a discontinuous family of fiber maps. We repeat the
  construction, keeping the choice of $f_\theta$ for
  $\theta\in A$ but replacing $f_\theta$ by the identity map
  on the torus for $\theta\in\circulo^1\setminus A$.

  We still have non-uniform expansion and we note that the
  discontinuities of the map $F$ are on the boundary
  $\partial A$ of the arc $A$ of the circle, which is formed
  by a two points on the circle. Hence condition $(H_3)$ is
  satisfied. We apply Theorem~\ref{mthm:PrincipalB} to
  obtain a $\vfi$-invariant probability $\eta$ absolutely
  continuous with respect to $\nu\times\Leb$.
\end{example}

\begin{example}
  \label{ex:infinite-srb}
  We present an example of a $C^\infty$ map $T$ away from a
  denumerable singular set, which is non-uniformly
  expanding and has infinitely many ergodic absolutely
  continuous invariant probability measures. 

  On the one hand, considering $\alpha=T$ as the base map
  and a constant fiber map $f(x)=4x(1-x)$ of the interval
  which has positive Lyapunov exponents for Lebesgue almost
  all point, a unique critical point and negative Schwarzian
  derivative, we obtain a direct product $\vfi=\alpha\times
  f$. The map $f$ admits a unique ergodic absolutely continuous
  invariant probability measure $\mu$.  Thus we can apply
  our arguments to each ergodic absolutely continuous
  invariant probability measure $\nu_k$ for $\alpha$ to
  obtain $\nu_k\times\mu$ as an ergodic absolutely
  continuous invariant probability measure for $\vfi$. In
  this way $\vfi$ has a countable set of distinct absolutely
  continuous invariant probability measures.

  On the other hand, considering the direct product
  $\vfi=\alpha\times T$ of any map $\alpha$ of a metric
  space with an ergodic probability measure $\nu$, with $T$
  on the fibers, we obtain an example with infinitely many
  ergodic absolutely continuous invariant measures
  $\nu\times\nu_k$ with the same marginal $\nu$.

  The map $T$ is easily described as the standard doubling
  map
  \begin{align*}
    f: x \in[0,1] \mapsto
    \begin{cases}
      2x & \text{if  } 0\le x <1/2
      \\
      2x-1 & \text{if  } 1/2\le x \le 1
    \end{cases}
  \end{align*}
  conveniently rescaled on the unit interval infinitely many
  times, as follows, see figure~\ref{fig2}:
  \begin{align*}
    T(x):=\sum_{n\ge1}
    \begin{cases}
        \frac1{2^n}+
          \frac1{2^n}f\big(2^n(x-2^{-n})\big)
        & \text{if  }
        x\in\Big]\frac1{2^n},\frac1{2^{n-1}}\Big]
        \\
        0 & \text{otherwise}
      \end{cases}.
  \end{align*}

\begin{figure}[htbp]
\centering
\includegraphics[width=5cm, height=5cm]{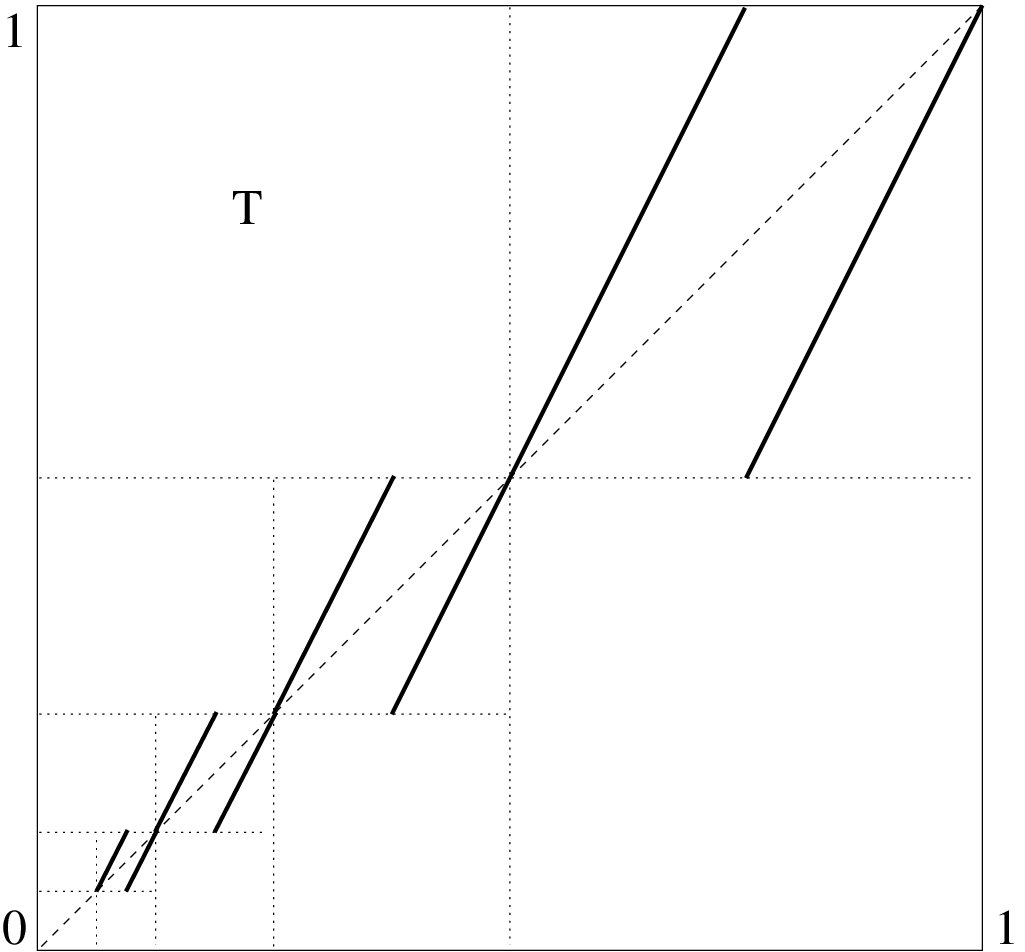}
\caption{A sketch of the map $T$.}
\label{fig2}
\end{figure}
It is clear that $DT\equiv2$ and $DT^2\equiv0$ outside the
compact set $\cc:=\{0\}\cup\{2^{-n}, 2^{-n}+2^{-(n+1)}:
n\in\Z^+\}$. It is easy to see that Lebesgue measure
$\leb$ on $[0,1]$ is invariant and each interval
$[2^{-n},2^{-n+1}]$ supports an ergodic component of
$\leb$ given by the normalized restriction of $\leb$
to this interval.


  Moreover it is straightforward to check that the set $\cc$
  satisfies conditions $(S1)$ through $(S3)$ with constants
  $B=\beta=1$, so $\cc$ is a non-degenerate singular set for
  $T$. In addition, conditions $(H_2), (H_3)$ and $(H_4^*)$
  are also easily checked. 

  However the slow recurrence condition is not satisfied:
  for each given $\delta>0$ and $N>1$ there exists $k>N$
  such that $2^{-k+1}<\delta$ and we have
  \begin{align*}
\liminf_{n\to+\infty}\frac1n\sum_{j=0}^{n-1}-\log\dist_\delta(T^j(x),
\cc)
    \ge k > N \quad\text{for all}\quad
  x\in(2^{-k},2^{-k+1}). 
\end{align*}
But this condition fails in a small set: for each $N>1$ the
points for which the above inequality holds are contained in
$[0,2^{-[\log_2 N]+1})$, where $[x]$ denotes the integer part
of $x$.
\end{example}

\subsection{Problems}
\label{sec:problems}

We list below some open problems related with our setting of
random non-uniformly expanding maps.

\begin{enumerate}
 \item \label{prob:vianamaps}
 Consider the family $f_\theta(x)=a_0+\theta-x^2$ of
  quadratic maps of $\Y=\R$ as in Example \ref{ex:vianamaps}, set
  $\toro=[-\epsilon,\epsilon]^\N$ for some fixed
  $\epsilon>0$ and let $\alpha:\toro\to\toro$ be the left
  shift map on $\toro$ endowed with the ergodic invariant
  measure $\nu=\lambda_\epsilon^\N$, where
  $\lambda_\epsilon$ is Lebesgue measure on $[-\epsilon,\epsilon]$. 
\noindent
 Is $\vfi(\theta,x)=(\alpha(\theta),f_{\theta_1}(x))$
  non-uniformly expanding for random orbits for some
  parameter $a_0\in\R$? (Or, equivalently, 
  is $\vfi$ non-uniformly
  expanding along the vertical direction?)
\item \label{prob:intermittent}
Consider $\toro, \alpha, \nu$ as in the previous item
\eqref{prob:vianamaps}. Let
$f_\theta(x)=f_1(x)+\theta \bmod1$ be a family of
  local diffeomorphisms of the circle, where $f_1$ is given in
  Example \ref{ex:intermittent}. Is
$\vfi(\theta,x)=(\alpha(\theta),f_{\theta_1}(x))$
  non-uniformly expanding for random orbits for some
  exponent $\beta\in(0,1)$? 
  We note that according to \cite{ArTah} the answer is
  affirmative if $f_1$ is defined with exponents
  $\beta\ge1$.
\item Consider $f_\theta$ as in the previous item
  \eqref{prob:intermittent}. Let
  $\alpha(\theta)=\theta+\omega\bmod1$ (for some fixed
  $\omega$) be an irrational rotation with uniquely ergodic
  measure $\nu=$ Lebesgue measure.  Is
  $\vfi(\theta,x)=(\alpha(\theta),f_{\theta}(x))$
  non-uniformly expanding along the vertical direction?
  What if we consider $\alpha$ a circle diffeomorphism with
  irrational rotation number and $\nu$ an ergodic
  $\alpha$-invariant probability which is non-atomic and
  singular with respect to Lebesgue measure?
  \end{enumerate}

\section{Basic invariant measures}
\label{subs:basic-invariant-measures}

We assume from now on that the skew-product map satisfies $(H_1)$,
$(H_2^*)$, $(H_3)$ and $(H_4)$ (or $(H_4^*)$) in the case $\Y=I_0$,
or it satisfies $(H_1)$, $(H_2^*)$, $(H_3)$, $(H_5)$ and $(H_6)$
in the case $\Y$ is other compact manifold. The condition $(H_2^*)$
is as follows
\begin{enumerate}				
\item[$(H_2^*)$] $\al:\toro\ra\toro$ is a bimeasurable
  bijection with an ergodic invariant probability measure
  $\nu$ such that $\nu(\Disc_\alpha)=0$ (we recall that
  $\Disc_\alpha$ is the set of discontinuity points of
  $\alpha$).
\end{enumerate}
In section
\ref{sec:Remove-H2} we show how to replace condition
$(H_2^*)$ by $(H_2)$.

We recall that $\leb$ is the normalized Lebesgue
measure on $I_0$. Since $\al$ is invertible, the
functions $f_{\al^{-j}(\te)}^{j}$ are well defined and
they send $\{\al^{-j}(\te)\}\times I_0$ on
$\{\te\}\times I_0$, for $\te\in\toro$, $j\geq 1$.
Thus, we can define the following measures on $I_0$,
for every $\te\in \toro$ and every $n\in\N$,
\begin{equation*}  \label{eq:eta-n-theta}
  \eta_n(\te)=\frac{1}{n} \sum_{j=1}^{n} (f_{\al^{-j}(\te)}^{j})_{*}
\leb
\end{equation*}
and using them, for every $n\in\N$ we define the following measures on
$\toro\times I_0$, 
\begin{equation*}
\eta_n= \int \eta_n(\te)\: d\nu(\te).
\end{equation*}
The integral above means that for any continuous function
$g:\toro\times I_0\to\R$ we have
\begin{align*}
  \eta_n(g)=\int g\,d\eta_n=\int \left( \int
    g(\theta,x)\,d\eta_n(\theta)(x) \right)d\nu(\theta).
\end{align*}

We recall from Section~\ref{sec:statem-results} that
$\mathcal{B}_{\toro}$ is the Borel $\sigma$-algebra on
$\toro$. To be able to define the measure $\eta_n$, we
need that for every continuous function $h:I_0\to\R$
the map
\begin{equation*}
\te\mapsto  \eta_n(\te)(h) = \int h \,d\eta_n(\theta)
\end{equation*}
is measurable. This is proved in Appendix~\ref{sec:measur}.

Assuming that these measures are all well-defined, we can
easily prove some key properties of the accumulation points
of $(\eta_n)_{n\ge1}$.

\begin{lemma}
  \label{le:marginal-nu}
  For every probability measure $\eta$ which is a weak$^*$
  limit of $(\eta_{n})_{n\ge1}$ we have that $\eta_n(A\times
  I_0)=\nu(A)$ for each $n\ge1$ and $\eta(A\times
  I_0)=\nu(A)$, for all $A\in\B_\toro$.
\end{lemma}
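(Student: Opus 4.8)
The plan is to prove the two statements separately, starting with the finite-level identity $\eta_n(A\times I_0)=\nu(A)$ and then deducing the statement for weak$^*$ limits. For the finite-level claim, I would unwind the definitions: by construction $\eta_n = \int \eta_n(\theta)\,d\nu(\theta)$ where $\eta_n(\theta) = \frac1n\sum_{j=1}^n (f_{\al^{-j}(\theta)}^j)_*\leb$ is a probability measure on $I_0$ (each pushforward of the probability measure $\leb$ — recall $I_0$ is a compact interval on which $\leb$ is normalized Lebesgue measure — is a probability measure, and the average of $n$ of them is again one). Hence for the cylinder set $A\times I_0$ we get
\begin{align*}
  \eta_n(A\times I_0) = \int_\toro \eta_n(\theta)(I_0)\,\mathbf 1_A(\theta)\,d\nu(\theta)
  = \int_\toro \mathbf 1_A(\theta)\,d\nu(\theta) = \nu(A),
\end{align*}
using that $\eta_n(\theta)(I_0)=1$ for every $\theta$. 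The only subtlety here is that this computation is not literally an integral of a continuous function, so strictly speaking one should first check it for $g(\theta,x)=h(x)\psi(\theta)$ with $h,\psi$ continuous, obtaining $\eta_n(h\otimes\psi)=\int \eta_n(\theta)(h)\,\psi\,d\nu$, then extend to indicator functions of measurable rectangles by a monotone class / Dynkin argument; the measurability of $\theta\mapsto\eta_n(\theta)(h)$ is exactly what is deferred to the Appendix, so I would just cite that.

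For the statement about weak$^*$ limits, let $\eta$ be a weak$^*$ accumulation point of $(\eta_{n_k})_k$. The obstacle here is that $\mathbf 1_{A\times I_0}$ is not continuous in the $\toro$-variable when $A$ is an arbitrary Borel set, so I cannot simply pass to the limit in $\eta_{n_k}(A\times I_0)=\nu(A)$. The clean way around this is to work with the pushforward under the projection $\pi_\toro:\toro\times I_0\to\toro$. Since weak$^*$ convergence $\eta_{n_k}\to\eta$ and continuity of $\pi_\toro$ imply $(\pi_\toro)_*\eta_{n_k}\to(\pi_\toro)_*\eta$ in the weak$^*$ topology on probability measures on $\toro$, and since $(\pi_\toro)_*\eta_{n_k}=\nu$ for every $k$ by the finite-level identity just proved, we get $(\pi_\toro)_*\eta=\nu$. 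Unwinding, $(\pi_\toro)_*\eta(A)=\eta(\pi_\toro^{-1}(A))=\eta(A\times I_0)$, so $\eta(A\times I_0)=\nu(A)$ for all $A\in\B_\toro$, as desired.

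A couple of points I would be careful about: first, to invoke $(\pi_\toro)_*\eta_{n_k}\to(\pi_\toro)_*\eta$ I only need that $\eta_{n_k}(g\circ\pi_\toro)\to\eta(g\circ\pi_\toro)$ for every $g\in C^0(\toro,\R)$, which is immediate since $g\circ\pi_\toro\in C^0(\toro\times I_0,\R)$; and second, I should note that the uniqueness-of-limit step uses that a Borel probability measure on a Polish space is determined by its integrals against bounded continuous functions, which holds here since $\toro$ is Polish by hypothesis. No compactness of $\toro$ is needed because we already know both $(\pi_\toro)_*\eta_{n_k}$ and their limit are probability measures with the same values on continuous functions. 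Altogether the proof is short; the one genuinely non-formal ingredient is the reduction from the measurable rectangle $A\times I_0$ to continuous test functions, handled by the projection trick above.
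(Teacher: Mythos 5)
Your proof is correct, and the finite-level computation matches the paper exactly; but your treatment of the passage to the limit is genuinely different and, in my view, slightly cleaner. The paper argues directly on indicator functions of rectangles: it observes that for any $A\in\B_\toro$ whose boundary satisfies $\eta(\partial A\times I_0)=0$, the portmanteau theorem gives $\eta(A\times I_0)=\lim_k\eta_{n_k}(A\times I_0)=\nu(A)$, and then invokes the standard fact that such $\eta$-continuity sets generate $\B_\toro$ modulo $\eta$-null sets. You instead push everything down through the continuous projection $\pi_\toro:\toro\times I_0\to\toro$: weak$^*$ convergence of $\eta_{n_k}$ to $\eta$ and continuity of $\pi_\toro$ give $(\pi_\toro)_*\eta_{n_k}\to(\pi_\toro)_*\eta$, each $(\pi_\toro)_*\eta_{n_k}$ equals $\nu$ exactly by the finite-level identity, and since Borel probability measures on the Polish space $\toro$ are determined by their integrals against bounded continuous functions, $(\pi_\toro)_*\eta=\nu$. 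What you gain is that you never need the boundary-zero/continuity-set machinery or the claim that such sets generate the $\sigma$-algebra; what the paper's approach buys is a template that it reuses verbatim in Lemma~\ref{le:null-discont} (Case 2), where one genuinely must estimate $\eta$ on specific open sets and the projection trick is not available. Both proofs correctly lean on the Appendix for measurability of $\theta\mapsto\eta_n(\theta)(h)$, and both implicitly use the tightness discussion to know that accumulation points of $(\eta_n)$ exist at all.
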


\begin{proof}
  We fix $A$ and $\eta$ as in the statement. Then we have
  for all $n\in\Z^+$ by definition $\eta_n(A\times
  I_0)=\int_A\eta_n(\theta)(I_0)\,d\nu(\theta)=\nu(A)$. If
  we take $A\in\B_\toro$ such that $\eta(\partial(A\times
  I_0))=\eta((\partial A) \times I_0)=0$, then using
  $\eta_{n_k}\xrightarrow[k\to+\infty]{w^*}\eta$ we get
  $\eta(A\times I_0)=\nu(A)$. Since the family of these sets
  generates $\B_\toro$ modulo $\eta$-null sets, we are done.
\end{proof}

\begin{lemma}
  \label{le:null-discont}
  For every probability measure $\eta$ which is a weak$^*$
  limit of $(\eta_{n})_{n\ge1}$ we have that
  $\eta(\Disc)=0$, where $\Disc$ is the set of discontinuity
  points of $\vfi$.
\end{lemma}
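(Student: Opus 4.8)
The plan is to show that the set of discontinuities $\Disc$ of $\vfi$ is contained in a $\vfi$-null set for each of the approximating measures $\eta_n$, uniformly enough to pass to the limit. First I would identify $\Disc$ precisely: since $\vfi(\theta,x)=(\alpha(\theta),f(\theta,x))$, a point $(\theta,x)$ is a discontinuity of $\vfi$ only if $\theta\in\Disc_\alpha$, or $\theta\in\Disc_F$, or $x\in\disc_\theta$. In other words $\Disc\subset (\Disc_\alpha\times I_0)\cup(\Disc_F\times I_0)\cup\disc$. By hypothesis $(H_2^*)$ and $(H_3)$ we have $\nu(\Disc_\alpha)=\nu(\Disc_F)=0$, and by Lemma~\ref{le:marginal-nu} every weak$^*$ limit $\eta$ has marginal $\nu$ on $\toro$, so $\eta\big((\Disc_\alpha\cup\Disc_F)\times I_0\big)=0$. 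Thus the whole problem reduces to showing $\eta(\disc)=0$, i.e. that the graph of the discontinuity set along the fibers carries no mass.

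Next I would handle $\eta(\disc)=0$ by distinguishing the two structural hypotheses. Under $(H_4)$ the fiber maps $f_\theta$ are $C^3$ diffeomorphisms away from critical points with no discontinuities at all, so $\disc=\emptyset$ and there is nothing to prove; the content is entirely in case $(H^*_4)$. Under $(H^*_4)$ we are given the strong non-recurrence property: for every $\ell\in\Z^+$ there is a neighborhood $V$ of $\overline{\disc}$ with $\vfi^k(V)\cap V=\emptyset$ for $k=1,\dots,\ell$. The idea is that since each $\eta_n$ is built from pushforwards $(f^j_{\alpha^{-j}(\theta)})_*\leb$ of Lebesgue measure — which is non-atomic — the mass that $\eta_n$ places in a small neighborhood $V$ of $\disc$ is controlled: an orbit segment of length $n$ can spend at most $n/\ell$ steps inside $V$ (because consecutive visits are $\ell$ apart by the non-recurrence condition), so $\eta_n(V)\le 1/\ell$ up to the contribution of the finitely many shortest pushforwards. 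Passing $n\to\infty$ along the weak$^*$-convergent subsequence and then shrinking $V$ down to $\overline{\disc}$ (using that $\disc\subset\overline{\disc}$ and that $\overline{\disc}$ is closed, so $\eta(\overline{\disc})\le\limsup\eta_n(V)\le1/\ell$ by the portmanteau theorem), and finally letting $\ell\to\infty$, yields $\eta(\overline{\disc})=0$, hence $\eta(\disc)=0$.

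I would make the step "$\eta_n(V)\le 1/\ell$" precise as follows. Write $\eta_n=\frac1n\sum_{j=1}^n(f^j_{\alpha^{-j}(\theta)})_*\leb$ integrated over $\theta$. For a fixed $\theta$ and a fixed base point $y$ with $f^j_{\alpha^{-j}(\theta)}(y)\in V$ meaning the forward $\vfi$-orbit of $(\alpha^{-j}(\theta),y)$ is in $V$ at time $j$: if times $j_1<j_2$ both land the same orbit in $V$ with $j_2-j_1\le\ell$, this contradicts $\vfi^{j_2-j_1}(V)\cap V=\emptyset$. Hence among any $\ell$ consecutive indices at most one contributes, giving $\sum_{j=1}^n \mathbf 1_V(\vfi^j(\cdot))\le n/\ell + 1$, and integrating against $\leb$ (a probability on $I_0$, or normalized) and then $d\nu$ gives $\eta_n(V)\le 1/\ell + 1/n$. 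I expect the main obstacle to be the bookkeeping needed to phrase non-recurrence correctly in terms of the \emph{backward} compositions $f^j_{\alpha^{-j}(\theta)}$ that define $\eta_n$ — one must translate "$\vfi$-orbit visits $V$" into a statement about these pushforwards and confirm the invertibility of $\alpha$ (available from $(H_2^*)$) is all that is needed; and secondly, making sure the closure $\overline{\disc}$ behaves well under the neighborhoods $V$ so that $\bigcap_\ell V_\ell = \overline{\disc}$ can be arranged, so that the portmanteau inequality $\eta(\overline{\disc})\le\liminf_k\eta_{n_k}(V)$ applies cleanly.
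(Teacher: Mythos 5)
Your proof takes essentially the same approach as the paper: decompose $\Disc\subset(\Disc_\alpha\times I_0)\cup(\Disc_F\times I_0)\cup\disc$, dispose of the first two pieces via Lemma~\ref{le:marginal-nu} and $(H_2^*)$, $(H_3)$, then bound $\eta_n(V_\ell)\le 1/\ell + O(1/n)$ using the non-recurrence condition and the $\alpha$-invariance of $\nu$, and finally apply portmanteau and let $\ell\to\infty$. Two small remarks: you do not actually need $\bigcap_\ell V_\ell=\overline\disc$ — it suffices that each $V_\ell\supset\overline\disc$, which $(H_4^*)$ already provides; and the paper's version of this lemma also includes a third case for higher-dimensional fibers (where there is no non-recurrence hypothesis and one instead uses the slow-recurrence condition~\eqref{eq:slow-rec} to reach a contradiction), which you omit, though that case is arguably outside the one-dimensional setting of Section~\ref{subs:basic-invariant-measures}.
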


\begin{proof}
We consider the following cases.
\begin{description}
\item[Case 1] the maps $f_\theta$ are $C^3$ for all
  $\theta\in\toro$, that is, there are no discontinuities
  along the vertical direction: $\disc_\theta=\emptyset$ for
  all $\theta\in\toro$. Thus, it holds that
$\Disc\subset(\Disc_\alpha\times
  I_0)\cup(\Disc_{F}\times I_0)$. Then we have, by
  Lemma~\ref{le:marginal-nu}, that $\eta(\Disc)\le
  \eta(\Disc_\alpha\times I_0)+\eta(\Disc_F\times I_0)\le
  \nu(\Disc_\alpha)+\nu(\Disc_F)=0$ by $(H_2^*)$ and $(H_3)$.
\item[Case 2] we have discontinuities
  $\disc_\theta\neq\emptyset$ for some $\theta\in\toro$. But
  we assume that there is no recurrence to the set
  $\disc=\{(\theta,x): x\in\disc_\theta,
  \theta\in\toro\}$; 
Section~\ref{sec:statem-results}. 
  see condition $(H_4^*)$. Hence for every given
  $\ell\in\Z^+$ we can find an open neighborhood $V=V_\ell$
  of $\disc$ in $\toro\times I_0$ such that
  $\vfi^k(V)\cap V=\emptyset$ for all $k=1,\dots,\ell$.
  This implies that for any $z\in\toro\times I_0$ we have
  $\sum_{j=1}^n \chi_{V_\ell}(\vfi^j(z))\le (n/\ell)+1$. Thus,
  since $\eta(V)\le\liminf_{n\to+\infty}\eta_n(V)$ (see
  e.g. \cite[Theorem 2.1]{billingsley99}), it is enough to
  estimate for every big enough $n\in\Z^+$, using that $\nu$
  is $\alpha$-invariant and that $\alpha$ is invertible
  \begin{align*}
    \eta_n(V)
    &=
    \int\int
\frac1n\sum_{j=1}^n\chi_V\big(\theta,f^j_{\alpha^{-j}(\theta)}(x)\big)
    \,d\leb(x)\,d\nu(\theta)
    \\
    &=
    \int\frac1n\int\sum_{j=1}^n
    \chi_V\big(\vfi^j(\alpha^{-j}(\theta),x)\big)
    \,d\leb(x)\,d\nu(\theta)
    \\
    &=
    \int\frac1n\int\sum_{j=1}^n
    \chi_V\big(\vfi^j(\theta,x)\big)
    \,d\leb(x)\,d\nu(\theta)
    \le\frac2\ell.
  \end{align*}
  So for every $\ell>1$ we can find and open neighborhood
  $V$ of $\disc$ such that
  $\eta(\disc)\le\eta(V)\le2/\ell$.  Finally, since
  $\Disc\subseteq (\Disc_\alpha\times
  I_0)\cup(\Disc_{F}\times I_0)\cup\disc$ we obtain
  from the above together with Lemma~\ref{le:marginal-nu}
  \begin{align*}
    \eta(\Disc)
    &\le
    \eta(\Disc_\alpha\times I_0)+\eta(\Disc_{F}\times I_0)+
\eta(\disc)
    =\nu(\Disc_\alpha)+\nu(\Disc_F)=0
  \end{align*}
  as stated.
\item[Case 3] {In the higher dimensional setting, we have
  slow recurrence to the set of discontinuities $\disc\subset\cc$ 
  of $\vfi$ in the vertical direction}. Arguing by contradiction,
  let us assume that $\eta(\disc)>0$. Then there exists
  $a>0$ such that $\eta(B(\disc,\varrho))>a$ for all
  $\varrho>0$. 

  We fix $0<\varepsilon<a$ and then find $\delta>0$ given by
  the slow recurrence condition~\eqref{eq:slow-rec}. After
  that we fix $0<\varrho<\delta$ so that
  \begin{align*}
    \inf\{-\log\dist((\theta,x),\disc): (\theta,x)\in
    B(\disc,\varrho)\}>1
    \quad\text{and}\quad
    \eta_n(\partial B(\disc,\varrho))=0, \quad n\ge1
  \end{align*}
  and also $\eta(\partial B(\disc,\varrho))=0$. Then we note
  that, for each $n\ge1$, since $\nu$ is $\alpha$-invariant
  \begin{align*}
    a<\eta_n(B(\disc,\varrho))
    &=
    \int\int
\frac1n\sum_{j=1}^n\chi_{B(\disc,\varrho)}\big(\theta,f^j_{\alpha^{-j}
(\theta)}(x)\big)
    \,d\Leb(x)\,d\nu(\theta)
    \\
    &=
    \int\frac1n\int\sum_{j=1}^n
    \chi_{B(\disc,\varrho)}\big(\vfi^j(\alpha^{-j}(\theta),x)\big)
    \,d\Leb(x)\,d\nu(\theta)
    \\
    &=
    \int\frac1n\int\sum_{j=1}^n
    \chi_{B(\disc,\varrho)}\big(\vfi^j(\theta,x)\big)
    \,d\Leb(x)\,d\nu(\theta)
    \\
    &\le
    \int\int
    \frac1n\sum_{j=1}^n
-\log\dist_{\delta}\big(f^j_\theta(x),\crit_\theta\big)
    \,d\Leb(x)\,d\nu(\theta).
  \end{align*}
  Moreover, for big enough $n$ we get
  $\varepsilon>\eta_n(B(\disc,\varrho))\ge a$ thus
  $a<\varepsilon$. {This contradiction concludes the proof, since
$\Disc\subseteq (\Disc_\alpha\times
  I_0)\cup(\Disc_{F}\times I_0)\cup\disc$ as in Case 2}.
\end{description}  
\end{proof}

\begin{lemma}\label{le:every-weak-limit}
  Every weak$^*$ limit of $(\eta_{n})_{n\ge1}$ is a
  $\vfi$-invariant probability measure.
\end{lemma}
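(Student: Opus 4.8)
The plan is to show that any weak$^*$ limit $\eta$ of a subsequence $(\eta_{n_k})_{k\ge1}$ satisfies $\vfi_*\eta=\eta$, by testing against continuous functions $g\in C^0(\toro\times I_0,\R)$. The starting point is the elementary observation that the defining averages are \emph{almost} shift-invariant: by construction,
\begin{align*}
  \eta_n(\te)=\frac1n\sum_{j=1}^n (f^{j}_{\al^{-j}(\te)})_*\leb,
  \qquad
  (f_\te)_*\,(f^{j}_{\al^{-j}(\te)})_*\leb
  =
  (f^{j+1}_{\al^{-(j+1)}(\al(\te))})_*\leb,
\end{align*}
so that $(f_\te)_*\eta_n(\te)=\frac1n\sum_{j=2}^{n+1}(f^{j}_{\al^{-j}(\al(\te))})_*\leb = \eta_n(\al(\te)) + \frac1n\big[(f^{n+1}_{\al^{-(n+1)}(\al(\te))})_*\leb - (f^{1}_{\al^{-1}(\al(\te))})_*\leb\big]$. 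Integrating this identity in $\te$ against $\nu$ and using that $\nu$ is $\al$-invariant (via $(H_2^*)$), the ``telescoping'' correction term has total mass at most $2/n\to0$, so for any bounded measurable $g$ that is continuous on a set of full $\eta$-measure one expects $\int g\circ\vfi\,d\eta_n - \int g\,d\eta_n\to0$ along the subsequence. First I would make this precise by writing, for $g\in C^0$,
\begin{align*}
  \int g\circ\vfi\,d\eta_n
  =
  \int\!\!\int g\big(\al(\te),f_\te(x)\big)\,d\eta_n(\te)(x)\,d\nu(\te),
\end{align*}
substituting the definition of $\eta_n(\te)$, pushing the $f_\te$ inside to convert $(f^j_{\al^{-j}(\te)})_*\leb$ into $(f^{j+1}_{\al^{-(j+1)}(\al(\te))})_*\leb$, and finally performing the change of variables $\te\mapsto\al^{-1}(\te)$ in the outer integral (legitimate since $\al$ is a bimeasurable bijection preserving $\nu$), arriving at $\int g\,d\eta_n$ up to the two boundary terms of combined mass $\le 2\|g\|_\infty/n$.

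The one subtlety — and the step I expect to be the real obstacle — is the passage to the limit: weak$^*$ convergence $\eta_{n_k}\to\eta$ gives $\int g\,d\eta_{n_k}\to\int g\,d\eta$ for continuous $g$, but $g\circ\vfi$ need \emph{not} be continuous, since $\vfi$ has discontinuities on the set $\Disc$. This is exactly where Lemma~\ref{le:null-discont} enters: since $\eta(\Disc)=0$, the function $g\circ\vfi$ is continuous $\eta$-a.e., hence is $\eta$-almost everywhere equal to a bounded function whose set of discontinuities is $\eta$-null, and the Portmanteau theorem (see \cite[Theorem~2.1]{billingsley99}) yields $\int g\circ\vfi\,d\eta_{n_k}\to\int g\circ\vfi\,d\eta$. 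Combining this with the previous paragraph gives $\int g\circ\vfi\,d\eta=\int g\,d\eta$ for every $g\in C^0(\toro\times I_0,\R)$, which is precisely $\vfi_*\eta=\eta$. Finally, $\eta$ is a probability measure because each $\eta_n$ is (each $\eta_n(\te)$ is a probability on $I_0$ and $\nu$ is a probability on $\toro$, so $\eta_n(\toro\times I_0)=1$, and weak$^*$ limits of probabilities on the compact-fiber space $\toro\times I_0$ are probabilities — here one uses tightness, automatic since $\{\theta_0\}\times I_0$ issues are handled by Lemma~\ref{le:marginal-nu}, which also pins down the marginal). This completes the proof.
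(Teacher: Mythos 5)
Your proposal is correct and follows essentially the same route as the paper: the same almost-shift-invariance identity (the paper expresses the intermediate quantity via $\tfrac{n+1}{n}\eta_{n+1}(g)$ rather than $\eta_n(g)$, but this is the same telescoping argument), followed by invoking $\eta(\Disc)=0$ from Lemma~\ref{le:null-discont} together with the mapping theorem for $\eta$-a.e.\ continuous bounded functions. One tiny slip: that last step is \cite[Theorem~2.7]{billingsley99}, not Theorem~2.1 (which concerns the portmanteau characterization via open/closed sets and is what Lemma~\ref{le:null-discont} itself uses).
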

\begin{proof}
  Let us suppose, without loss of generality, that the
  sequence converges in the weak$^*$ topology to some
  probability measure, i.e., $\eta_{n} \ra \eta$ when
  $n\ra\infty$.  See Lemma \ref{le:tightness} and Remark
  \ref{rmk:tight-eta_n}.

  Let $g:\toro\times I_0\to\R$ be a continuous and bounded function.
We note that $\eta_n(g\circ\vfi)$ can be rewritten as
\begin{align*}
  \iint  g(\vfi(\theta,x)) \,d\eta_n(\te)(x) \:
  d\nu(\te) 
  &= \iint g(\alpha(\theta),f_\theta(x))
  \,d\eta_n(\te)(x)\: d\nu(\te)
  \\
  &=
  \int \left(\frac{1}{n}
    \sum_{j=1}^{n} (f_{\al^{-1}(\te)}\circ\ldots\circ
    f_{\al^{-j}(\te)})_{*}\leb\right)
  (g\circ\vfi)
  \: d\nu(\te)
  \\
  &=
  \frac{1}{n}
    \sum_{j=1}^{n}\!\!\iint\!\!\!
    g\Big(\alpha(\theta),
    f_\theta\big( f_{\al^{-1}(\te)}\circ\ldots\circ
    f_{\al^{-j}(\te)}(x)\big)\Big)
    \,d\leb(x)  \: d\nu(\te).
\end{align*}
But the last integral equals
\begin{align*}
  \int\frac{1}{n}\left(
    \int\sum_{j=1}^{n+1} 
    g(\alpha(\theta),
    (f_{\te}\circ f_{\al^{-1}(\te)}\circ\ldots\circ
    f_{\al^{-j+1}(\te)})(x))
    \,d\leb(x)
    -
    \int g(\alpha(\theta),f_{\te}(x))\,d\leb(x)
  \right) d\nu(\te)
\end{align*}
that is $ \int \big(\frac{n+1}{n} \eta_{n+1}(\al(\te))(g)-
\frac{1}{n}((f_{\te})_{*}\leb) (g(\alpha(\theta),\cdot)\big)
\: d\nu(\te)$.  We note that the last integral is bounded by
$\sup|g|$, which is finite.

Now since $\eta(\Disc)=0$ by Lemma~\ref{le:null-discont}, we
then arrive at (see e.g. \cite[Theorem 2.1]{billingsley99})
\begin{align*}
  (\fhi_{*} \eta)g
  &= 
  \lim_{n\ra\infty}\eta_n(g\circ \vfi)
  =
  \lim_{n\ra\infty} \int \frac{n+1}{n}\eta_{n+1}(\al(\te))(g) \:
d\nu(\te).
\end{align*}
But $\nu$ is $\alpha$-invariant and the function
$\theta\mapsto\eta_{n+1}(\al(\te))(g)$ is measurable, hence
the last expression equals
\begin{align*}
\lim_{n\ra\infty} &\int \frac{n+1}{n}\eta_{n+1}(\te)(g)
\: d\nu(\te)
= \lim_{n\ra\infty} \frac{n+1}{n}\eta_{n+1}(g)
= \eta(g).
\end{align*}
 This concludes the proof.
\end{proof}

\section{Absolutely continuous invariant
  measures} \label{sec:acim} 

Now we are going to define measures which are absolutely
continuous along the vertical fibers.  For this, we will use
the notion of hyperbolic-like times used in \cite{solano}.

\subsection{Notations and main technical result}
\label{sec:case-with-one}

We state a result for sequences of one dimensional
maps. This result is used to analyze the dynamics of the
skew-product restricted to the vertical leaves. Since we
have to consider skew-products in the different settings
$(H_4)$ and $(H_4^*)$, we also need to state the result for
sequences of one dimensional maps with conditions given by
these two settings. For $k\geq 0$, let us denote by
$\crit_k$ and $\disc_k$ the set of critical points and the
set of discontinuities, respectively, of $f_k:I_0\to I_0$.

We say that:
\begin{itemize}
\item \emph{a sequence of one dimensional maps} $\{f_k\}$
  \emph{satisfies} $(\widetilde{H_4})$ if: $f_k$ are $C^1$
  maps, $p:=\sup \{\# \crit_k, k\in\N \} <\infty$ and
  $\Gamma:=\sup \{ |f'_k(x)| , k\in\N, x\in I_0 \} <\infty $
  and the sequence $\{f'_k\}$ is equicontinuous.
\item \emph{a sequence of one dimensional maps}
$\{f_k\}$ \emph{satisfies} $(\widetilde{H_4^*})$ if: $f_k$ is a map
such that restricted to each connected component of $I_0\setminus
\disc_k$, is a $C^1$ diffeomorphism onto its image, $p:=\sup
\{\# \disc_k, k\in\N \} <\infty$ and $\Gamma:=\sup \{
|f'_k(x)| , k\in\N, x\notin \disc_k \} <\infty $. 
\end{itemize}
Finally we assume that for every $\ell\in\Z^+$, there exist
$\epsilon>0$ and neighborhoods $V_{\epsilon} \disc_k$ of
$\disc_k$ (for all $k\geq 0$) such that
\begin{equation} \label{eq:recurrence-property}
  f_i^j(V_{\epsilon}\disc_i)\cap V_{\epsilon}\disc_{i+j}=\emptyset
  \qquad \text{ for } i\geq 0, \:1\leq j\leq \ell.
\end{equation}
where $f_i^j=f_{i+j-1}\circ\ldots \circ f_{i+1}\circ f_{i}$.

Let us recall some additional definitions (see \cite{solano}
for more details). For every $x\in I_0$, $i\in\N$, we denote
  \begin{equation*}
  f^i(x):= f_{i-1}\circ\ldots\circ f_{1}\circ f_{0}(x),
  \end{equation*}
  and we write $T_i\left(\{f_k\},x\right)$ for  the
  \emph{maximal interval $T\subset I_0$, containing $x$ such
    that $f^i_{| T}$ is a $C^3$
    diffeomorphism}, and $r_i\left(\{f_k\},x\right)$ for the
  \emph{minimum between the lenghts of the connected
    components of $f^i(T_i(\{f_k\},x)\setminus \{x\})$}.
  

The following is a central technical result in our arguments. For the
proof see subsection \ref{sec:proof-of-theoremB}.

\begin{theorem} \label{PrincipalB} Let $\{f_k\}$ be a
  sequence of maps $f_k:I_0\ra I_0$ which satisfies
$(\widetilde{H_4})$
  (or $(\widetilde{H_4^*})$).  Assume that for some $\la>0$,
\begin{equation*}
  \displaystyle{\ilim \frac{1}{n} \log |Df^n(x)|>2\lambda}
  \quad 
\end{equation*}
for every $x\in E\subset I_0$. Then, there exists
$\constantB>0$ such that
\begin{equation} \label{liminfr_i}
\ilim \frac{1}{n} \soma r_i(\{f_k\},x)\geq 3\constantB 
\end{equation}
Lebesgue almost every $x\in E$.  Moreover, in the case of
$\{f_k\}$ satisfy condition $(\widetilde{H_4})$, $\constantB$ depends
only on $\la$, the modulus of continuity and the uniform
bound for the derivatives of the sequence $\{f_k\}$, and in
the uniform bound $p$ for the number of critical points. In
the case of $\{f_k\}$ satisfy condition $(\widetilde{H_4^*})$,
$\constantB$ depends only on $\la$, the uniform bound for
the derivatives of the sequence $\{f_k\}$ (outside of
discontinuities), the uniform bound $p$ for the number of
discontinuity points and the uniformity of $\epsilon$ on
condition \eqref{eq:recurrence-property}.
\end{theorem}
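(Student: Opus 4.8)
The plan is to prove Theorem~\ref{PrincipalB} by a \emph{hyperbolic-times} argument adapted to sequences of one-dimensional maps, following the strategy in \cite{solano} but tracking carefully the quantitative dependence of the constant $\constantB$. The key point is that positivity of $\ilim \frac1n\log|Df^n(x)|$ with a definite rate $2\la$ on $E$ must be upgraded, at a positive-frequency set of times along each orbit, to genuine \emph{uniform} backward-contraction estimates on intervals of definite size around $f^i(x)$, which then force $r_i(\{f_k\},x)$ to be bounded below at those times.

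First I would fix $x\in E$ and introduce the sequence $S_n(x)=\sum_{j=0}^{n-1}\log|f_j'(f^j(x))|$, so $S_n(x)=\log|Df^n(x)|$ by the chain rule; the hypothesis says $\ilim S_n(x)/n > 2\la$. Applying a Pliss-type lemma to the sequence $(\log|f_j'(f^j(x))|)_j$ (valid because these increments are bounded above by $\log\Gamma$, using $(H_1)$/$(\widetilde{H_4})$ or $(\widetilde{H_4}^*)$) yields a set of times $n_1<n_2<\dots$ of positive lower density $\theta_0=\theta_0(\la,\Gamma)>0$ at which $\prod_{k=i}^{n-1}|f_k'(f^k(x))|\ge e^{\la(n-i)}$ for all $0\le i<n$; these are the \emph{$\la$-hyperbolic times} for $x$. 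The second ingredient is a bounded-distortion / non-flatness control near critical points: using equicontinuity of $\{f_k'\}$ in case $(\widetilde{H_4})$ (or the footnote replacement, which is what is actually invoked), together with the $C^3$, negative-Schwarzian structure and the Koebe principle, one shows there is a scale $\delta_1>0$ and a distortion bound so that along a hyperbolic-time orbit segment the pullback of a $\delta_1$-interval around $f^n(x)$ is a diffeomorphic preimage on which $f^n$ has distortion $\le C_0$. In case $(\widetilde{H_4}^*)$ one instead uses that each $f_k$ is a diffeomorphism off $\disc_k$ together with the non-recurrence condition \eqref{eq:recurrence-property}: for the chosen $\ell$ the orbit segment of length $\le\ell$ starting near a discontinuity stays away from discontinuities, so on the relevant scale the composition is again a genuine $C^3$ diffeomorphism with controlled distortion.

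Combining these two inputs: at each hyperbolic time $n_k$ the maximal interval $T_{n_k}(\{f_j\},x)$ of diffeomorphic behavior contains a definite neighborhood of $x$, and its image under $f^{n_k}$ contains an interval of length $\ge\constantB$ around $f^{n_k}(x)$ on each side, where $\constantB=\constantB(\la,\Gamma,p,\text{modulus of continuity})$ in case $(\widetilde{H_4})$ and $\constantB=\constantB(\la,\Gamma,p,\text{uniformity of }\epsilon)$ in case $(\widetilde{H_4}^*)$ — this gives $r_{n_k}(\{f_j\},x)\ge\constantB$. Since the hyperbolic times $n_k$ have lower density $\ge\theta_0$, one gets $\ilim \frac1n\sum_{i=1}^{n} r_i(\{f_j\},x) \ge \theta_0\cdot\constantB$, and after renaming $\constantB$ (absorbing $\theta_0$, which itself depends only on $\la$ and $\Gamma$) we obtain the stated bound $\ge 3\constantB$. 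I expect the main obstacle to be establishing the uniform lower bound $r_{n_k}\ge\constantB$ at hyperbolic times with a constant independent of $k$ and of the particular sequence $\{f_j\}$: this requires simultaneously controlling the distortion of long compositions (where Koebe and $Sf_k\le0$ enter) and preventing the diffeomorphic interval $T_{n_k}$ from being squeezed by a nearby critical point or discontinuity before time $n_k$ — it is precisely here that equicontinuity/non-flatness in case $(\widetilde{H_4})$, respectively the finite-time non-recurrence \eqref{eq:recurrence-property} in case $(\widetilde{H_4}^*)$, is indispensable, and the two cases need slightly different bookkeeping even though the conclusion is the same.
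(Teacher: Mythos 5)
Your approach is genuinely different from the paper's, and I believe it has a gap. You propose a \emph{hyperbolic-times} argument: use Pliss to extract a positive-density set of times $n_k$ with backward derivative growth, then show that at each such time the maximal diffeomorphic interval $T_{n_k}(\{f_j\},x)$ has a definite-size image, so $r_{n_k}\ge\constantB$. But in one dimension with critical points and \emph{no} slow-recurrence hypothesis (which is precisely the situation $(\widetilde{H_4})$, and the point of this part of the paper), the telescoping pull-back of a $\delta_1$-interval around $f^n(x)$ is not guaranteed to stay clear of critical points. The Pliss condition $|Df^{n-i}(f^i(x))|\ge e^{\la(n-i)}$ controls the products, not the individual factors $|f_i'(f^i(x))|$: dividing two consecutive lower bounds gives you nothing, so $f^i(x)$ can come arbitrarily close to a critical point for intermediate $i$, and the pull-back of $J_n$ may be cut by a critical preimage before you reach time $0$. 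Negative Schwarzian and Koebe only give distortion control once you already know the branch is diffeomorphic with room to spare; equicontinuity gives a lower bound on $\dist(f^{n-1}(x),\crit_{n-1})$ from $|f_{n-1}'|\ge e^{\la}$, but that bound does not propagate backwards. This circularity is exactly what the slow-recurrence half of the ABV hyperbolic-time definition is designed to break, and it is exactly what the one-dimensional hypothesis here deliberately drops.

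The paper's proof avoids the issue entirely by not trying to produce a single ``good time'' with $r_{n_k}\ge\constantB$. Instead it estimates directly the Lebesgue measure of the \emph{bad set} $A_n(\delta)$ of points whose Cesàro average $\frac1n\sum_{i\le n} r_i$ is small. It encodes which $r_i(x)$ are $\ge\delta$ or $<\delta$ by a binary word $(a_1,\dots,a_n)$, and shows (Lemma~\ref{lemmaprin}) that the total number of connected components of the sets $C_\delta(a_1,\dots,a_n)$ with $\sum a_i<\delta n$ is at most $e^{n\la/2}$. The recurrence control enters only through Lemma~\ref{lemmafrequence}: equicontinuity (resp.\ the non-recurrence of $(\widetilde{H_4}^*)$) is used to bound the \emph{frequency} of visits of orbits in $Y_n(\la)$ to a small neighborhood of the critical/discontinuity set, which is the combinatorial input for the component count — a much weaker statement than pointwise distance bounds. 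Each such component meets $Y_n(\la)\cap A_n(\delta)$ in a set of measure $\le |I_0|e^{-n\la}$ (since $f^n$ is a diffeomorphism with derivative $\ge e^{n\la}$ there), so $\leb(Y_n(\la)\cap A_n(\delta))\le |I_0|e^{-n\la/2}$, and Borel--Cantelli finishes. The conclusion is deliberately the weaker Cesàro lower bound \eqref{liminfr_i}, which is all that Corollary~\ref{MainCorollary} needs, and it is obtainable without ever establishing the existence of hyperbolic pre-balls of definite size. Your plan would need an additional substantial ingredient (something in the spirit of nice intervals / Koebe space for negative-Schwarzian maps, or an independent slow-recurrence estimate) to close the gap you yourself flagged as the main obstacle.
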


For our purposes the following sets are very useful:
\begin{equation*}
\begin{array}{rl}
  \mathcal{H}_i(\{f_k\},\secondelta)
  =&
  \left\{\:x\in I_0 ; \: r_i(\{f_k\},x)>\secondelta\right\};
  \\
  H_i(\{f_k\},\secondelta)
  =&
  \left\{\:x\in I_0 ; \: 
    r_i(\{f_k\},x)>\secondelta \text{ and }  
    |f^{i}(T_i(\{f_k\},x))|>3\secondelta \right\}.
 \end{array}
 \end{equation*}

 We will prove below that every connected component of
 $H_i(\{f_k\}, \secondelta)$ is sent diffeomorphically
 by $f^j$ onto its image with bounded distortion and
 the Lebesgue measure of the image is bounded away from
 zero.  We are interested in applying the last theorem
 to every sequence $\{f_{\al^{j}(\te)}\}_{j\in\Z^+}$,
 for each $\te\in\toro$. For simplicity, from now on,
 for $i\in\N$, $r_i(\te,x)$ denotes the set
 $r_i(\{f_k\},x)$, where $f_k=f_{\al^{k}(\te)}$ for
 every $k\ge0, \te\in\toro$. Analogously for the sets
 $T_i(\te,x)$, $\mathcal{H}_i(\te,\secondelta)$ and
 $H_i(\te,\secondelta)$.

We need the following result showing that
$(\widetilde{H_4^*})$ is a consequence of $(H_4^*)$.
\begin{lemma}
  \label{le:recurrence-H3-star}
  The above condition \eqref{eq:recurrence-property} is a
  consequence of the assumption $(H_4^*)$.
\end{lemma}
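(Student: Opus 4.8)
The plan is to read off \eqref{eq:recurrence-property} for the sequences $f_k=f_{\al^k(\te)}$ directly from the single non-recurrence statement contained in $(H_4^*)$, by transporting it through the conjugacy between $\vfi$ and the non-autonomous fibre dynamics.

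First I would fix $\ell\in\Z^+$ and apply $(H_4^*)$ to this $\ell$, obtaining an open neighborhood $V\supset\overline{\disc}$ with $\vfi^k(V)\cap V=\emptyset$ for $k=1,\dots,\ell$. The one delicate point is to extract from $V$ a single $\epsilon>0$ with $B(\overline{\disc},\epsilon)\subset V$: this uses that $\overline{\disc}$ is compact (which holds in all the situations with discontinuities considered here, where the base $\toro$ is a circle or a shift space), or, equivalently, one simply takes the neighborhood in $(H_4^*)$ to be a metric $\epsilon$-neighborhood of $\overline{\disc}$ from the outset. Granting this, for every $\te'\in\toro$ and $x\in I_0$ with $\dist(x,\disc_{\te'})<\epsilon$ one has $\dist\big((\te',x),\overline{\disc}\big)\le\dist(x,\disc_{\te'})<\epsilon$, hence $(\te',x)\in V$; and crucially $\epsilon$ depends only on $\ell$, not on $\te'$ — this is exactly the uniformity of $\epsilon$ demanded in \eqref{eq:recurrence-property} and used in Theorem~\ref{PrincipalB}.

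Next, for a fixed $\te$ I would put $f_k:=f_{\al^k(\te)}$ (so $\disc_k=\disc_{\al^k(\te)}$) and $V_\epsilon\disc_k:=\{x\in I_0:\dist(x,\disc_k)<\epsilon\}$, and record the elementary identity $\vfi^j\big(\al^i(\te),x\big)=\big(\al^{i+j}(\te),\,f_i^j(x)\big)$, immediate from $\vfi(\te,x)=(\al(\te),f_\te(x))$ and $f_i^j=f_{i+j-1}\circ\dots\circ f_i$. Now suppose, for a contradiction, that $x\in V_\epsilon\disc_i$ and $f_i^j(x)\in V_\epsilon\disc_{i+j}$ for some $i\ge0$ and $1\le j\le\ell$. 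Then by the previous paragraph $(\al^i(\te),x)\in V$ and $\vfi^j(\al^i(\te),x)=(\al^{i+j}(\te),f_i^j(x))\in V$, contradicting $\vfi^j(V)\cap V=\emptyset$. Hence $f_i^j(V_\epsilon\disc_i)\cap V_\epsilon\disc_{i+j}=\emptyset$ for all $i\ge0$ and $1\le j\le\ell$, which is \eqref{eq:recurrence-property} (the relevant range being $1\le j\le\ell$, matching $k=1,\dots,\ell$ in $(H_4^*)$); the sets $V_\epsilon\disc_k$ are neighborhoods of $\disc_k$, and are empty — making the claim vacuous for those indices — whenever $\disc_{\al^k(\te)}=\emptyset$.

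The whole argument is a routine transcription of the skew-product action once the uniform $\epsilon$ is in hand, so the single genuine obstacle is that first step: guaranteeing that the neighborhood furnished by $(H_4^*)$ contains a uniform metric neighborhood of $\overline{\disc}$. I would handle it by invoking compactness of $\overline{\disc}$ in the cases at hand (or, more cleanly, by phrasing $(H_4^*)$ directly in terms of $\epsilon$-neighborhoods), and I would make explicit that the resulting $\epsilon$ is the same for every base point $\te$, since that uniformity is precisely what Theorem~\ref{PrincipalB} relies on to keep the constant $\constantB$ under control.
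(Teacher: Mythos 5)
Your core argument is the same as the paper's: transport the non-recurrence of $V$ through the skew-product structure of $\vfi$ to the fibre dynamics, using that $\vfi^j(\alpha^i(\theta),x)=(\alpha^{i+j}(\theta),f_i^j(x))$, and derive the emptiness of the intersection. The paper phrases this with projections ($\pi_2$ applied to $\vfi^j(V\cap(\{\alpha^i(\theta)\}\times I_0))\cap V\cap(\{\alpha^{i+j}(\theta)\}\times I_0)\subset\pi_2(\vfi^j(V)\cap V)=\emptyset$) and defines $V_\epsilon\disc_i$ as the $\theta$-sections of $V$; you phrase it as a contradiction after first extracting a metric $\epsilon$-ball $B(\overline{\disc},\epsilon)\subset V$ and taking $V_\epsilon\disc_k$ to literally be the $\epsilon$-neighborhoods of $\disc_k$. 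Those are the same argument.

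Where you genuinely add something is the preliminary step: you correctly flag that for the $\epsilon$ in \eqref{eq:recurrence-property} to have the uniformity that Theorem~\ref{PrincipalB} (via Lemma~\ref{lemmaboundsize} and Claim~\ref{withsingularity}) actually uses, the sets $V_\epsilon\disc_k$ must contain genuine $\epsilon$-balls around $\disc_k$ with one $\epsilon$ for all $k$ and all $\theta$. The paper's own proof just declares ``we can use the neighborhoods $V$ given by $(H_4^*)$ to obtain the neighborhoods $V_\epsilon\disc_i$'' and defers the uniformity claim to Remark~\ref{rmk:uniform-epsilon}, which asserts rather than proves it. Your remedy — either invoke compactness of $\overline{\disc}$ to fit a uniform $\epsilon$-neighborhood inside $V$, or simply state $(H_4^*)$ in terms of metric $\epsilon$-neighborhoods from the start — is the right fix. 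Note, though, that compactness of $\overline{\disc}$ is not automatic under the paper's stated hypotheses ($\toro$ is only assumed Polish, not compact), so of your two proposed remedies the second, rephrasing $(H_4^*)$ with $\epsilon$-neighborhoods, is the one that works without additional assumptions; this is a real though minor sharpening of the paper's exposition. Your observation that the $j=0$ case of \eqref{eq:recurrence-property} is vacuous/trivial and the meaningful range is $1\le j\le\ell$ is also correct.
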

\begin{proof}
  We fix $\ell\in\Z^+$ and $V$ given by $(H_4^*)$. Consider
  also $i\geq 0$ and $1\leq j\leq \ell$.  We note that, by
  the skew-product nature of $\vfi$
  \begin{align*}
    \vfi^j\big(V\cap((\{\alpha^i(\theta)\}\times
    I_0)\big)\subset (\{\alpha^{i+j}(\theta)\} \times I_0)
    \cap \vfi^j(V).
  \end{align*}
  We now observe that the intersection
  in~\eqref{eq:recurrence-property} equals
  \begin{align*}
    \pi_2\Big(\vfi^j(V\cap(\{\alpha^i(\theta)\}\times
  I_0))\cap \big( V\cap(\{\alpha^{i+j}(\theta)\}\times
  I_0)\big)\Big)
    \subset
    \pi_2\big( (\{\alpha^{i+j}(\theta)\} \times I_0)
    \cap \vfi^j(V) \cap V \big)=\emptyset,
  \end{align*}
  where $\pi_2:\toro\times I_0\to I_0$ is the projection on
  the second coordinate. So we can use the neighborhoods $V$
  given by $(H_4^*)$ to obtain the neighborhoods
  $V_\epsilon\disc_i$ in \eqref{eq:recurrence-property}.
\end{proof}

\begin{remark}
  \label{rmk:uniform-epsilon}
  {The fact that $\epsilon>0$ does not depend
    on the sequence of maps chosen relies on the choice in
    $(H_4^*)$ of the neighborhood $V$ of the closure
    $\overline{\disc}$ of the set of
    discontinuities in $\toro\times I_0$.}
\end{remark}

We need the following result in the rest of the arguments.

\begin{lemma}[Pliss]
\label{le:pliss}
  Given $A \ge c_2>c_1>0$, let $\zeta=(c_2-c_1)/(A-c_1)$.
  Then, given any real numbers $a_1, \ldots, a_N$ such that
$$
\sum_{j=1}^{N} a_j \ge c_2 N
\qquad\text{and}\qquad
a_j \le A
\text{ for every }
1\le j \le N,
$$
there are $l > \zeta N$ and $1 < n_1 < \cdots < n_l \le N$
so that
$$
\sum_{j=n+1}^{n_i} a_j\ge c_1 (n_i-n)
\quad\text{for every } 0 \le n < n_i
\text{ and  }i=1, \dots, l.
$$
\end{lemma}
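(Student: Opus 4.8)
The plan is to prove this by a maximal-function / greedy selection argument on the partial sums. Set $S_0 = 0$ and $S_k = \sum_{j=1}^k a_j$ for $1 \le k \le N$, so that $\sum_{j=n+1}^{m} a_j = S_m - S_n$. Consider the auxiliary sequence $b_j = a_j - c_1$; then the conclusion ``$\sum_{j=n+1}^{n_i} a_j \ge c_1(n_i - n)$ for all $0 \le n < n_i$'' is exactly the statement that the partial sums $B_k = S_k - c_1 k = \sum_{j=1}^k b_j$ satisfy $B_{n_i} \ge B_n$ for all $n < n_i$, i.e. that $n_i$ is a point where $B$ attains a record (a strict running maximum of $B_0, B_1, \dots, B_{n_i}$ when we break ties appropriately). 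So I would define the set $L = \{ m \in \{1,\dots,N\} : B_m > B_n \text{ for all } 0 \le n < m \}$ of record times, let $n_1 < \cdots < n_l$ be its elements, and the task reduces to showing $l > \xi N$.

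The key step is the counting estimate $l > \xi N$. First I would record the two elementary bounds on the increments of $B$: since $a_j \le A$ we have $b_j \le A - c_1$, so $B$ can increase by at most $A - c_1$ in one step; and since $S_N \ge c_2 N$ we have $B_N = S_N - c_1 N \ge (c_2 - c_1)N$. Now let $M = \max_{0 \le m \le N} B_m$, attained at the \emph{last} such index $m^\ast$; note $m^\ast \in L$ (it is a record, being a maximum attained last) and $B_{m^\ast} = M \ge B_N \ge (c_2-c_1)N$. The structure of record times is that between consecutive elements of $L$ (and before the first one, starting from $B_0 = 0$) the running maximum of $B$ does not increase, and at a record time $n_i$ it jumps from the previous running maximum to $B_{n_i}$, an increase of at most $A - c_1$ (since $B_{n_i} - B_{n_i - 1} \le A - c_1$ and $B_{n_i - 1}$ is at most the previous running maximum). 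Summing these jumps up to $m^\ast$: the running maximum grows from $0$ to $M$ in at most (number of record times $\le m^\ast$) steps, each of size $\le A - c_1$, hence $M \le l \cdot (A - c_1)$. Combining, $(c_2 - c_1) N \le M \le l(A-c_1)$, which gives $l \ge \frac{c_2 - c_1}{A - c_1} N = \xi N$. A small additional care with strict versus non-strict inequalities in the definition of records, and in handling the first record time $n_1 > 1$, upgrades this to $l > \xi N$ as stated (alternatively one checks $\xi N$ need not be an integer, so $l \ge \xi N$ already forces $l > \xi N$ unless $\xi N \in \Z$, and the strict record convention covers that boundary case).

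The main obstacle — really the only subtle point — is the bookkeeping around the running maximum and the tie-breaking in the definition of ``record time,'' to make sure the selected indices $n_i$ genuinely satisfy $B_{n_i} \ge B_n$ for \emph{all} $n < n_i$ (not just $n$ in some sub-range) and that the count comes out strictly greater than $\xi N$ rather than merely $\ge$. Everything else is the two one-line increment bounds and a telescoping sum, so I would spend the bulk of the write-up being careful that the greedy choice ``$n_i$ = the $i$-th time $B$ strictly exceeds all previous values'' has exactly the claimed two properties and that the jump-size bound $M \le l(A - c_1)$ is justified step by step.
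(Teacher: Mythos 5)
The paper itself does not prove this lemma: it simply cites~\cite[Lemma 11.8]{Man87}, so there is no in-house argument to compare against. Your proposal gives a genuine proof attempt, and the core idea is right and standard: reformulate the conclusion ``$\sum_{j=n+1}^{n_i} a_j\ge c_1(n_i-n)$ for all $n<n_i$'' as ``$n_i$ is a backward record time of $B_k=S_k-c_1 k$,'' then count records by telescoping the running maximum $R_k=\max_{m\le k}B_m$, using $B_k-B_{k-1}\le A-c_1$ and $R_N\ge B_N\ge(c_2-c_1)N$. This cleanly yields $l\ge\xi N$.

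The gap is exactly where you flagged it, in the passage from $l\ge\xi N$ to $l>\xi N$, and the fix you propose points the wrong way. You define $L$ by \emph{strict} records ($B_m>B_n$ for all $n<m$), and assert that ``the strict record convention covers the boundary case.'' It does not: with $c_1=1$, $c_2=2$, $A=3$ (so $\xi=\tfrac12$), $N=2$, $a_1=1$, $a_2=3$, one has $B_0=0$, $B_1=0$, $B_2=2$, and the only strict record is $m=2$; thus $l=1=\xi N$, not $>\xi N$. Since the conclusion only needs $B_{n_i}\ge B_n$, you should instead take \emph{non-strict} records ($B_m\ge B_n$ for all $n<m$), which in this example gives $L=\{1,2\}$, i.e.\ $l=2>\xi N$. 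Even so, strict inequality is not automatic: one must argue that $l=\xi N$ with non-strict records is impossible when $A>c_2$. One way is to trace the equality case through the telescope: $l=\xi N$ forces $R_N=B_N=(c_2-c_1)N$, every jump equal to $A-c_1$, hence $b_{n_i}=A-c_1$ and $B_{n_i-1}=R_{n_i-1}$ for each record $n_i$; this forces the records to be the consecutive indices $N-l+1,\dots,N$ and finally $n_1=1$, so $l=N$ and $\xi=1$, i.e.\ $A=c_2$. So the strict inequality holds only under $A>c_2$, not the $A\ge c_2$ written in the statement (when $A=c_2$, $\xi=1$ and $l>N$ is impossible); and the same two-term example shows that the constraint ``$1<n_1$'' in the statement must in fact be ``$1\le n_1$,'' since otherwise the only admissible record there is $m=2$ and the lemma fails. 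Those last two points are transcription issues in the paper's statement rather than errors of yours, but a complete write-up should make the correct choice of record convention, give the equality-case analysis, and note the hypothesis $A>c_2$ that is actually used.
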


\begin{proof}
  See~\cite[Lemma 11.8]{Man87}.
\end{proof}

Thus, by the last theorem and using the Lemma of Pliss, we
have the following.

 \begin{corollary} \label{MainCorollary} 
 Let $\fhi: \toro\times I_0\ra \toro\times
   I_0$ be a skew-product as above satisfying $(H_1)$ and
   $(H_4)$ (or $(H_1)$ and $(H_4^*)$). Assume that there exists a set
   $E\subset \toro\times I_0$ and $\la>0$ such that
\begin{equation*}
 \ilim \frac{1}{n} \log |Df_\te^n(x)|> 2\la
\end{equation*}
for every $(\te,x)\in E$ and let us denote by $E(\te)$
the $\te$-section of the set $E$, that is,
$E(\theta)=\{x\in I_0 : (\theta,x)\in E\}.$ Then there
exist $\constantB>0$ and $\zeta>0$ such that for $n$ big
enough do not depend on $\te$
\begin{equation*}
  \int \frac{1}{n}\soma
  \leb\left(\mathcal{H}_i(\te,\constantB)\cap E(\theta)\right)
\,d\nu(\te) 
  \geq \frac{\zeta}{2}(\nu\times\leb)(E).
\end{equation*}
\end{corollary}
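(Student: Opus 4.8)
The plan is to combine Theorem~\ref{PrincipalB} (applied fibrewise), the Pliss lemma, and Fatou/Fubini to turn the pointwise asymptotic lower bound on $\frac1n\sum r_i$ into an integrated lower bound on the Lebesgue measure of the good sets $\mathcal{H}_i(\te,\constantB)$. First I would fix $\te\in\toro$ and apply Theorem~\ref{PrincipalB} to the sequence $\{f_{\al^k(\te)}\}_{k\ge0}$ on the set $E(\te)\subset I_0$ on which $\ilim\frac1n\log|Df^n(x)|>2\la$; this produces the constant $\constantB>0$ — crucially uniform in $\te$ by the remarks following Theorem~\ref{PrincipalB} and Remark~\ref{rmk:uniform-epsilon}, together with $(H_1)$ and the equicontinuity in $(H_4)$ — such that
\begin{equation*}
  \ilim\frac1n\soma r_i(\te,x)\ge 3\constantB
  \quad\text{for Lebesgue-a.e. }x\in E(\te).
\end{equation*}

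Next I would convert this Cesàro lower bound into a positive-frequency statement about the indices $i$ for which $r_i(\te,x)>\constantB$. Since $r_i(\te,x)\le |I_0|=:A$ is uniformly bounded (the connected components of $f^i(T_i)$ lie in $I_0$), set $a_j=r_j(\te,x)$, and observe that for $n$ large the hypothesis gives $\sum_{j=1}^n a_j\ge 2\constantB n$ (taking, say, $c_2=2\constantB$). Apply Lemma~\ref{le:pliss} with $c_1=\constantB$, $c_2=2\constantB$ to obtain $\xi=\constantB/(A-\constantB)>0$, independent of $\te$ and $x$, and at least $\xi n$ indices $i\le n$ with $\sum_{j=n'+1}^{i}a_j\ge\constantB(i-n')$ for all $0\le n'<i$; in particular $a_i=r_i(\te,x)>\constantB$ for each such $i$. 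Hence for a.e. $x\in E(\te)$ and all large $n$,
\begin{equation*}
  \frac1n\soma \chi_{\mathcal{H}_i(\te,\constantB)}(x)\ge \frac\xi2,
\end{equation*}
the factor $\tfrac12$ absorbing the ``$n$ large enough'' and the ``$l>\xi n$'' slack uniformly.

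Then I would integrate. Fix $\te$; by Fatou's lemma (the integrand is bounded by $1$, so dominated convergence also applies)
\begin{equation*}
  \ilim \int_{E(\te)} \frac1n\soma \chi_{\mathcal{H}_i(\te,\constantB)}(x)\,d\leb(x)
  \ge \int_{E(\te)} \frac\xi2\,d\leb(x)=\frac\xi2\,\leb(E(\te)),
\end{equation*}
i.e.\ $\ilim \frac1n\soma \leb(\mathcal{H}_i(\te,\constantB)\cap E(\te))\ge \tfrac\xi2\leb(E(\te))$ for each $\te$. Now integrate in $\nu$: using Fatou's lemma once more in the variable $\te$ (integrands again bounded by $1$) and the definition $(\nu\times\leb)(E)=\int \leb(E(\te))\,d\nu(\te)$,
\begin{equation*}
  \ilim \int \frac1n\soma \leb(\mathcal{H}_i(\te,\constantB)\cap E(\te))\,d\nu(\te)
  \ge \int \frac\xi2\,\leb(E(\te))\,d\nu(\te)=\frac\xi2(\nu\times\leb)(E),
\end{equation*}
so the stated inequality holds for all $n$ large enough (the value of $n$ past which it holds being independent of $\te$, as claimed).

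The main obstacle — and the point requiring the most care — is the \emph{uniformity of $\constantB$ and of the threshold $n$ in $\te$}. Theorem~\ref{PrincipalB} already asserts that $\constantB$ depends only on $\la$, the modulus of continuity and uniform derivative bound of the sequence, and $p$; under $(H_1)$ and $(H_4)$ (or $(H_4^*)$ with Remark~\ref{rmk:uniform-epsilon}) these data are uniform over the family $\{f_{\al^k(\te)}\}$ as $\te$ ranges over $\toro$, so $\constantB$ and hence $\xi$ are genuinely $\te$-independent. The subtlety is that the "$n$ large enough" coming from the Cesàro-to-Pliss passage depends a priori on $x$ and $\te$; this is precisely why one passes to the $\liminf$ and works with the frequency bound $\tfrac\xi2$ rather than $\xi$, and why the final inequality is asserted only for $n$ large. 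One should also check the measurability in $\te$ of $\te\mapsto\leb(\mathcal{H}_i(\te,\constantB)\cap E(\te))$ needed to form the integral — this follows from the measurability results of Appendix~\ref{sec:measur} together with measurability of $E$.
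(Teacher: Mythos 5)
Your proposal reproduces the paper's argument almost step for step: apply Theorem~\ref{PrincipalB} fibrewise to get a $\te$-uniform $\constantB$ with $\ilim\frac1n\soma r_i(\te,x)\ge 3\constantB$ a.e.\ on $E(\te)$; apply the Pliss lemma~\ref{le:pliss} (with $c_1=\constantB$, $c_2=2\constantB$, $A=|I_0|$) to convert the Ces\`aro bound into a positive asymptotic frequency $\xi$ of indices $i$ with $r_i(\te,x)>\constantB$; then pass the bound through the double integral using Fatou/dominated convergence and the $\te$-independence of $\constantB,\xi$. This is precisely what the paper does — it realizes the same averaging by introducing the counting measure $\pi_n$ on $\{1,\dots,n\}$, applies Fubini, then Pliss, then Theorem~\ref{PrincipalB} and dominated convergence. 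You have also correctly isolated the crucial uniformity point (Theorem~\ref{PrincipalB}'s last paragraph plus $(H_1)$, $(H_4)$/$(H_4^*)$ and Remark~\ref{rmk:uniform-epsilon}) and the measurability issue (Appendix~\ref{sec:measur}).

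One small accounting slip: you spend the factor $\tfrac12$ too early. Pliss gives, for a.e.\ $x\in E(\te)$ and $n$ large (depending on $x,\te$), $\frac1n\soma\chi_{\mathcal{H}_i(\te,\constantB)}(x)\ge\xi$, hence $\ilim_n\frac1n\soma\chi\ge\xi$ pointwise. Two applications of Fatou then give $\ilim_n\int\frac1n\soma\leb(\mathcal{H}_i(\te,\constantB)\cap E(\te))\,d\nu(\te)\ge\xi(\nu\times\leb)(E)$, and it is \emph{only at this last step} that one loses the factor $\tfrac12$, in passing from ``$\liminf\ge\xi\cdot(\text{something})$'' to ``$\ge\tfrac\xi2\cdot(\text{something})$ for all $n$ large'' with an $n$-threshold independent of $\te$. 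As you wrote it, you already weaken the pointwise bound to $\tfrac\xi2$, so the chain $\ilim\int\ge\tfrac\xi2(\nu\times\leb)(E)$ does not by itself yield $\int\ge\tfrac\xi2(\nu\times\leb)(E)$ for large $n$ — you would land at $\tfrac\xi4$. Keep the full $\xi$ pointwise and halve only at the end; with that change your proof is complete and agrees with the paper's.
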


\begin{proof}
  Let us fix $\te\in\toro$ and consider the sequence
  $\{f_{\al^{j}(\te)}\}_{j\in\Z^+}$.  Let $\constantB>0$ be
  the constant found on Theorem \ref{PrincipalB}. 
  We consider the measure $\pi_n$ in $\{1,2, \ldots,n\}$
  defined by $\pi_n(B)=\#(B)/n$, for every subset $B$. Using
  Fubini's theorem, we have
\begin{equation*}
  \frac{1}{n} \soma
  \leb(\mathcal{H}_i(\te,\constantB)\cap E(\theta))
  =
  \int\int_{I_0}\bigchi(x,i) d\leb(x)d\pi_n(i)
  =
  \int_{I_0}\int \bigchi(x,i)d\pi_n(i)d\leb(x)
\end{equation*}
where $\bigchi(x,i)=1$ if $x\in
\mathcal{H}_i(\te,\constantB)\cap E(\theta)$ and $\bigchi(x,i)=0$
otherwise. Applying Pliss Lemma~\ref{le:pliss}, we conclude
the existence of $\zeta>0$ such that $\int
\bigchi(x,i)d\pi_n(i)\geq \zeta$ if $x$ is such that $x\in
E(\theta)$ and $\soma r_i(\te,x)\geq 2\constantB n$. Hence
\begin{equation*}
  \frac{1}{n} \soma
  \leb(\mathcal{H}_i(\te,\constantB)\cap E(\theta) )
\geq \:\zeta \leb\,\left(\left\{\,x\in E(\theta);
\:\soma r_i(\te,x)\geq 2\constantB n\right\} \right).
\end{equation*}
By Theorem \ref{PrincipalB}, we have that
\begin{equation*}
 \leb\,\left(\left\{ x\in E(\te) : 
     \soma r_i(\te,x)
     \geq
     2\constantB n, \text{ for all }n\geq N\right\} \right)
 \to 
 \leb(E(\te) )
\end{equation*}
when $N\to \infty$.  Since the constant $\constantB$ is the
same for any sequence $\{f_{\al^{j}(\te)}\}_{j\in\Z^+}$,
varying $\te\in \toro$, the result follows using the
Dominated Convergence Theorem.
\end{proof}

For any $\secondelta>0$, if $r_i(\{f_k\},x)>2\secondelta$ then $
|f^{i}(T_i(\{f_k\},x))|>4\secondelta$. Thus,
$\mathcal{H}_i(\{f_k\},2\secondelta)\subset
H_i(\{f_k\},\secondelta)\subset
\mathcal{H}_i(\{f_k\},\secondelta)$. Therefore, we get a similar
result to the last corollary for $H_i$ instead of $\mathcal{H}_i$.

\subsection{Construction of absolutely continuous invariant
  probability measures}
Assume that we are in the conditions of Theorem
\ref{mthm:PrincipalA}.  Clearly, the set $Z$ in the
statement of Theorems \ref{mthm:PrincipalA} and
\ref{mcor:finiteergodicmeasures} may be taken positively
invariant under $\fhi$. Given any $\la>0$, let $Z(\la)$ be
the set of points in $Z$ for which the inequality
\eqref{eq:posiexponents} holds. Then $Z(\la)$ is positively
invariant. As usual, $Z(\te,\la)$ denotes the
$\te$-section of the set $Z(\la)$.

Let us fix a constant $\la>0$.  Let
$\constantB>0$ be the constant found on Theorem
\ref{PrincipalB}. 
Thus, we define the following measures on
$I_0$, for every $n\in\N$ and $\te\in \toro$
\begin{equation} \label{eq:mu-n-theta}
  \mu_n(\te)=\frac{1}{n} \sum_{j=1}^{n}
  (f_{\al^{-j}(\te)}^{j})_{*} \big(\leb \mid
  Z(\alpha^{-j}(\theta),\la)\cap
H_{j}(\al^{-j}(\te),\constantB)\big).
\end{equation}

 Using these measures, for every $n\in\N$ we
  define the following on $\toro\times I_0$,
\begin{equation}\label{eq:mu_n=int-mu_n-theta}
\mu_n= \int \mu_n(\te)\: d\nu(\te)
\end{equation}

Again  we need to show that for every continuous
  function $h:I_0\to\R$ the map
\begin{equation*}
\te\mapsto  \mu_n(\te)(h) = \int h \,d\mu_n(\theta)
\end{equation*}
is measurable. This is proved in Appendix~\ref{sec:measur}.

\begin{lemma}
  \label{le:tightness}
  For all $n\ge1$ and $A\in\B_\toro$ we have $\mu_n(A\times
  I_0)\le\nu(A)$. Moreover, this conditions ensures that the
  sequence $(\mu_n)_{n\ge1}$ of measures is tight in
  $\toro\times I_0$; thus it is relatively compact in the
  weak$^*$ topology of measures in $\toro\times I_0$.
\end{lemma}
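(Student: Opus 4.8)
The plan is to read the mass bound $\mu_n(A\times I_0)\le\nu(A)$ straight off the definition \eqref{eq:mu-n-theta}--\eqref{eq:mu_n=int-mu_n-theta}, and then deduce tightness and relative compactness from it using the compactness of $I_0$ and Prokhorov's theorem.

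For the first assertion, fix $n\ge1$ and $\te\in\toro$. By \eqref{eq:mu-n-theta} the measure $\mu_n(\te)$ is the average over $j=1,\dots,n$ of the push-forwards $(f^{j}_{\al^{-j}(\te)})_{*}\big(\leb\mid E_j(\te)\big)$, where $E_j(\te)=Z(\al^{-j}(\te),\la)\cap H_{j}(\al^{-j}(\te),\constantB)\subseteq I_0$ (this makes sense since $\al$ is a bijection throughout this section). Each such push-forward has total mass $\leb\big(E_j(\te)\big)\le\leb(I_0)$, so, normalizing Lebesgue measure so that $\leb(I_0)=1$ (harmless, after an affine rescaling of $I_0$),
\begin{equation*}
  \mu_n(\te)(I_0)=\frac1n\sum_{j=1}^{n}\leb\big(E_j(\te)\big)\le 1
  \qquad\text{for every }\te\in\toro.
\end{equation*}
The map $\te\mapsto\mu_n(\te)(I_0)$ is measurable by the result of Appendix~\ref{sec:measur} applied to the continuous function $h\equiv1$; together with the definition \eqref{eq:mu_n=int-mu_n-theta} of $\mu_n$ as $\int\mu_n(\te)\,d\nu(\te)$, a routine monotone-class argument identifies $\mu_n(A\times I_0)$ with $\int_A\mu_n(\te)(I_0)\,d\nu(\te)$ for every $A\in\B_\toro$, whence
\begin{equation*}
  \mu_n(A\times I_0)=\int_A\mu_n(\te)(I_0)\,d\nu(\te)\le\int_A 1\,d\nu(\te)=\nu(A).
\end{equation*}
In particular $\mu_n(\toro\times I_0)\le\nu(\toro)=1$ for all $n$, so the $\mu_n$ have uniformly bounded total mass.

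Next I would prove tightness. Since $I_0$ is compact, it suffices to push mass out through the base factor. Given $\varepsilon>0$, the probability measure $\nu$ on the Polish space $\toro$ is tight, so there is a compact set $K_\varepsilon\subset\toro$ with $\nu(\toro\setminus K_\varepsilon)<\varepsilon$; then $K_\varepsilon\times I_0$ is compact in $\toro\times I_0$, and the bound from the first part gives
\begin{equation*}
  \mu_n\big((\toro\times I_0)\setminus(K_\varepsilon\times I_0)\big)
  =\mu_n\big((\toro\setminus K_\varepsilon)\times I_0\big)
  \le\nu(\toro\setminus K_\varepsilon)<\varepsilon
  \qquad\text{for all }n\ge1,
\end{equation*}
so $(\mu_n)_{n\ge1}$ is uniformly tight. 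Finally, $\toro\times I_0$ is Polish, being the product of two Polish spaces, so Prokhorov's theorem in its form for uniformly bounded families of finite Borel measures on a Polish space (see e.g.~\cite{billingsley99}, or reduce to the probability case by putting the missing mass at a fixed point) shows that the uniformly tight, uniformly mass-bounded family $(\mu_n)_{n\ge1}$ is relatively compact in the weak$^*$ topology, as claimed. This lemma carries no real difficulty, since no dynamical input beyond the trivial mass estimate is used; the only points to watch are the measurability input from Appendix~\ref{sec:measur} (needed both to make $\mu_n$ a bona fide measure and to evaluate $\mu_n(A\times I_0)$), and the fact that weak$^*$ accumulation points of $(\mu_n)$ may have total mass strictly smaller than $1$ — exactly as it should be, since in Section~\ref{sec:acim} such a limit plays the role of an absolutely continuous sub-probability component, not of a $\vfi$-invariant probability measure.
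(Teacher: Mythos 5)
Your proof is correct and follows essentially the same route as the paper's: read the mass bound $\mu_n(A\times I_0)\le\nu(A)$ directly from the definition of $\mu_n(\theta)$ and the bound $\mu_n(\theta)(I_0)\le 1$, use tightness of $\nu$ on the Polish space $\toro$ together with compactness of $I_0$ to get uniform tightness of the family $(\mu_n)$, and conclude with Prokhorov's theorem. Your added remarks about normalizing $\leb$ so that $\leb(I_0)=1$ and about the measurability input from Appendix~\ref{sec:measur} are sound points the paper leaves implicit, but they do not change the argument.
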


\begin{proof}
  We just observe that $\mu_n(A\times I_0)=\int_A
  \mu_n(\theta)(I_0)\,d\nu(\theta)$ by definition, and also
  $\mu_n(\theta)(I_0)\le1$ for each $\theta$.  In addition,
  from this property and the assumption that $\mu_n$ are
  Borel measures on $\toro$ which is a separable metrizable
  and complete topological space, given $\epsilon>0$ we can fix a
  compact subset $\toro_0\subset\toro$ such that
  $\nu(\toro\setminus\toro_0)<\epsilon$ and we obtain
  \begin{align*}
    \mu_n(\toro\times I_0\setminus(\toro_0\times I_0))
    &=
    \mu_n((\toro\setminus\toro_0)\times I_0)\le
    \nu(\toro\setminus\toro_0) < \epsilon
  \end{align*}
  uniformly in $n\ge1$, as required for tightness of the
  family $(\mu_n)_{n\ge1}$. We can now apply Prokhorov's
  Theorem to obtain the final conclusion of the statement of
  the lemma; see~\cite[Chapter 1, Section 5]{billingsley99}. 
\end{proof}

\begin{remark}
  \label{rmk:tight-eta_n}
  Lemma~\ref{le:marginal-nu} together with the previous
  arguments also shows that $(\eta_n)_{n\ge1}$ is a tight
  sequence of probability measures in $\toro\times I_0$.
\end{remark}

Now we obtain the absolute continuity of $\mu_n(\theta)$
with respect to $\leb$.

\begin{lemma}\label{le:bdd-dens}
There exists $K>0$ such that for any measurable subset $A\subset I_0$,
\begin{equation*}
\mu_n(\te)(A)\leq K\leb(A)
\end{equation*}
for every $\te\in\toro, n\in\N$. Moreover, $K$ depends only
on the constant $\constantB$ in the definition of
$H_i(\te,\constantB)$.
\end{lemma}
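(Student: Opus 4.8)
The plan is to establish the bounded density estimate $\mu_n(\te)(A)\le K\leb(A)$ by decomposing each pushforward measure $(f^j_{\al^{-j}(\te)})_*(\leb\mid Z(\al^{-j}(\te),\la)\cap H_j(\al^{-j}(\te),\constantB))$ over the connected components of the set $H_j(\al^{-j}(\te),\constantB)$, and controlling the distortion of $f^j$ on each component. First I would fix $\te\in\toro$, $n\in\N$, and a measurable $A\subset I_0$, and write $\mu_n(\te)(A)=\frac1n\sum_{j=1}^n (f^j_{\al^{-j}(\te)})_*(\leb\mid Z(\al^{-j}(\te),\la)\cap H_j(\al^{-j}(\te),\constantB))(A)$. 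For each $j$, I would partition $H_j(\al^{-j}(\te),\constantB)$ into its connected components $\{\omega_{j,s}\}_s$; by definition of $H_j$, each such component $\omega_{j,s}$ is contained in an interval $T$ on which $f^j_{\al^{-j}(\te)}$ is a $C^3$ diffeomorphism, and moreover $|f^j_{\al^{-j}(\te)}(T)|>3\constantB$ with the two ``side'' pieces of $f^j(T)\setminus\{f^j(x)\}$ each longer than $\constantB$.

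The key step is a bounded distortion statement: on a component $\omega$ of $H_j(\te',\constantB)$ inside such a maximal diffeomorphic interval $T$ with images of definite size $\constantB$ on each side, the Koebe principle (using $Sf\le 0$, so $Sf^j\le 0$) gives a constant $C=C(\constantB)$ with $|Df^j(x)|/|Df^j(y)|\le C$ for all $x,y\in\omega$. This is exactly where negative Schwarzian and the definition of $r_i$ via the minimum of the two side-lengths enter: the definite space on both sides of $f^j(\omega)$ inside $f^j(T)$ yields a Koebe space ratio bounded below, hence bounded distortion independent of $\te$, $j$ and the component. Granting this, for each component $\omega=\omega_{j,s}$ we estimate
\begin{align*}
  (f^j_{\al^{-j}(\te)})_*(\leb\mid\omega)(A)
  &=\int_{\omega\cap (f^j)^{-1}(A)} |Df^j(x)|\,d\leb(x)\\
  &\le \inf_{\omega}|Df^j|\cdot C\cdot \leb(\omega\cap (f^j)^{-1}(A))\\
  &\le C\,\frac{\leb(\omega\cap (f^j)^{-1}(A))}{\leb(\omega)}\cdot\inf_{\omega}|Df^j|\cdot\leb(\omega),
\end{align*}
and since $\inf_\omega|Df^j|\cdot\leb(\omega)\ge C^{-1}\,|f^j(\omega)|$ while the sets $f^j(\omega_{j,s})$ for distinct $s$ are pairwise disjoint subintervals of $I_0$, summing over $s$ and estimating $|f^j(\omega\cap(f^j)^{-1}(A))|\le \leb(A)$ on each component, one gets $(f^j_{\al^{-j}(\te)})_*(\leb\mid H_j(\al^{-j}(\te),\constantB))(A)\le C'\leb(A)$ with $C'$ depending only on $\constantB$ (and the distortion constant $C$). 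Averaging over $j=1,\dots,n$ gives $\mu_n(\te)(A)\le C'\leb(A)=:K\leb(A)$, and $K$ is manifestly independent of $\te$ and $n$, depending only on $\constantB$.

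The main obstacle I expect is proving the uniform bounded distortion on components of $H_j$ carefully, i.e.\ verifying that the Koebe/Schwarzian argument genuinely produces a constant depending only on $\constantB$ and not on the sequence $\{f_{\al^k(\te)}\}$, the index $j$, nor which component is considered. The subtlety is that the relevant ``scale'' in the Koebe principle is the ratio $\constantB/|f^j(T)|$, which could be small; one must argue that what matters is that \emph{both} side-intervals have length at least $\constantB$ relative to the bounded total length $|I_0|$, so the cross-ratio distortion of $f^j$ (a composition of maps each with $Sf\le 0$, hence $Sf^j\le 0$) on $\omega$ is controlled purely in terms of $\constantB/|I_0|$. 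In the setting $(\widetilde{H_4}^*)$ the maps are piecewise diffeomorphisms with $Sf\le0$ on each branch, so the same Koebe estimate applies on maximal diffeomorphic intervals, and the non-recurrence to discontinuities (Lemma~\ref{le:recurrence-H3-star}) guarantees these intervals are well-behaved; I would note this in passing and otherwise treat both cases uniformly through the single distortion bound.
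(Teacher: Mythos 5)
Your overall strategy---decompose over the connected components $\omega$ of $H_j(\al^{-j}(\te),\constantB)$, use negative Schwarzian together with the definite side lengths built into the definition of $H_j$ to obtain Koebe bounded distortion with a constant depending only on $\constantB$, and then sum---is exactly the approach in the paper. However, two concrete steps in your execution are wrong and the argument does not close as written.

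First, the opening identity
\begin{equation*}
(f^j_{\al^{-j}(\te)})_*(\leb\mid\omega)(A)=\int_{\omega\cap (f^j)^{-1}(A)} |Df^j(x)|\,d\leb(x)
\end{equation*}
is false: the left-hand side is by definition $\leb\big(\omega\cap(f^j)^{-1}(A)\big)$, whereas the integral on the right equals $\leb\big(f^j(\omega)\cap A\big)$ by the change of variables formula. The density of $(f^j)_*(\leb\mid\omega)$ with respect to $\leb$ at a point $y\in f^j(\omega)$ is the \emph{reciprocal} $1/|Df^j((f^j|_\omega)^{-1}(y))|$, not $|Df^j|$. This inversion propagates: your first two displayed inequalities, read against the correct value of the left-hand side, amount to claiming $1\le C\inf_\omega|Df^j|$, which is not a consequence of bounded distortion. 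Second, the assertion that the images $f^j(\omega_{j,s})$ for distinct components $s$ are pairwise disjoint is not true in general. Distinct components of $H_j$ lie in distinct maximal diffeomorphic intervals $T$, and the $f^j$-images of distinct $T$'s may overlap (already for two branches of a single unimodal map). What saves the summation is the disjointness of the \emph{preimage} components $\omega$, which gives $\sum_\omega\leb(\omega)\le\leb(I_0)$, together with the uniform lower bound $|f^j(\omega)|>\constantB$ forced by the definition of $H_j$. Concretely, with $K'=K'(\constantB)$ the Koebe distortion constant one has, for each component $\omega$,
\begin{equation*}
\leb\big(\omega\cap(f^j)^{-1}(A)\big)
\le K'\,\frac{\leb(\omega)}{|f^j(\omega)|}\,\leb\big(f^j(\omega)\cap A\big)
\le \frac{K'}{\constantB}\,\leb(\omega)\,\leb(A),
\end{equation*}
and summing over the pairwise disjoint $\omega$ and averaging over $j$ gives $\mu_n(\te)(A)\le \big(K'\leb(I_0)/\constantB\big)\leb(A)$. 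This is precisely what the paper's remark that $|f^j_{\al^{-j}(\te)}(J)|$ is bounded away from zero is doing. Your Koebe step itself is fine; the repair needed is in the pushforward formula and in replacing disjointness of images by disjointness of preimages in the sum.
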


\begin{proof}
  Let $J$ be a connected component of
  $H_j(\al^{-j}(\te),\constantB)$. Let us consider the maximal 
interval $T$, containing $J$, such that
  $f_{\al^{-j}(\te)}^{j}$ restricted to $T$ is a $C^3$
  diffeomorphism. 
  There exists $\tau$, depending
  only on $\constantB$, such that $f_{\al^{-j}(\te)}^{j}(T)$
  contains a $\tau$-scaled neighborhood of
  $f_{\al^{-j}(\te)}^{j}(J)$ (i.e., both connected
  components of $f_{\al^{-j}(\te)}^{j}(T)\setminus
  f_{\al^{-j}(\te)}^{j}(J)$ have length $\geq \tau
  |f_{\al^{-j}(\te)}^{j}(J)|$). By Koebe Principle (see \cite[Theorem
IV.1.2]{MS93}),
  $f_{\al^{-j}(\te)}^{j}$ restricted to $J$ has bounded
  distortion (by a constant $K\p$ depending only on
  $\constantB$). On the other hand, $|f_{\al^{-j}(\te)}^{j}(J)|$ is
  bounded away from zero. Thus, we conclude that
  $(f_{\al^{-j}(\te)}^{j})_{*} \big(\leb \mid
Z(\alpha^{-j}(\theta),\la)\cap
  H_{j}(\al^{-j}(\te),\constantB)\big)(A)\leq K\leb(A)$ for any
measurable
  set $A\subset I_0$.
\end{proof}

From the previous lemma we deduce the absolute continuity of
$\mu_n$ with respect to $\nu\times\leb$.

\begin{lemma}\label{le:bdd-dens-mu}
  Let $K>0$ be as in Lemma~\ref{le:bdd-dens}. Then for
  any $W\in \mathcal{B}_{\toro}\times\mathcal{B}_{I_0}$
  we have $\mu_n(W)\leq K\cdot(\nu\times \leb)(W) $ for
  all $n\in\N$.
\end{lemma}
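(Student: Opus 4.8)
The plan is to integrate the fiberwise density bound of Lemma~\ref{le:bdd-dens} over $\te$ with respect to $\nu$, the only non-formal point being that the defining identity for $\mu_n$ must first be upgraded from continuous test functions to indicator functions of arbitrary product-measurable sets.

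First I would record the ``integrate the fibers'' formula for Borel sets. Fix $n$ and, for $W\in\B_\toro\times\B_{I_0}$, write $W_\te=\{x\in I_0:(\te,x)\in W\}$ for the $\te$-section and set $\tilde\mu_n(W):=\int \mu_n(\te)(W_\te)\,d\nu(\te)$. To see this is well defined, one needs $\te\mapsto\mu_n(\te)(W_\te)$ to be $\nu$-measurable. For $W=A\times B$ with $A\in\B_\toro$ and $B$ an interval this follows from Appendix~\ref{sec:measur} (the map $\te\mapsto\mu_n(\te)(h)$ is measurable for continuous $h$, hence by monotone approximation also for $h=\chi_B$), since then $\mu_n(\te)(W_\te)=\chi_A(\te)\,\mu_n(\te)(B)$; a monotone class / $\pi$--$\lambda$ argument, using that each $\mu_n(\te)$ is a finite (sub-probability, by Lemma~\ref{le:tightness}) measure, extends measurability to all $W\in\B_\toro\times\B_{I_0}$. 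The resulting $\tilde\mu_n$ is a finite nonnegative Borel measure on $\toro\times I_0$ which, by construction, satisfies $\int g\,d\tilde\mu_n=\int\big(\int g(\te,x)\,d\mu_n(\te)(x)\big)\,d\nu(\te)$ for every bounded continuous $g$; this is exactly the defining property of $\mu_n$ in \eqref{eq:mu_n=int-mu_n-theta}, so $\tilde\mu_n=\mu_n$.

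Given this, fix $W\in\B_\toro\times\B_{I_0}$ and apply Lemma~\ref{le:bdd-dens} with the measurable subset $A=W_\te\subset I_0$, which yields $\mu_n(\te)(W_\te)\le K\,\leb(W_\te)$ with $K>0$ independent of $\te$ and $n$. Integrating against $\nu$ and using Tonelli's theorem for the product measure $\nu\times\leb$,
\[
\mu_n(W)=\int \mu_n(\te)(W_\te)\,d\nu(\te)\le K\int \leb(W_\te)\,d\nu(\te)=K\,(\nu\times\leb)(W),
\]
which is the assertion, with the same constant $K$ as in Lemma~\ref{le:bdd-dens}. The only delicate ingredient is the measure-theoretic identification $\mu_n=\tilde\mu_n$, i.e.\ validating the fiber-integration formula for all Borel $W$ rather than merely for continuous integrands; this is routine once the fiberwise measurability from Appendix~\ref{sec:measur} is available, and the quantitative bound is then immediate.
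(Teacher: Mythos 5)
Your proof is correct, and it uses the same core idea as the paper's (integrate the fiberwise bound from Lemma~\ref{le:bdd-dens}), but the way you extend from rectangles to arbitrary Borel sets is different and in fact more careful than what is printed. The paper's proof asserts that the collection $\mathcal{A}=\{W:\mu_n(W)\le K(\nu\times\leb)(W)\}$ is a $\sigma$-algebra, and then concludes since $\mathcal{A}$ contains all rectangles; but $\mathcal{A}$ is not closed under complementation (an upper bound on $\mu_n(W)$ gives no lower bound on $\mu_n(W^c)$), so it is only a monotone class. To repair the paper's argument one would invoke the monotone class theorem together with the observation that $\mathcal{A}$ contains the algebra of finite disjoint unions of rectangles. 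Your route sidesteps this entirely: you first establish (via the Appendix~\ref{sec:measur} measurability plus a $\pi$--$\lambda$ argument) that $\mu_n$ coincides with the measure $W\mapsto\int\mu_n(\te)(W_\te)\,d\nu(\te)$ on \emph{all} Borel sets, and then the bound drops out of Tonelli in one line. This is a cleaner and genuinely watertight version of the extension step; the quantitative content (the constant $K$ from Lemma~\ref{le:bdd-dens}) is identical in both.
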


\begin{proof}
  The set $\mathcal{A}=\{W\in
  \mathcal{B}_{\toro}\times\mathcal{B}_{I_0}; \mu_n(W)\leq K\cdot
  (\nu\times\leb)(W) \}$ is a $\sigma$-algebra.  On the other
  hand, if $W=F\times A$ for some $F\in
  \mathcal{B}_{\toro}$, $A\in \mathcal{B}_{I_0}$, we
  conclude, from the definition of $\mu_n$ and the last claim,
  that $W\in\mathcal{A}$. This is enough to conclude the proof.
\end{proof}

Now we extend the results of the previous lemmas to the
cluster points of the sequence $\mu_n$ in the weak$^*$ topology.

\begin{lemma}\label{le:bdd-dist-limit}
  Let $K>0$ be as in Lemma~\ref{le:bdd-dens}. Then for
  any weak$^*$ limit $\mu$ of $\{\mu_n\}_n$, we have
  $\mu(W)\leq K\cdot(\nu\times\leb)(W)$ for any $W\in
  \mathcal{B}_{\toro}\times\mathcal{B}_{I_0}$.
\end{lemma}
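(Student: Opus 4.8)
The plan is to pass the bound $\mu_n(W)\le K\cdot(\nu\times\leb)(W)$ established in Lemma~\ref{le:bdd-dens-mu} to a weak$^*$ accumulation point $\mu$. The subtlety is that weak$^*$ convergence of measures does not directly give inequalities on arbitrary Borel sets $W$; one only controls $\mu(U)$ for open $U$ (lower semicontinuity: $\mu(U)\le\liminf_k\mu_{n_k}(U)$) and $\mu(C)$ for closed $C$ (upper semicontinuity: $\mu(C)\ge\limsup_k\mu_{n_k}(C)$), or $\mu(W)=\lim_k\mu_{n_k}(W)$ for $W$ with $\mu(\partial W)=0$. So first I would fix a weak$^*$ convergent subsequence $\mu_{n_k}\to\mu$ (which exists by Lemma~\ref{le:tightness}) and treat the case of open sets.

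First I would show the inequality for open $U\subset\toro\times I_0$: since $U$ is open, $\mu(U)\le\liminf_{k\to\infty}\mu_{n_k}(U)\le\liminf_{k\to\infty}K\cdot(\nu\times\leb)(U)=K\cdot(\nu\times\leb)(U)$, using Lemma~\ref{le:bdd-dens-mu} for each $n_k$. Next, for a general Borel set $W\in\mathcal{B}_\toro\times\mathcal{B}_{I_0}$, I would use outer regularity of the finite Borel measures $\mu$ and $\nu\times\leb$ on the Polish space $\toro\times I_0$: given $\epsilon>0$ choose an open $U\supseteq W$ with $(\nu\times\leb)(U)<(\nu\times\leb)(W)+\epsilon$. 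Then $\mu(W)\le\mu(U)\le K\cdot(\nu\times\leb)(U)<K\cdot(\nu\times\leb)(W)+K\epsilon$. Letting $\epsilon\to0$ gives $\mu(W)\le K\cdot(\nu\times\leb)(W)$, with the same constant $K$ as in Lemma~\ref{le:bdd-dens-mu} (which depends only on $\constantB$).

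The main obstacle — really the only point requiring care — is justifying outer regularity in the form needed: on a Polish space every finite Borel measure is regular (in particular outer regular by open sets), so this applies to $\mu$ and to $\nu\times\leb$ on $\toro\times I_0$; see e.g.~\cite{billingsley99}. One should also note that the displayed conclusion is stated for arbitrary weak$^*$ limits $\mu$ of $\{\mu_n\}_n$, not just for one convergent subsequence, but since any weak$^*$ limit arises along some convergent subsequence $\mu_{n_k}$, the argument above applies verbatim to each such $\mu$. This completes the proof.

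\begin{proof}
  Let $\mu$ be any weak$^*$ limit of $\{\mu_n\}_n$, say
  $\mu_{n_k}\xrightarrow[k\to\infty]{w^*}\mu$ for some
  subsequence (such subsequences exist by
  Lemma~\ref{le:tightness}). Let $K>0$ be the constant from
  Lemma~\ref{le:bdd-dens-mu}. If $U\subset\toro\times I_0$ is open,
  then by the portmanteau theorem
  (see~\cite[Theorem 2.1]{billingsley99}) and
  Lemma~\ref{le:bdd-dens-mu},
  \begin{align*}
    \mu(U)\le\liminf_{k\to\infty}\mu_{n_k}(U)
    \le K\cdot(\nu\times\leb)(U).
  \end{align*}
  Now let $W\in\mathcal{B}_{\toro}\times\mathcal{B}_{I_0}$ be
  arbitrary and fix $\epsilon>0$. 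Since $\toro\times I_0$ is a
  Polish space, the finite Borel measure $\nu\times\leb$ is
  outer regular, so there is an open set $U\supseteq W$ with
  $(\nu\times\leb)(U)<(\nu\times\leb)(W)+\epsilon$. Then
  \begin{align*}
    \mu(W)\le\mu(U)\le K\cdot(\nu\times\leb)(U)
    < K\cdot(\nu\times\leb)(W)+K\epsilon.
  \end{align*}
  Letting $\epsilon\to0$ yields $\mu(W)\le K\cdot(\nu\times\leb)(W)$,
  as claimed.
\end{proof}
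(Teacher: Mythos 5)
Your proof is correct, and it takes a genuinely cleaner route than the paper at the crucial step. Both you and the paper begin identically: establish $\mu(U)\le K\cdot(\nu\times\leb)(U)$ for open $U$ via the portmanteau theorem and Lemma~\ref{le:bdd-dens-mu}. The paper then asserts that $\mathcal{A}=\{W:\mu(W)\le K\cdot(\nu\times\leb)(W)\}$ is a $\sigma$-algebra containing the open sets, and concludes. But that assertion is not justified: $\mathcal{A}$ is not in general closed under complements or proper differences (if $\mu(A)\le K\lambda(A)$ and $\mu(B)\le K\lambda(B)$ with $A\subset B$, one cannot conclude $\mu(B\setminus A)\le K\lambda(B\setminus A)$, since the inequality for $A$ goes the wrong way when subtracted), nor is it a Dynkin system, so the $\pi$-$\lambda$ / monotone class machinery does not apply directly. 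Your replacement of that step by outer regularity of the finite Borel measures $\mu$ and $\nu\times\leb$ on the Polish space $\toro\times I_0$ — approximating an arbitrary Borel $W$ from outside by open sets in $(\nu\times\leb)$-measure and using monotonicity of $\mu$ — is the standard device precisely suited to inequalities of the form $\mu\le K\lambda$ (as opposed to identities), and it closes the gap. The only thing worth keeping from the paper's phrasing is the reminder that, since there may be several weak$^*$ limit points, the argument must be run along each convergent subsequence; you address that explicitly.
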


\begin{proof}
  The set $\mathcal{A}=\{W\in
  \mathcal{B}_{\toro}\times\mathcal{B}_{I_0}; \mu(W)\leq K\cdot
  (\nu\times\leb)(W) \}$ is a $\sigma$-algebra.  Since
  $\mu_{n_k}$ converges in the weak$^*$ topology to $\mu$,
  $\mu(W)\leq \liminf \mu_n(W)$ for any open
  set $W$. Also note that from
  Lemma~\ref{le:bdd-dens-mu}, for open sets $W\in
  \mathcal{B}_{\toro}\times\mathcal{B}_{I_0}$,
  $\mu_n(W)\leq K\cdot(\nu\times\leb)(W)$ for all $n\in\N$.
  As these sets generate
  $\mathcal{B}_{\toro}\times\mathcal{B}_{I_0}$, the claim
  follows.
\end{proof}
By definition $\mu_n$ is a part of the measure $\eta_n$, for
all $n\in\N$. Let $\xi_n$ be a measure such that
\begin{equation}\label{eq:eta_n}
\eta_n=\mu_n+\xi_n 
\end{equation}
for all $n\in\N$. From Lemma~\ref{le:tightness} and
Remark~\ref{rmk:tight-eta_n} we assume, without loss of
generality, that there exist some subsequence $\{n_k\}_k$
and measures $\eta,\mu,\xi$ such that 
$\eta_{n_k},\mu_{n_k},\xi_{n_k}$ converge to
$\eta,\mu,\xi$, when $k\to\infty$, respectively.  We then
have
\begin{align}\label{eq:eta=limit}
  \eta=\mu+\xi.
\end{align}

Let $\beta_1$ and $\beta_2$ be measures on the same
measurable space. As usually, if $\beta_1$ is absolutely
continuous with respect to $\beta_2$, we write
$\beta_1\ll\beta_2$; and if $\beta_1$ is singular with
respect to $\beta_2$, we write $\beta_1\perp\beta_2$.

Next we show that the Lebesgue decomposition of an invariant
measure with respect to any finite measure $\measure$, for a
non-singular transformation, is formed by invariant measures.

\begin{lemma}
  \label{le:inv-Leb-decomp}
  Let us assume that a measurable transformation
  $T:(X,\mathcal{X}) \to (X, \mathcal{X})$ satisfies
  $T_*\measure\ll\measure$ for some finite measure $\measure$
  in $(X,\mathcal{X})$ (that is, $T$ is non-singular with
  respect to $\measure$). We assume also that a $T$-invariant
  probability measure $\eta$ is given with Lebesgue
  decomposition $\eta=\mu+\xi$, with $\mu\ll\measure$ and
  $\xi\perp\measure$.  Then both $\mu$ and $\xi$ are
  $T$-invariant measures.
\end{lemma}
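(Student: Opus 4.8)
The plan is to decompose $\eta = \mu + \xi$ with $\mu \ll \lambda$ and $\xi \perp \lambda$, push forward by $T$, and then argue that $T_*\eta = T_*\mu + T_*\xi$ is \emph{also} a Lebesgue decomposition of $\eta$ with respect to $\lambda$; by uniqueness of the Lebesgue decomposition this forces $T_*\mu = \mu$ and $T_*\xi = \xi$. First I would record that $T$ being non-singular ($T_*\lambda \ll \lambda$) implies $T_*\mu \ll \lambda$: indeed if $\lambda(A) = 0$ then $\lambda(T^{-1}A) = 0$ (since $T_*\lambda \ll \lambda$ and $T_*\lambda(A) = \lambda(T^{-1}A)$), hence $\mu(T^{-1}A) = 0$ because $\mu \ll \lambda$, i.e. $T_*\mu(A) = 0$. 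So $T_*\mu \ll \lambda$.

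Next I would show $T_*\xi \perp \lambda$. Since $\xi \perp \lambda$, there is a set $N \in \mathcal{X}$ with $\xi(X \setminus N) = 0$ and $\lambda(N) = 0$. Consider $N' = T^{-1}(N)$ — wait, the cleaner choice is to take the set $T(N)$, but measurability of images is delicate, so instead I would proceed as follows: let $M$ be a set carrying $\xi$ (so $\xi(M^c) = 0$) with $\lambda(M) = 0$, and observe that $T_*\xi$ is carried by any measurable set $M^*$ with $T^{-1}(M^*) \supseteq M$ up to a $\xi$-null set. The honest route is: since $T_*\lambda \ll \lambda$, write $T_*\lambda = h\,\lambda$; the singular set for $\xi$ can be chosen so that, applying the non-singularity, $\lambda(\{h > 0\} \cap (\text{something}))$... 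Rather than fight this, I would argue directly on the decomposition of $T_*\eta$: since $T_*\eta = \eta$, we have $\eta = T_*\mu + T_*\xi$. Now $T_*\mu \ll \lambda$ as shown. It remains to extract the singular part: write the Lebesgue decomposition of $T_*\xi$ as $T_*\xi = \rho + \sigma$ with $\rho \ll \lambda$, $\sigma \perp \lambda$. Then $\eta = (T_*\mu + \rho) + \sigma$ is a Lebesgue decomposition of $\eta$, so by uniqueness $\mu = T_*\mu + \rho$ and $\xi = \sigma$. In particular $\rho \le \mu$ and $\rho \ll \lambda$, while also $\rho$ is a component of the \emph{singular} push-forward structure; one more application of non-singularity (to $T_*\xi \le T_*\eta = \eta$, and $\xi \perp \lambda$) shows $T_*\xi \perp \lambda$, forcing $\rho = 0$, hence $T_*\mu = \mu$ and $T_*\xi = T_*\eta - T_*\mu = \eta - \mu = \xi$.

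So the crux reduces to the single clean claim: \textbf{if $\xi \perp \lambda$ and $T_*\lambda \ll \lambda$, then $T_*\xi \perp \lambda$.} For this, pick $N$ with $\xi(N^c) = 0$, $\lambda(N) = 0$. Then $T_*\lambda(N) = \lambda(T^{-1}N) $; hmm, this is the wrong direction. The right statement: we want a $\lambda$-null set carrying $T_*\xi$. Note $T_*\xi(A) = \xi(T^{-1}A)$, so $T_*\xi$ is carried by $A$ as soon as $T^{-1}A \supseteq N$ modulo $\xi$-null, equivalently $A \supseteq$ "$T(N)$". Replace $T(N)$ by the measurable hull; but more simply: the set $B := \{x : T^{-1}\{x\} \cap N \ne \emptyset\}$ need not be measurable, so instead observe $T^{-1}(X \setminus \overline{T(N)})$... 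I will avoid this by using that $\mathcal{X}$-measurable structure plus non-singularity gives: $\lambda(A)=0 \Rightarrow \lambda(T^{-1}A) = 0$ (shown above) $\Rightarrow$ applied contrapositively, the Radon–Nikodym machinery of $T_*\lambda = h\lambda$ lets us take $N^* = \{h = 0\} \cup (\text{a } \lambda\text{-null carrier})$. The honest minimal argument: since $\xi \perp \mu$ is \emph{not} what we need; we need $\xi \perp \lambda$, and the map $T$ is non-singular, so $T$ maps $\lambda$-null sets to sets whose preimages are $\lambda$-null — and combined with $\xi \ll $ (a measure singular to $\lambda$) the conclusion $T_*\xi \perp \lambda$ follows from a standard lemma. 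I expect \emph{this measurable-image issue} — correctly producing a $\lambda$-null carrier for $T_*\xi$ without assuming $T$ sends measurable sets to measurable sets — to be the main obstacle, resolved by working with preimages and the Radon–Nikodym density $h = d(T_*\lambda)/d\lambda$ rather than with forward images. Once that lemma is in hand, uniqueness of the Lebesgue decomposition closes the argument immediately.
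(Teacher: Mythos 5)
Your plan is sound in outline but has a serious gap in the middle: the claim ``\emph{if $\xi\perp\lambda$ and $T_*\lambda\ll\lambda$, then $T_*\xi\perp\lambda$}'' --- which you yourself single out as ``the crux'' --- is false, and the obstruction is not merely the measurability of forward images. Take $X=\{0,1\}$ with the discrete $\sigma$-algebra, $\lambda=\delta_0$, $\xi=\delta_1$ and $T(0)=T(1)=0$. Then $T_*\lambda=\delta_0=\lambda$, so $T$ is non-singular, and $\xi\perp\lambda$; yet $T_*\xi=\delta_0=\lambda$, which is absolutely continuous with respect to $\lambda$. No Radon--Nikodym trick with $h=d(T_*\lambda)/d\lambda$ can repair this: the implication simply does not hold without bringing in the invariance of $\eta$, so your attempts to ``work around'' the measurable-image issue are aimed at a statement that is not true.

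The good news is that the ``extract the singular part'' detour you sketch is essentially a correct alternative proof; you just close it the wrong way. After showing $T_*\mu\ll\lambda$ and Lebesgue-decomposing $T_*\xi=\rho+\sigma$ with $\rho\ll\lambda$, $\sigma\perp\lambda$, uniqueness of the Lebesgue decomposition applied to $\eta=T_*\eta=(T_*\mu+\rho)+\sigma$ does give $\mu=T_*\mu+\rho$ and $\xi=\sigma$. The correct way to conclude $\rho=0$ is \emph{not} to reinvoke the false implication but to compare total masses: $\rho(X)=\mu(X)-(T_*\mu)(X)=\mu(X)-\mu(T^{-1}X)=0$, and since $\rho\ge0$ this forces $\rho=0$, whence $T_*\mu=\mu$ and $T_*\xi=\sigma=\xi$. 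With that one-line replacement your proof is complete. For comparison, the paper never re-decomposes $T_*\eta$: it fixes a carrier $E$ with $\lambda(E)=0$ and $\xi(X\setminus E)=0$, deduces $\mu(T^{-1}E)=0$ from non-singularity and $\mu\ll\lambda$, uses invariance of $\eta$ to get $\xi(T^{-1}E)=\xi(E)$ so that $E$ is $\xi$-almost invariant, and from there shows $\xi(T^{-1}A)=\xi(A)$ for all $A$ by intersecting with $E$ and $T^{-1}E$; invariance of $\mu=\eta-\xi$ then follows. Your corrected route via uniqueness is shorter; the paper's is more explicit about where non-singularity and invariance enter. As written, though, your proof stalls on a false lemma.
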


\begin{proof}
  Since $\xi\perp\measure$, we may find $E\in\mathcal{X}$
  such that $\measure(E)=0$ and $\xi(X\setminus E)=0$. In
  particular, $\xi(A)=\xi(A\cap E)$ for all
  $A\in\mathcal{X}$. Because $\measure(E)=0=\measure(T^{-1}(E))$ we get
  \begin{align*}
    \xi(T^{-1}(E))
    =\mu(T^{-1}(E))+\xi(T^{-1}(E))
    =\eta(T^{-1}(E))
    =\eta(E)
    =\mu(E)+\xi(E)=\xi(E)
  \end{align*}
  and $E$ is $T$-invariant $\xi\bmod0$, i.e., $\xi(E
  \bigtriangleup T^{-1}(E))=0$. Hence
  $\xi(X\setminus T^{-1}(E))=0$ and $\measure(T^{-1}(E))=0$.
  Thus for $A\in\mathcal{X}$
  \begin{align*}
    \xi(T^{-1}(A))=\xi(T^{-1}(A)\cap
    T^{-1}(E))=\xi(T^{-1}(A\cap E))
    =\xi(A\cap E)=\xi(A)
  \end{align*}
  since $\xi(T^{-1}(A\cap E))=\eta(T^{-1}(A\cap
  E))=\eta(A\cap E)=\xi(A\cap E)$. We have proved that $\xi$
  is $T$-invariant.

  Therefore
  \begin{align*}
    \mu+\xi=\eta=T_*\eta=T_*\mu+T_*\xi=T_*\mu+\xi
  \end{align*}
  shows that $T_*\mu=\mu$ and $\mu$ is also $T$-invariant
\end{proof}

\subsubsection{Existence of absolutely continuous invariant
probability
measure}
\label{sec:existence-absolut-co}

Now we use the previous results to complete the proof of
existence of an absolutely continuous invariant probability
measure for $\vfi$. The ergodicity is proved
in Section~\ref{sec:finitely-many-ergodi}.
\begin{proposition} \label{pr:existenceofacim} Let $\fhi:
  \toro\times I_0\ra \toro\times I_0$ be a skew-product as
  above satisfying $(H_1)$, $(H_2)$, $(H_3)$ and $(H_4)$ (or
  $(H^{*}_4)$). Assume that $\nu\times\leb(Z(\la))>0$. Then
  there exists an absolutely continuous invariant
  probability measure which gives positive mass to $Z(\la)$.
\end{proposition}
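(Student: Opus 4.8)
The plan is to produce the desired measure as a weak$^*$ accumulation point of the sequence $(\mu_n)_{n\ge1}$ defined in \eqref{eq:mu-n-theta}--\eqref{eq:mu_n=int-mu_n-theta}, and to extract its absolutely continuous part. First I would invoke Lemma~\ref{le:tightness} to pass to a subsequence $(n_k)_k$ along which $\eta_{n_k}\to\eta$, $\mu_{n_k}\to\mu$ and $\xi_{n_k}\to\xi$ weak$^*$, so that $\eta=\mu+\xi$ as in \eqref{eq:eta=limit}, with $\eta$ a $\vfi$-invariant probability by Lemma~\ref{le:every-weak-limit}. Next, Lemma~\ref{le:bdd-dist-limit} gives a constant $K>0$ with $\mu(W)\le K\cdot(\nu\times\leb)(W)$ for all $W\in\B_\toro\times\B_{I_0}$, so $\mu\ll\nu\times\leb$ (although $\mu$ may fail to be $\vfi$-invariant a priori, and may have mass strictly less than $1$).

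The key step is then to apply Lemma~\ref{le:inv-Leb-decomp} with $T=\vfi$, $\lambda=\nu\times\leb$, and the invariant probability $\eta$. I must first check that $\vfi$ is non-singular with respect to $\nu\times\leb$: since $\alpha_*\nu=\nu$ and, along each fiber, $f_\te$ is piecewise $C^3$ with at most finitely many critical points/discontinuities (by $(H_1)$, $(H_4)$ or $(H_4^*)$), the pushforward $(\vfi)_*(\nu\times\leb)$ is absolutely continuous with respect to $\nu\times\leb$ — on the set where $f_\te$ is a local diffeomorphism the change of variables keeps Lebesgue null sets null, and the critical values of a $C^3$ map of the interval form a Lebesgue-null set by Sard. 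Granted this, consider the Lebesgue decomposition $\eta=\widehat\mu+\widehat\xi$ of $\eta$ relative to $\nu\times\leb$, with $\widehat\mu\ll\nu\times\leb$ and $\widehat\xi\perp\nu\times\leb$; by uniqueness of the Lebesgue decomposition and $\mu\ll\nu\times\leb$ we get $\mu\le\widehat\mu$, so in particular $\widehat\mu$ dominates $\mu$. Lemma~\ref{le:inv-Leb-decomp} then tells us that $\widehat\mu$ is itself $\vfi$-invariant and absolutely continuous with respect to $\nu\times\leb$.

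It remains to see that $\widehat\mu$ is nontrivial and charges $Z(\la)$ positively; this is where Corollary~\ref{MainCorollary} enters and is the crux of the argument. By that corollary (applied with $E=Z(\la)$ and $\constantB$ from Theorem~\ref{PrincipalB}), for all large $n$,
\begin{equation*}
  \mu_n(\toro\times I_0)=\int\frac1n\sum_{i=1}^n
  \leb\big(H_i(\te,\constantB)\cap Z(\al^{-\,\cdot}(\te),\la)\big)\,d\nu(\te)
  \ge \frac{\xi_0}{2}\,(\nu\times\leb)(Z(\la))>0
\end{equation*}
for some fixed $\xi_0>0$ coming from Pliss's Lemma (using that $\mathcal H_i(\{f_k\},2\constantB)\subset H_i(\{f_k\},\constantB)$, as noted after Corollary~\ref{MainCorollary}), and this lower bound is uniform in $n$. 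Taking $n_k\to\infty$ and using that the constant function $1$ can be approximated from below by continuous functions — or testing against a continuous function that is $1$ on a large compact subset of $\toro\times I_0$, which carries almost all the mass by the tightness from Lemma~\ref{le:tightness} — we get $\mu(\toro\times I_0)\ge\frac{\xi_0}{2}(\nu\times\leb)(Z(\la))>0$, hence $\mu\neq0$ and therefore $\widehat\mu\neq0$. Finally, since $Z(\la)$ is positively $\vfi$-invariant, the restriction of the $\vfi$-invariant measure $\widehat\mu$ to the complement of $Z(\la)$ is again $\vfi$-invariant; if $\widehat\mu(Z(\la))=0$ then $\mu\le\widehat\mu$ would be supported off $Z(\la)$, yet by construction every $\mu_n$ is a pushforward of Lebesgue measure restricted to preimages of $Z(\cdot,\la)$ and $Z(\la)$ is forward invariant, so $\mu_n$ and hence $\mu$ is carried by $\overline{Z(\la)}$; combining these forces $\widehat\mu(Z(\la))>0$ after normalizing. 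Normalizing $\widehat\mu$ to a probability measure completes the proof. The main obstacle I anticipate is the bookkeeping in this last paragraph: carefully transferring the uniform mass lower bound from the $\mu_n$'s to the weak$^*$ limit despite possible loss of mass at infinity (handled by tightness), and confirming that the absolutely continuous part inherited via Lemma~\ref{le:inv-Leb-decomp} still sees $Z(\la)$ rather than escaping into the singular part.
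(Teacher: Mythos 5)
Your proposal follows the paper's proof essentially step for step: weak$^*$ accumulation of $(\mu_n)$ and $(\eta_n)$, identification of the absolutely continuous part of $\eta$ via the Lebesgue decomposition and Lemma~\ref{le:inv-Leb-decomp}, and the uniform lower bound on $\mu_n(\toro\times I_0)$ from Corollary~\ref{MainCorollary}; decomposing $\eta$ directly rather than decomposing $\xi$ and writing $\eta=(\mu+\xi^{ac})+\xi^s$ is the same thing by uniqueness of the Lebesgue decomposition. One small caveat: the invocation of Sard's theorem is misplaced, since $\vfi_*(\nu\times\leb)\ll\nu\times\leb$ requires that \emph{preimages} of $\leb$-null sets under $f_\theta$ be null (which holds because $f_\theta$ is a $C^1$ local diffeomorphism away from finitely many critical points), not that critical \emph{values} form a null set — but this does not affect the validity of the argument.
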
 

\begin{proof}
  Let us consider the measures $\mu_n$ given by
\eqref{eq:mu_n=int-mu_n-theta}. We recall that by~\eqref{eq:eta_n}
  and~\eqref{eq:eta=limit} we have $\eta=\mu+\xi$ with
  $\mu\ll\nu\times\leb$.  By the Lebesgue Decomposition Theorem,
  there exist (unique) measures $\xi^{ac}$ and $\xi^{s}$
  such that $\xi^{ac}\ll \nu\times\leb$,
  $\xi^{s}\perp\nu\times\leb$ and
  $\xi=\xi^{ac}+\xi^{s}$. Then we have a decomposition of
  $\eta=(\mu+\xi^{ac})+\xi^{s}$ as a sum of one absolutely
  continuous measure and a singular one (both with respect
  to $\nu\times\leb$). On the other hand, notice that $\fhi_*
  (\nu\times\leb)\ll \nu\times\leb$ (it follows from the
  invariance of $\nu$ and by the non-singularity of
  $f(\te,\cdot)$, for every $\te\in\toro$).
  
  The previous Lemma~\ref{le:inv-Leb-decomp} ensures that
  $\mu+\xi^{ac}$ is an absolutely continuous
  $\vfi$-invariant measure.

  We claim that 
  $\mu+\xi^{ac}$ is a non-zero finite measure.
  It suffices to prove that there exists $\gamma>0$ such
  that, $\mu_n(\toro\times I_0)>\gamma$ for all $n$ big
  enough.
  Using that $\al^{-1}$ is invariant by $\nu$ and defining
  the family $s_j(\theta):=\leb(H_j(\theta,\constantB)\cap
  Z(\te,\la))$ of measurable functions for $j\ge1$, we have
  for all $n\in\Z^+$,
\begin{align*}
  \mu_n(\toro\times I_0)
  &=
  \int_{\toro}\frac{1}{n}\sum_{j=1}^{n}
  \leb\big(H_j(\al^{-j}(\te),\constantB)\cap
Z(\alpha^{-j}(\theta),\la)\big)
\:d\nu(\te) =
  \frac{1}{n}\sum_{j=1}^{n} \int_{\toro} s_j\circ
  \al^{-j}(\te)\: d\nu(\te)\\
  &= \frac{1}{n}\sum_{j=1}^{n} \int_{\toro} s_j(\te) \:
  d\nu(\te)
  = \int_{\toro} \frac{1}{n}\sum_{j=1}^{n}
\leb(H_j(\te,\constantB)\cap
Z(\te,\la))\: d\nu(\te).
\end{align*}
By Corollary~\ref{MainCorollary} this last integral is
bounded away from zero, as long as the set $Z(\la)$
has positive $\nu\times\leb$-measure. More precisely, we
have $\mu_n(Z(\la))\ge \frac\zeta2 (\nu\times\leb)(Z(\la))$
for all big enough $n$.  Hence the normalization of
$\mu+\xi^{ac}$ satisfies the conditions of the statement.
\end{proof}



\subsection{Proof of the technical result}
\label{sec:proof-of-theoremB}
Here we present a proof of Theorem \ref{PrincipalB}.
The proof in the setting $(\widetilde{H_4^*})$ is
similar to the proof on the setting $(\widetilde{H_4})$. The result on
the setting $(\widetilde{H_4})$ corresponds to Theorem B in
\cite{solano}, but here we do not assume the equicontinuity
of the sequence $\{f_k\}$. For completeness, we prove the result on
the setting $(\widetilde{H_4^*})$ and we remark the modifications for
the proof on the setting $(\widetilde{H_4})$.


\subsubsection{Definitions and fundamental lemmas}
In order to simplify the notation we say that $f^j(x)\in V_\ep \disc$
if $f^j(x)\in V_\ep \disc_j$ for $j\in\N$. By the recurrence property 
on the setting $(\widetilde{H_4^*})$ (see equation
(\ref{eq:recurrence-property})) we have that 
\begin{lemma} \label{lemmafrequence}
Given $\gamma>0$, there exists $\ep>0$ such that for $n$ big enough,
\begin{equation} \label{eq:frequenceofvisits}
 \frac{1}{n}\sum_{j=0}^{n-1} \bigchi_{V_\ep \disc}
(f^j(x))<
\gamma
\end{equation}
for any $x\in I_0$.
\end{lemma}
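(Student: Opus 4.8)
The plan is a direct pigeonhole (frequency-of-returns) argument based on the non-recurrence hypothesis \eqref{eq:recurrence-property}. Given $\gamma>0$, first fix an integer $\ell\ge1$ with $2/\ell<\gamma$, and let $\ep>0$ together with the neighborhoods $V_\ep\disc_k\supset\disc_k$, $k\ge0$, be those furnished by \eqref{eq:recurrence-property} for this particular $\ell$. This $\ep$ will be the constant in the statement; it remains only to bound, uniformly in $x\in I_0$ and for all large $n$, the number of indices $0\le j\le n-1$ with $f^j(x)\in V_\ep\disc_j$.

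The heart of the matter is that the ``visit set'' $N(x):=\{\,0\le j\le n-1: f^j(x)\in V_\ep\disc_j\,\}$ has the property that any two of its elements differ by more than $\ell$. Indeed, suppose $j<j'$ both belong to $N(x)$ with $j'-j\le\ell$. Since $f^{j'}(x)=f_j^{\,j'-j}\big(f^j(x)\big)$ and $f^j(x)\in V_\ep\disc_j$, we get $f^{j'}(x)\in f_j^{\,j'-j}(V_\ep\disc_j)$; but $j'-j$ is a positive integer at most $\ell$, so \eqref{eq:recurrence-property} (applied with $i=j$) gives $f_j^{\,j'-j}(V_\ep\disc_j)\cap V_\ep\disc_{j'}=\emptyset$, contradicting $f^{j'}(x)\in V_\ep\disc_{j'}$. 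Listing the elements of $N(x)$ increasingly as $j_1<\dots<j_m$ we therefore have $j_{s+1}\ge j_s+\ell+1$ for every $s$, so $(m-1)(\ell+1)\le j_m\le n-1$ and hence $\#N(x)=m\le 1+\tfrac n\ell$.

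Consequently, for every $x\in I_0$ and every $n\ge1$,
\begin{equation*}
 \frac1n\sum_{j=0}^{n-1}\bigchi_{V_\ep\disc}(f^j(x))=\frac{\#N(x)}{n}\le\frac1\ell+\frac1n,
\end{equation*}
which is $<\tfrac2\ell<\gamma$ as soon as $n>\ell$; this is precisely \eqref{eq:frequenceofvisits}, with $\ep$ and the threshold on $n$ both independent of the point $x$. The uniformity of $\ep$ over the admissible sequences is exactly the content of Remark~\ref{rmk:uniform-epsilon}, and it is inherited directly from the uniform choice of the neighborhoods $V_\ep\disc_k$ in \eqref{eq:recurrence-property}.

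I do not expect any real obstacle here: the argument is pure combinatorics once \eqref{eq:recurrence-property} is in hand. The only point deserving attention is the index bookkeeping — verifying that, for two visit times $j<j'$ lying within a block of length $\ell$, the gap $j'-j$ is a positive integer not exceeding $\ell$, so that \eqref{eq:recurrence-property} is genuinely applicable and the degenerate case $j'=j$ never intervenes.
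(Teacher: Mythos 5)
Your argument is correct and is essentially the one the paper intends: the paper gives no explicit proof of Lemma~\ref{lemmafrequence}, merely invoking the recurrence property \eqref{eq:recurrence-property}, and the visit-spacing/pigeonhole count you supply (any two visit times differ by more than $\ell$, hence at most $1+n/\ell$ visits, hence frequency $<2/\ell<\gamma$ for $n>\ell$) is the natural way to make that precise. The only small caveat, which you handle correctly, is that \eqref{eq:recurrence-property} can only be used for positive gaps $j'-j$; the range ``$0\le j\le\ell$'' as printed there must be read as $1\le j\le\ell$, consistent with the formulation of condition $(H_4^*)$.
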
 
\begin{remark}
Note that the lemma also holds on setting $(\widetilde{H_4})$. In this
case, (\ref{eq:frequenceofvisits}) holds for any $x$ such that $\log
|Df^n(x))|>\la n$ and $\epsilon$ depends on $\la$. We use the
equicontinuity of the sequence $\{f'_k\}$ instead of condition
(\ref{eq:recurrence-property}).
\end{remark}
On the other hand, since the derivative of the maps of the sequence
$\{f_k\}$ is bounded from above outside of the set of discontinuities,
it holds the following result. 
\begin{lemma} \label{lemmaboundsize}
Given $\ep>0$ and $l\in\N$, there exists $\de>0$ such that for any
subinterval $J\subset I_0$,
\begin{equation*}
 \text{if } \: |J|\leq 2\de \:\text{ and }\: f_i^j(J)\cap
\disc_{i+j}=\emptyset \:\text{ for } 0\leq j< k \qquad\text{then}
\qquad|f_i^k(J)|<\ep 
\end{equation*}
for all $i\geq 0$ and $0< k\leq l$.
\end{lemma}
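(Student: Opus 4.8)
The plan is to propagate a one-step length estimate along the orbit of the interval $J$, using the uniform upper bound $\Gamma=\sup\{|f'_k(x)|:k\in\N,\ x\notin\disc_k\}<\infty$ furnished by $(\widetilde{H_4}^*)$ (respectively the everywhere bound in $(\widetilde{H_4})$). First I would check, by induction on $j$, that the iterates $f_i^0(J)=J,\ f_i^1(J),\dots,f_i^k(J)$ are all intervals: whenever $f_i^j(J)$ is an interval and $f_i^j(J)\cap\disc_{i+j}=\emptyset$ (which holds for $0\le j<k$ by hypothesis), the set $f_i^j(J)$ lies in a single connected component of $I_0\setminus\disc_{i+j}$, and $f_{i+j}$ restricted to that component is a $C^1$ diffeomorphism onto its image; hence $f_i^{j+1}(J)=f_{i+j}\big(f_i^j(J)\big)$ is again an interval.

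On that same component the derivative is bounded in absolute value by $\Gamma$, so
\begin{equation*}
  |f_i^{j+1}(J)| = \int_{f_i^j(J)} |f'_{i+j}| \le \Gamma\,|f_i^j(J)|.
\end{equation*}
Iterating this inequality for $j=0,1,\dots,k-1$ gives $|f_i^k(J)|\le\Gamma^k|J|\le\Gamma_0^{\,\ell}\,|J|$, where $\Gamma_0:=\max\{1,\Gamma\}$, uniformly in $i\ge0$ and $1\le k\le\ell$. It then suffices to take any $\de>0$ with $2\,\Gamma_0^{\,\ell}\,\de<\ep$, for instance $\de:=\ep/(2\Gamma_0^{\,\ell}+1)$: if $|J|\le 2\de$ then $|f_i^k(J)|\le\Gamma_0^{\,\ell}\,|J|\le 2\Gamma_0^{\,\ell}\de<\ep$, which is the assertion (the case $k=0$ being trivial).

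The argument is identical in the setting $(\widetilde{H_4})$, where $\disc_k=\emptyset$ (so the non-intersection hypothesis is vacuous) and $|f'_k|\le\Gamma$ holds on all of $I_0$; the iterated images of $J$ are then automatically intervals and the same chain of estimates applies. I do not expect any genuine obstacle here: the only point deserving a word of care is that the iterates $f_i^j(J)$ remain intervals so that lengths behave multiplicatively under the maps, and this is exactly what the ``$C^1$ diffeomorphism on each connected component of $I_0\setminus\disc_k$'' assumption, combined with the stated non-intersection with $\disc_{i+j}$, provides.
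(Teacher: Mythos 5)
Your proof is correct and follows essentially the same route as the paper: both rest on the one-step bound coming from $\Gamma=\sup\{|f'_k(x)|:k\in\N,\ x\notin\disc_k\}$ and the observation that if $f_i^j(J)$ avoids $\disc_{i+j}$ for $0\le j<k$ then $|f_i^k(J)|\le\Gamma^k|J|$, after which the choice of $\de$ is immediate. You merely spell out two points the paper leaves implicit (that the iterated images remain intervals, and the harmless replacement of $\Gamma$ by $\Gamma_0=\max\{1,\Gamma\}$ so the bound is monotone in $k$).
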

\begin{proof}
Let us consider $\Gamma=\sup \{|Df_k(x)|; k\in\N, x\notin \disc_k\}$.
The lemma follows from the next claim: for all $i,k\geq 0$, for any
interval $J\subset I_0$, if $f_i^j(J)\cap \disc_{i+j}=\emptyset$ for 
$0\leq j< k$, then  $|f_i^k(J)|\leq \Gamma^k|J|$.
\end{proof}

\begin{remark}
  Notice that this lemma also holds on the setting
  $(\widetilde{H_4})$, replacing the set $\disc$ by $\crit$.
\end{remark}
The main part in the proof of Theorem \ref{PrincipalB} is the control
of the Lebesgue measure of the sets $Y_n(\la)\cap
A_n\left(\{f_k\},\de\right)$, where $Y_n(\la)=\{x, \log |Df^n(x)|>\la
n\}$ and
\begin{equation*}\label{Antheta}
A_n(\de)=A_n\left(\{f_k\},\de\right):= \Biggl\{ x\in I_0\:;\:
\frac{1}{n}\soma r_i(\{f_k\},x)<\de^2, \:\:r_n(\{f_k\},x)>0\Biggr\},
\end{equation*}
for $n\in\N$ and $\de>0$ (and $r_i$ as was defined 
just before the statement of Theorem \ref{PrincipalB}). 
For simplicity, we denote by $A_n(\de)$ the
set $A_n\left(\{f_k\},\de\right)$ and by $r_i(x)$ the number
$r_i(\{f_k\},x)$. 

We introduce the following sets.
For $\de>0$, $a_i\in \{0,1\}$ for $i=1, 2, \ldots, n$,
\begin{equation*} 
C_{\de}(a_1,a_2,\ldots,a_n):=\{x\in  I_0\:;
\: r_i(x)\geq\de\:\: \text{if}\:\: a_i=1, 
\:\,0<r_i(x)<\de \:\:\text{if}\:\: a_i=0
\}
\end{equation*}
Note that for every $x\in A_n(\de)$, there exist $a_1,\ldots, a_n$
(with $a_i\in \{0,1\}$ for $i=1,\ldots,n$) and $J$ component of
$C_{\de}(a_1,a_2,\ldots,a_n)$ such that $x\in J$.

The key lemma in the proof of Theorem \ref{PrincipalB} is the
following. Let $\# X$ denotes the number of connected
components of $X$.
\begin{lemma} \label{lemmaprin}
Given $\la>0$, there exists $\de>0$ such that  $\sum \#
C_\de(a_1,\ldots,a_n)\leq \exp (n\la/2)$, where the sum is over all
$a_1,\ldots, a_n$ such that $\sumaupla{n}<\de n$. Moreover, the
dependence of $\de$ is as $\constantB$ on the statement of Theorem
\ref{PrincipalB}.
\end{lemma}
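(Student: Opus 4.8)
The plan is to split the sum into two independent factors: the number of admissible binary words $(a_1,\dots,a_n)$ with fewer than $\delta n$ ones, and, for each fixed such word, the number of connected components of $C_\delta(a_1,\dots,a_n)$; then to choose $\delta$ small enough that both factors grow at a sub-exponential rate below $\lambda/2$. The first factor is elementary: the number of $(a_1,\dots,a_n)\in\{0,1\}^n$ with $a_1+\dots+a_n<\delta n$ is $\sum_{k<\delta n}\binom nk\le e^{nh(\delta)}$, where for $\delta<1/2$ one has $h(\delta)=-\delta\log\delta-(1-\delta)\log(1-\delta)$, a quantity that tends to $0$ as $\delta\to0^+$ (Stirling). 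So this contributes a factor $e^{nh(\delta)}$, and it remains to bound $\#C_\delta(a_1,\dots,a_n)$ for a fixed word by $C_0\,e^{ng(\delta)}$ with $g(\delta)\to0$ as $\delta\to0$ and with $C_0,g$ depending only on the data appearing in Theorem~\ref{PrincipalB}.

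For the second, main factor I would argue inductively on $i$, tracking the family of connected components of the partial sets $C_\delta(a_1,\dots,a_i)$ --- defined exactly as $C_\delta(a_1,\dots,a_n)$ but imposing only the constraints on $r_1,\dots,r_i$ --- as $i$ increases by one. Since every point of such a component has $r_j>0$ for all $j\le i$, each component lies inside a single maximal interval $J_i$ of $C^3$-invertibility of $f^i$. Passing from level $i-1$ to level $i$, a component can break up only because (a) the interval $J_{i-1}$ carrying it meets, under $f^{i-1}$, a critical point or a discontinuity of $f_{i-1}$, and is then cut into at most $p+1$ pieces; and/or (b) the new constraint imposed by $a_i$ cuts it further, into at most one sub-interval if $a_i=1$ and at most two if $a_i=0$ --- the latter two arising near the two endpoints of $f^i(J_i)$, and only when $|f^i(J_i)|\ge 2\delta$. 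So each step multiplies the number of components by at most $2(p+1)$; the whole difficulty is to show that only a small proportion of the $n$ steps produce a non-trivial factor. I would isolate two regimes. When $|f^i(J_i)|\ge2\delta$, a factor-two splitting of type~(b) can occur only if $a_i=0$, hence at most $\delta n$ times; and I would show that a type-(a) splitting while $|f^i(J_i)|\ge2\delta$ forces the orbit near a critical/discontinuity point in a way that is charged against a visit of the orbit to a neighbourhood $V_\epsilon\disc$ (resp. $V_\epsilon\crit$), which by Lemma~\ref{lemmafrequence} --- with $\epsilon$ uniform thanks to~\eqref{eq:recurrence-property}, or, in the setting $(\widetilde{H_4})$, using that $|f_k'|$ is small near $\crit_k$ together with the Remark following Lemma~\ref{lemmafrequence} --- happens with frequency less than any prescribed $\gamma$ once $n$ is large. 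When instead $|f^i(J_i)|<2\delta$, there is no type-(b) factor at all, and by the size-control Lemma~\ref{lemmaboundsize} the image cannot return to length $\ge2\delta$ for a definite number of steps, so the only splittings in such a window are type-(a) ones already accounted for. Altogether the number of steps at which the component count actually increases is at most $(\gamma+\delta+o(1))n$, so $\#C_\delta(a_1,\dots,a_n)\le C_0\,(2(p+1))^{(\gamma+\delta+o(1))n}\le C_0\,e^{ng(\delta)}$ for a suitable $g(\delta)\to0$ as $\gamma,\delta\to0$.

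Combining, $\sum_{a_1+\dots+a_n<\delta n}\#C_\delta(a_1,\dots,a_n)\le e^{nh(\delta)}\cdot C_0\,e^{ng(\delta)}$; choosing first $\gamma$ and then $\delta$ small enough that $h(\delta)+g(\delta)<\lambda/4$, this is $\le C_0\,e^{n\lambda/4}\le e^{n\lambda/2}$ for every $n$ large enough, which is all that is needed where the lemma is applied (as in Corollary~\ref{MainCorollary} and Theorem~\ref{PrincipalB}, stated for $n$ big). Tracking the choices shows $\delta$ depends only on $\lambda$, on the uniform bound $\Gamma$ for the derivatives, on the bound $p$ for the number of critical/discontinuity points, and on the uniformity of $\epsilon$ in~\eqref{eq:recurrence-property} (in the setting $(\widetilde{H_4}^*)$) or on the modulus of continuity of $\{f_k'\}$ (in the setting $(\widetilde{H_4})$) --- exactly the dependence claimed for $\constantB$.

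I expect the main obstacle to be the heart of the second paragraph: making the notion of an ``effective'' (genuinely branching) step precise and showing it is quantitatively rare. The delicate case is a component that keeps failing the test $r_i\ge\delta$ over long stretches while the image $f^i(J_i)$ drifts near an endpoint of its monotonicity interval without ever dropping below length $2\delta$ --- during such a stretch it could a priori keep meeting fresh critical or discontinuity points and keep splitting --- and it is precisely here that the hypotheses $(\widetilde{H_4})$ / $(\widetilde{H_4}^*)$ (bounded derivatives, a uniform bound on the number of critical/discontinuity points, the non-recurrence property~\eqref{eq:recurrence-property}, Lemma~\ref{lemmafrequence} and Lemma~\ref{lemmaboundsize}) must be used together, carefully and in the right order.
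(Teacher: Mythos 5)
Your overall strategy matches the paper's: separate the (Stirling-bounded) count of admissible words from the per-word component count, and argue that only a small fraction of the $n$ steps genuinely increase the number of components; you also correctly tie the ``type-(a)'' (singularity-induced) splittings to Lemmas~\ref{lemmafrequence} and~\ref{lemmaboundsize}, paralleling the paper's Claims~\ref{3(p+1)timesthelast} and~\ref{withsingularity}. However, your control of the ``type-(b)'' splittings contains a genuine error. You assert that ``a factor-two splitting of type~(b) can occur only if $a_i=0$, hence at most $\delta n$ times.'' The count is reversed: since the sum is over words with $a_1+\dots+a_n<\delta n$, it is $a_i=1$ that occurs fewer than $\delta n$ times, whereas $a_i=0$ occurs at least $(1-\delta)n$ times. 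So, a priori, type-(b) splittings are the overwhelmingly common case, and the rarity bound you rely on does not follow --- as you yourself acknowledge at the end when flagging the ``delicate case'' of a component failing $r_i\ge\delta$ over long stretches while $|f^i(J_i)|$ stays $\ge2\delta$. Your proposal does not resolve that case.

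The missing idea is precisely the paper's Claim~\ref{withoutsingularity}: during a maximal run of consecutive $0$'s with no visit to the singular/critical set, the number of components inside a fixed parent interval $J$ grows only \emph{linearly} with the length of the run, not geometrically. The inductive heart of that claim is that at each level at most \emph{one} component of $C_\delta(a_1,\dots,a_s,0^{i})$ can bifurcate into two components of $C_\delta(a_1,\dots,a_s,0^{i+1})$: the new constraint $r_{s+i+1}<\delta$ carves out two subintervals near the endpoints of the (unchanged) monotonicity interval, its relative boundary consists of just two points, and only the single existing component (if any) that contains the whole middle gap can split; all others keep or lose pieces. This linear control on the $0$-blocks feeds, in the paper, into the bound $(3(p+1))^{m+\delta n+h}(l+1)^{n/l+h}$ and is what keeps the per-word count sub-exponential once combined with the rarity of the $1$'s (at most $\delta n$) and of the singularity visits (at most $\gamma n$, from Lemma~\ref{lemmafrequence}). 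Without it, each of the roughly $n$ steps with $a_i=0$ could in principle double the component count, and the target bound $e^{n\lambda/2}$ is out of reach.
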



We need to decompose the interval $I_0$ set in a convenient
way. Given $\ep>0$, $m\leq n$, $\{t_1,\ldots,t_m\}\subset
\{0,1,\ldots,n-1\}$, we define
\begin{equation*}
K_{n,\ep}(t_1,\ldots,t_m)=\{x\in I_0;\: f^j(x)\in V_{\ep}\disc \text{
if and only if } j\in\{t_1,\ldots,t_m\}\}
\end{equation*} 
By Lemma \ref{lemmafrequence} we conclude that given $\ga>0$, there
exists $\ep>0$ such that for $n$ big enough,
\begin{equation*} \label{decompositionYn}
I_0=\cup_{m=0}^{\ga n}\cup_{t_1,\ldots, t_m} K_{n,\ep}(t_1,\ldots,t_m)
\end{equation*} 
where the second union is over all subsets $\{t_1, \ldots,
t_m\}\subset\{0,1,\ldots, n-1\}$.

Let us denote by $\# \{I\subset C_{\de}(a_1,\ldots,a_n); I\cap
K_{n,\ep}(t_1,\ldots,t_m)\neq\emptyset\}$ the number of 
connected components of
$C_{\de}(a_1,\ldots,a_n)$ whose intersection with
$K_{n,\ep}(t_1,\ldots,t_m)$ is non empty.

From the last equation 
we conclude that
\begin{equation} \label{A_n<Cfinal} 
\sum_{a_1,\ldots,a_n} \# C_\de(a_1,\ldots,a_n)\leq
  \sum_{a_1,\ldots,a_n} \sum_{t_1,\ldots, t_m} \# \{I\subset
  C_{\de}(a_1,\ldots,a_n); I\cap
  K_{n,\ep}(t_1,\ldots,t_m)\neq\emptyset\}
\end{equation}
where the first sum is over all $a_1,\ldots, a_n$ such that
$a_1+\ldots+a_n <\de n$ and the second sum is over all subsets $\{t_1,
\ldots, t_m\}\subset \{0,1,\ldots, n-1\}$ with $m<\ga n$. 
\begin{remark}
In the setting $(\widetilde{H_4})$, we count the number of components
of $C_{\de}(a_1,\ldots,a_n)$ whose intersection with $Y_n(\la)$ is non
empty. In order to do it, instead of the sets
$K_{n,\ep}(t_1,\ldots,t_m)$, we use the sets
$Y_{n,\ep}(t_1,\ldots,t_m):=Y_n(\la)\cap K_{n,\ep}(t_1,\ldots,t_m)$.
\end{remark}

\subsubsection{Components of 
$C_\de(a_1,\ldots, a_s)$}
We state some claims related to the number of connected 
components of the sets
$C_\de(a_1,\ldots, a_n)$. Recall that $p$ is the maximum number of
elements in any $\disc_k$ (for $k\geq 0$). Given $I\subset I_0$ and
$s\in\N$, we say $f^s(I)\cap\disc=\emptyset$ (resp. $\neq\emptyset$)
if $f^s(I)\cap \disc_{s}=\emptyset$ (resp. $\neq\emptyset$).
\begin{claim}\label{3(p+1)timesthelast}
For any $a_1,a_2,\ldots, a_s$ with $a_j\in\{0,1\}$ for all $j$,
\begin{equation*}
\#C_{\de}(a_1,\ldots,a_s,0)+\#C_{\de}(a_1,\ldots,a_s,1)\leq
3(p+1)\#C_{\de}(a_1,\ldots,a_s)
\end{equation*}
\end{claim}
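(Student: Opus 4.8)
The plan is to prove the inequality componentwise. Imposing the extra constraint $a_{s+1}\in\{0,1\}$ only shrinks the set, so every component of $C_\de(a_1,\ldots,a_s,a_{s+1})$ is contained in a unique component $J$ of $C_\de(a_1,\ldots,a_s)$; since $C_\de(a_1,\ldots,a_s,0)$ and $C_\de(a_1,\ldots,a_s,1)$ are disjoint, it suffices to show that each component $J$ of $C_\de(a_1,\ldots,a_s)$ contains at most $3(p+1)$ components of $C_\de(a_1,\ldots,a_s,0)\cup C_\de(a_1,\ldots,a_s,1)$, and then sum over all components $J$.

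First I would observe that for every $x\in J$ one has $r_s(x)>0$ (this holds whether $a_s=0$ or $a_s=1$), so $T_s(\{f_k\},x)$ is a nondegenerate interval on which $f^s$ is a $C^3$ diffeomorphism. Since $J$ is connected and disjoint from the finitely many points where $f^s$ fails to be a local diffeomorphism (all of which satisfy $r_s=0$, hence lie outside $C_\de(a_1,\ldots,a_s)$), the map $f^s|_J$ is a $C^3$ diffeomorphism onto an interval, and $T_s(\{f_k\},x)$ equals one fixed maximal interval $\hat J\supseteq J$ for all $x\in J$. Next, a point $x\in J$ has $r_{s+1}(x)=0$ precisely when $f^s(x)\in\crit_s\cup\disc_s$; as $f^s|_J$ is injective and $\#(\crit_s\cup\disc_s)\le p$, there are at most $p$ such points, and removing them splits $J$ into at most $p+1$ subintervals $J'$. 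On each $J'$ the composition $f^{s+1}=f_s\circ f^s$ is a diffeomorphism, so $J'$ lies in a single maximal $f^{s+1}$-diffeomorphism interval $\hat{J'}$ and $T_{s+1}(\{f_k\},x)=\hat{J'}$ is constant for $x\in J'$.

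Writing $K=f^{s+1}(\hat{J'})$, we then have $r_{s+1}(x)=\dist\bigl(f^{s+1}(x),\partial K\bigr)$ for $x\in J'$, which is a tent function of $f^{s+1}(x)$; composing with the monotone diffeomorphism $f^{s+1}|_{J'}$, the map $x\mapsto r_{s+1}(x)$ is unimodal on $J'$. Hence $\{x\in J':r_{s+1}(x)\ge\de\}$ is a single interval and $\{x\in J':0<r_{s+1}(x)<\de\}$ is a union of at most two intervals, so $J'$ meets at most one component of $C_\de(a_1,\ldots,a_s,1)$ and at most two of $C_\de(a_1,\ldots,a_s,0)$. Because the cut points between consecutive $J'$'s have $r_{s+1}=0$ and thus belong to neither set, no component straddles them, and $J$ contains at most $3(p+1)$ components of $C_\de(a_1,\ldots,a_s,0)\cup C_\de(a_1,\ldots,a_s,1)$. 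The argument runs identically in the setting $(\widetilde{H_4})$ with $\disc$ replaced by $\crit$. The only point requiring care is the bookkeeping that $T_s(\{f_k\},\cdot)$, resp. $T_{s+1}(\{f_k\},\cdot)$, is genuinely constant on $J$, resp. on each $J'$ — one must check that the endpoints of $J$ produced by the threshold conditions on $r_i$ with $i\le s$ (rather than by $r_i=0$) do not obstruct $f^s$, resp. $f^{s+1}$, from being a diffeomorphism on a fixed interval containing $J$, resp. $J'$; once this is settled, the tent-function estimate is immediate.
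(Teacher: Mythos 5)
Your proof is correct and follows essentially the same strategy as the paper's: partition each component $J$ of $C_\de(a_1,\ldots,a_s)$ into at most $p+1$ pieces by removing the (at most $p$) points $x\in J$ with $f^s(x)\in\crit_s\cup\disc_s$, then argue that each piece meets at most one "1"-component and at most two "0"-components. Your unimodality/tent-function observation makes the ``at most $3$ per piece'' count more explicit than the paper's terse remark that a "0"-component cannot have "1"-components on both sides, but the underlying argument is the same.
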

\begin{proof}
Let $I$ be a component of $C_\de(a_1,\ldots, a_s)$. If $f^s(I)\cap
\disc=\emptyset$ and $I\p\subset I$ is a component of
$C_\de(a_1,\ldots, a_s,0)$, it can not exist one component of
$C_\de(a_1,\ldots, a_s,1)$ at each side of $I\p$. So, there exist at
most two components of  $C_\de(a_1,\ldots, a_s,0)$ in $I$. Hence, $I$
is divided at most  in 3 components of $C_\de(a_1,\ldots, a_s,0)\cup
C_\de(a_1,\ldots, a_s,1)$. 

If $f^s(I)\cap \disc\neq\emptyset$, I is divided at most in $p+1$
components. Each one of these components have a boundary which goes by
$f^s$ to $\disc$ and is divided (as for the last case) at most  in 3
components of $C_\de(a_1,\ldots, a_s,0)\cup C_\de(a_1,\ldots, a_s,1)$.
\end{proof}

\begin{claim}\label{withoutsingularity}
Let $s, n\in\N$ and $J$ be a component of $C_{\de}(a_1,\ldots,a_s,0)$.
If $f^{s+i}(J)\cap \disc=\emptyset$ for $1\leq i\leq n$, then
\begin{equation*}
\#\{I\subseteq C_{\de}(a_1,\ldots,a_s, 0^{i+1}), I\subseteq J\}\leq
i+1.
\end{equation*}
for $1\leq i\leq n$, where $0^{i+1}$ means that the last $i+1$ terms
are equal to 0.
\end{claim}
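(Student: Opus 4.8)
The plan is to prove, by induction on $i$ for $0\le i\le n$, the equivalent statement that $C_\de(a_1,\ldots,a_s,0^{i+1})\cap J$ has at most $i+1$ connected components; for $1\le i\le n$ this is exactly the claim. Throughout we are in the setting $(\widetilde{H_4}^*)$, so there are no critical points and the only obstruction to $f^m$ being a $C^3$ diffeomorphism on an interval is the presence of discontinuities of some intermediate $f_j$ along the orbit.

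First I would record the elementary \emph{tent-shape} fact already used implicitly in the proof of Claim~\ref{3(p+1)timesthelast}. If $W=(P,Q)$ is a maximal interval on which $f^m$ is a $C^3$ diffeomorphism, then $T_m(\{f_k\},x)=W$ for every $x\in W$, and, writing $L=\lim_{t\to P^+}f^m(t)$, $R=\lim_{t\to Q^-}f^m(t)$ and assuming $f^m|_W$ increasing (the decreasing case being symmetric), one has $r_m(\{f_k\},x)=\min\bigl(f^m(x)-L,\ R-f^m(x)\bigr)$. As a function of $u=f^m(x)\in(L,R)$ this equals $\min(u-L,R-u)$, hence it is strictly positive on all of $W$ and its super-level set $\{x\in W:\ r_m(\{f_k\},x)\ge\de\}$ is either empty or a single subinterval of $W$, namely $(f^m|_W)^{-1}([L+\de,R-\de])$.

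The main step is to show that for every $i$ with $0\le i\le n$ the map $f^{s+i+1}$ restricts to a $C^3$ diffeomorphism on $J$ and, moreover, $J$ does not meet the boundary of the maximal interval of diffeomorphism of $f^{s+i+1}$ containing it; consequently $r_{s+i+1}>0$ everywhere on $J$. Since $J$ is a component of $C_\de(a_1,\ldots,a_s,0)$, every point of $J$ has $r_{s+1}>0$, so $J$ contains no point at which $f^{s+1}$ fails to be locally a $C^3$ diffeomorphism; hence $J$ lies in a single maximal interval of diffeomorphism of $f^{s+1}$, which in particular forces $f^s(J)\cap\disc_s=\emptyset$. Using the non-recurrence hypothesis $f^{s+j}(J)\cap\disc_{s+j}=\emptyset$ for $1\le j\le n$ together with the decomposition $f^{s+j+1}=f_{s+j}\circ f^{s+j}$ (here $f_{s+j}$ is a $C^3$ diffeomorphism on $f^{s+j}(J)$ precisely because that set avoids $\disc_{s+j}$), a finite induction on $j\le i$ gives that $f^{s+i+1}|_J$ is a $C^3$ diffeomorphism. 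Finally, $r_{s+i}>0$ on $J$ implies that each point of $J$ is interior to a maximal interval of diffeomorphism $V$ of $f^{s+i}$; since the maximal intervals of diffeomorphism of $f^{s+i+1}$ inside $V$ are the $f^{s+i}$-preimages of the components of $f^{s+i}(V)\setminus\disc_{s+i}$, an endpoint of $J$ lying on the boundary of such an interval would have image under $f^{s+i}$ in $\disc_{s+i}$, contradicting $f^{s+i}(J)\cap\disc_{s+i}=\emptyset$. Hence $J$ is contained in the \emph{open} maximal interval of diffeomorphism of $f^{s+i+1}$ and $r_{s+i+1}>0$ on $J$.

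With these facts the induction closes at once. For $i=0$, $C_\de(a_1,\ldots,a_s,0)\cap J=J$ is connected. For the step $i-1\to i$, $1\le i\le n$, we have $C_\de(\ldots,0^{i+1})=C_\de(\ldots,0^{i})\cap\{0<r_{s+i+1}<\de\}$, and on $J$ this intersection deletes exactly the single (possibly empty) subinterval $M=\{x\in J: r_{s+i+1}(x)\ge\de\}$, using $r_{s+i+1}>0$ on $J$ and the tent-shape fact, so
\[
C_\de(\ldots,0^{i+1})\cap J=\bigl(C_\de(\ldots,0^{i})\cap J\bigr)\setminus M .
\]
By the inductive hypothesis $C_\de(\ldots,0^{i})\cap J$ has at most $i$ components, and removing a single interval from a finite disjoint union of intervals raises the number of components by at most one (a single interval lies strictly inside at most one of finitely many pairwise disjoint intervals), so $C_\de(\ldots,0^{i+1})\cap J$ has at most $i+1$ components. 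I expect the only genuinely delicate point to be the bookkeeping in the main step — tracking which indices of the non-recurrence hypothesis are consumed to keep both $f^{s+i+1}|_J$ a diffeomorphism and $r_{s+i+1}$ strictly positive on $J$; everything else is the tent-shape observation together with the trivial remark about removing an interval from a disjoint union.
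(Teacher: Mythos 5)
Your proof is correct and follows the same inductive strategy as the paper's: both show that passing from $C_\de(\ldots,0^{i})$ to $C_\de(\ldots,0^{i+1})$ within $J$ can increase the number of components by at most one, using the hypothesis that $J$ avoids the discontinuities to keep $f^{s+i+1}|_J$ a diffeomorphism and hence the super-level set of $r_{s+i+1}$ on $J$ a single interval. You simply make explicit the tent-shape observation and the ``remove one interval'' bookkeeping that the paper leaves implicit in the phrase ``at most one component gives rise to two.''
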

\begin{proof}
For $i=1$ the proof is contained on the proof of Claim
\ref{3(p+1)timesthelast}.  
Now, note that every connected component of 
$C_{\de}(a_1,\ldots,a_s, 0^{i})$
gives rise to one or two components of $C_{\de}(a_1,\ldots,a_s,
0^{i+1})$.
The proof of Claim \ref{withoutsingularity} follows by induction on
$i$, showing that at most one component of $C_{\de}(a_1,\ldots,a_s,
0^{i})$ gives rise to two components of $C_{\de}(a_1,\ldots,a_s,
0^{i+1})$. 
\end{proof}

To bound the number of connected components whose intersection with
$K_{n,\ep}(t_1,\ldots,t_m)$ is non-empty, we have the following claim.
\begin{claim}\label{withsingularity}
Let $l\in \N$ and $\ep>0$ be constants and let $\de=\de(l)$ be the
number given by Lemma \ref{lemmaboundsize}. For any $a_1,\ldots, a_s$
with $a_j\in\{0,1\}$, $\{t_1,\ldots, t_m\}\subset \{0,1,\ldots,n-1\}$.
If $\{s+1,\ldots,s+i\}\cap \{t_1,\ldots, t_m\}=\emptyset$ and $i\leq
l$, then
\begin{multline*}
\#\{I\subseteq  C_{\de}(a_1,\ldots,a_s,0^{i+1}), I\cap
K_{n,\ep}(t_1,\ldots,t_m)\neq\emptyset \} \leq (i+1) \#\{I\subseteq
C_{\de}(a_1,\ldots,a_s,0), I\cap
K_{n,\ep}(t_1,\ldots,t_m)\neq\emptyset \}.
\end{multline*}
\end{claim}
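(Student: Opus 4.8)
The plan is to reduce this claim to Claim~\ref{withoutsingularity} by controlling the number of components of $C_\de(a_1,\dots,a_s,0)$ that can actually meet $K_{n,\ep}(t_1,\dots,t_m)$, rather than the (possibly larger) number of components of $C_\de(a_1,\dots,a_s,0^{i+1})$ that meet it. The key geometric point is that if $\{s+1,\dots,s+i\}$ is disjoint from $\{t_1,\dots,t_m\}$, then for every point $x$ lying in a component of $C_\de(a_1,\dots,a_s,0^{i+1})$ meeting $K_{n,\ep}(t_1,\dots,t_m)$, the iterates $f^{s+1}(x),\dots,f^{s+i}(x)$ all avoid the neighborhood $V_\ep\disc$ of the discontinuity set. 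In particular, along such components the map $f^{s+i}$ is the composition of $i\le l$ honest $C^1$ diffeomorphisms onto their images with no discontinuity crossed.

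First I would fix a component $J$ of $C_\de(a_1,\dots,a_s,0)$ that meets $K_{n,\ep}(t_1,\dots,t_m)$, and restrict attention to the sub-piece $J'$ of $J$ consisting of points whose first $s+i$ iterates behave as prescribed; on $J'$ we have $f^{s+j}(J')\cap V_\ep\disc=\emptyset$ for $1\le j\le i$ (hence a fortiori $f^{s+j}(J')\cap\disc=\emptyset$). This is exactly the hypothesis needed to invoke Claim~\ref{withoutsingularity} \emph{within} $J'$: it gives $\#\{I\subseteq C_\de(a_1,\dots,a_s,0^{i+1}),\ I\subseteq J'\}\le i+1$. Summing this bound over all components $J$ of $C_\de(a_1,\dots,a_s,0)$ that meet $K_{n,\ep}(t_1,\dots,t_m)$, and observing that every component $I\subseteq C_\de(a_1,\dots,a_s,0^{i+1})$ meeting $K_{n,\ep}(t_1,\dots,t_m)$ is contained in exactly one such $J$ (since $C_\de(a_1,\dots,a_s,0^{i+1})$ refines $C_\de(a_1,\dots,a_s,0)$ and such an $I$ forces the corresponding $J$ to meet $K_{n,\ep}(t_1,\dots,t_m)$), yields the stated inequality.

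The step I expect to be the main obstacle is verifying carefully that the disjointness hypothesis $\{s+1,\dots,s+i\}\cap\{t_1,\dots,t_m\}=\emptyset$ together with the condition $i\le l$ really does license the application of Claim~\ref{withoutsingularity} on each sub-piece: one must check that ``$f^{s+j}(x)\notin V_\ep\disc$'' propagates to the interval statement ``$f^{s+j}(J')\cap\disc=\emptyset$'' for the relevant sub-interval $J'$ — here is where Lemma~\ref{lemmaboundsize}, with $\de=\de(l)$ chosen so that intervals of length $\le 2\de$ stay inside $V_\ep\disc$-sized pieces under $\le l$ iterates, is used to guarantee that the image intervals do not secretly straddle a discontinuity even though the marked orbit points avoid $V_\ep\disc$. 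Once that bookkeeping is in place, the counting is immediate from Claim~\ref{withoutsingularity}.
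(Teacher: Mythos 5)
Your plan identifies the right ingredients (Lemma~\ref{lemmaboundsize} to promote pointwise avoidance of $V_\ep\disc$ to an interval-level statement, then Claim~\ref{withoutsingularity} to count components), and you correctly recognize the reduction to bounding, for each component $J$ of $C_\de(a_1,\dots,a_s,0)$ meeting $K_{n,\ep}(t_1,\dots,t_m)$, the number of components of $C_\de(a_1,\dots,a_s,0^{i+1})$ inside it. But the introduction of a "sub-piece $J'$" is a detour that leaves a real gap. The set of points of $J$ whose iterates $f^{s+1},\dots,f^{s+i}$ avoid $V_\ep\disc$ need not be an interval, so Claim~\ref{withoutsingularity} cannot be invoked on it as stated. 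More seriously, a component $I\subseteq C_\de(a_1,\dots,a_s,0^{i+1})$ that merely \emph{meets} $K_{n,\ep}(t_1,\dots,t_m)$ is not automatically contained in $J'$: you know some point of $I$ lies in $J'$, not all of it, so the count $\#\{I\subseteq J'\}$ does not a priori dominate the count you actually need.

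The cleaner route, which the paper takes, is to show that the \emph{whole} component $J$ satisfies the hypothesis of Claim~\ref{withoutsingularity}, i.e.\ $f^{s+j}(J)\cap\disc=\emptyset$ for all $0\le j\le i$, whenever $J$ meets $K_{n,\ep}(t_1,\dots,t_m)$. This is a contrapositive: since $J$ is a component of $C_\de(a_1,\dots,a_s,0)$, one has $|f^{s+1}(J)|\le 2\de$. If $i_0\le i$ is the first index with $f^{s+i_0}(J)\cap\disc\ne\emptyset$, then applying Lemma~\ref{lemmaboundsize} (with $i_0\le l$) to $f^{s+1}(J)$ gives $|f^{s+i_0}(J)|<\ep$, so the entire image $f^{s+i_0}(J)$ lies in $V_\ep\disc$ — for \emph{every} $x\in J$, not just for a sub-piece. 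Since $s+i_0\notin\{t_1,\dots,t_m\}$, this forces $J\cap K_{n,\ep}(t_1,\dots,t_m)=\emptyset$, a contradiction. Once this is established for the full component $J$, Claim~\ref{withoutsingularity} gives $\#\{I\subseteq C_\de(a_1,\dots,a_s,0^{i+1}),\ I\subseteq J\}\le i+1$ directly, and summing over the relevant $J$'s finishes. Your draft has all the needed tools but misses that the pointwise-to-interval argument, run contrapositively, makes the whole component $J$ behave, so no sub-piece is needed.
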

\begin{proof}
Let $I$ be a component
 of $C_\de(a_1,\ldots,a_s,0)$. Then $|f^{s+1}(I)|\leq
 2\de$. Let $i_0\in\{1,2,\ldots, i\}$ the first number such
 that $f^{s+i_0}(I)\cap \disc\neq\emptyset$. Since
 $f^{s+j}(I)\cap \disc=\emptyset$ for $0\leq j<i_0$, by Lemma
 \ref{lemmaboundsize}, $|f^{s+i_0}(I)|<\ep$. Then, for all
 $x\in I$, $f^{s+i_0}(x)\in V_\ep\disc$. Since
 $\{s+1,\ldots,s+i\}\cap \{t_1,\ldots, t_m\}=\emptyset$, then
 $I\cap K_{n,\ep}(t_1,\ldots,t_m)=\emptyset$. Hence, if
 $I\cap K_{n,\ep}(t_1,\ldots,t_m)\neq\emptyset$ and
 $\{s+1,\ldots,s+i\}\cap \{t_1,\ldots, t_m\}=\emptyset$, then
 $f^{s+j}(I)\cap \disc=\emptyset$ for all $0\leq j\leq i$,
 with $i\leq l$. Thus, claim follows using Claim
\ref{withoutsingularity}.
\end{proof}
\subsubsection{Proof of Lemma \ref{lemmaprin}}

Given $m<n$, $\de>0$ and $\ep>0$, let us consider $a_1,\ldots,a_n$
with $a_i\in\{0,1\}$ (such that $\sumaupla{n}<\de n$) and 
$\{t_1,\ldots, t_m\}\subset \{0,\ldots,n-1\}$. 
We can decompose the sequence $a_1\ldots a_n$ in maximal blocks of
0\textquoteright s and 1\textquoteright s. We write the symbol $\xi$
in the $j$-th position if $a_j=1$ or, $a_j=0$ and $j=t_k$ for some 
$k\in\{1,\ldots,m\}$. In this way we have,  
\begin{equation} \label{blockdecompo}
a_1 a_2\ldots a_n=\xi^{i_1}0^{j_1}\xi^{i_2}0^{j_2}\ldots
\xi^{i_h}0^{j_h}
\end{equation}
with $0\leq i_k,j_k\leq n$ for $k=1,\ldots,h$,
$\sum_{k=1}^{h}(i_k+j_k)=n$ and $\sum_{k=1}^{h} i_k < m+\de n$. 

Lets us assume that $a_1,\ldots, a_n$ are as in (\ref{blockdecompo}).
Let $l,\ep$ and $\de$ be as in Lemma \ref{lemmaboundsize}.  Using
claims \ref{3(p+1)timesthelast}  and \ref{withsingularity} we have,
\begin{align*}
\#\{I \subset &C_{\de}(a_0,\ldots,a_n), I\cap
K_{n,\ep}(t_1,\ldots,t_m)\neq\emptyset\}\leq \\
&\leq (3(p+1)(l+1)^{\frac{j_h}{l}+1}(3(p+1))^{i_h}) \ldots
(3(p+1)(l+1)^{\frac{j_1}{l}+1}(3(p+1))^{i_1}) \\
&\leq (3(p+1))^{\sum_{k=1}^{h}i_k} (3(p+1))^h
(l+1)^{\frac{\sum_{k=1}^{h}j_k}{l}+h} \\
&\leq (3(p+1))^{m+\de n+h} (l+1)^{\frac{n}{l}+h}.
\end{align*}

Therefore, if $\sumaupla{n}< \de n$ and $m<\ga n$ we conclude from the
inequality above that for $n$ big enough,
\begin{equation}
\begin{split} \label{fundamental}
\#\{I&\subset C_{\de}(a_1,\ldots,a_n), I\cap
K_{n,\ep}(t_1,\ldots,t_m)\neq\emptyset\} \\
&\leq (3(p+1))^{\ga n+\de n}(3(p+1))^{2(\de+\ga)n}
(l+1)^{\frac{n}{l}+2(\de+\ga)n} \leq \exp (n \:\psi_0(l,\ga,\de) )
\end{split}
\end{equation}
where $\psi_0(l,\ga,\de)=3(\de+\ga)\log (3(p+1))+
2(\de+\ga+\frac{1}{l})\log (2l)$. 

Using (\ref{fundamental}) and Stirling\textquoteright s formula in
equation (\ref{A_n<Cfinal}), we conclude that

\begin{equation*}
\sum_{a_1,\ldots, a_n} \# C_\de(a_1,\ldots,a_n)\leq \exp (n\:
\psi_3(l,\ga,\de))
\end{equation*}
where
$\psi_3(l,\ga,\de)=\psi_0(l,\ga,\de)+ \psi_1(\ga)+\psi_2(\de)$,
$\psi_1(\ga)\ra 0$ and $\psi_2(\de)\ra 0$ when $\ga\ra 0$ and $\de\ra
0$, respectively. 
Hence, we have to choose $l$ such that
\begin{equation} \label{choosel}
\frac{2}{l}\log (2l)<\frac{\la}{14}
\end{equation}
and, let $\ga>0$ be such that
\begin{equation} \label{choosega}
2\ga\log (2l)<\frac{\la}{14}, \qquad
3\ga \log (3(p+1))<\frac{\la}{14}, \quad \text{ and } \quad
\psi_1(\ga)<\frac{\la}{14}
\end{equation}
Next, we find $\ep>0$, using Lemma \ref{lemmafrequence}. Finally,
given $\ep$ and $l$, let $\de>0$ be the constant given by Lemma
\ref{lemmaboundsize} and satisfying 
\begin{equation} \label{choosede}
2\de \log (2l)<\frac{\la}{14}, \qquad
3\de\log (3(p+1))<\frac{\la}{14} \quad \text{ and } \quad
\psi_3(\de)<\frac{\la}{14}
\end{equation}
With this choice, $\psi_3(l,\ga,\de)\leq \frac{\la}{2}$. Hence the
first part of Lemma \ref{lemmaprin} is proved. On the other hand,
observe that the choice of $\de$ is given fundamentally by Lemmas
\ref{lemmafrequence} and \ref{lemmaboundsize}. Namely, $\de$ depends
on: the constant $\la$ in the definition of $Y_n(\la)$; the uniformity
of $\ep$ (given $\ell\in\N$) on the equation
(\ref{eq:recurrence-property}); the uniform boundedness of $|Df_k|$ on
the proof of Lemma \ref{lemmaboundsize}; and the uniform boundedness
of the number of discontinuity points for $f_k$, where $k\geq 0$. This
concludes the proof of Lemma \ref{lemmaprin}. \qed
\subsubsection{Proof of Theorem \ref{PrincipalB}}
Note that for every $N\in\N$ it holds
\begin{equation*}
E\cap\left(\bigcap_{n\geq N} Y_n(\la)\right) \cap
\complement\left(\bigcup_{n\geq N} A_n(\de)\cap Y_n(\la)\right)
\:\mbox{\Large $\subset$}\:E\cap\left(\bigcap_{n\geq N} \complement
A_n(\de)\cap Y_n(\la)\right). 
\end{equation*}
where $\complement B$ denotes the complement set of $B$ and $|B|$
denotes the Lebesgue measure of $B$. By the hypotheses of theorem, 
$|E\cap\left(\cap_{n\geq N} Y_n(\la)\right) |$ converges to the
Lebesgue measure of $E$. On the other hand, note that if $J$ is a
component of $C_\de(a_1,\ldots, a_n)$ (with $a_1+\ldots+a_n<\de n$)
then $|J\cap Y_n(\la)\cap A_n(\de)|\leq |I_0| \exp (-n\la)$. Then,
using Lemma \ref{lemmaprin} we conclude that $|\cup_{n\geq N}
A_n(\de)\cap Y_n(\la)|$ converges to zero when $N\to\infty$. 
Therefore, $|\cap_{n\geq N} (\complement A_n(\de)\cap Y_n(\la))\cap
E|$ converges to $|E|$ when $N\to\infty$. Thus, we conclude that
(\ref{liminfr_i}) holds considering $3\constantB=\de^2$. \qed

\section{Non-invertible base transformation}
 \label{sec:Remove-H2}

 Let $\fhi: \toro\times I_0\ra \toro\times I_0$ or
 $\varphi:\toro\times\Y\to\toro\times\Y$ be a skew-product
 satisfying $(H_2)$ and the remaining conditions of
 Theorems~\ref{mcor:finiteergodicmeasures} or
 \ref{mthm:PrincipalA}.  We define a natural extension
 $\hat{\fhi}$ of this map and we prove that it satisfies
 $(H_2^*)$ and also the remaining conditions of the
 statement of the Main Theorems.

\subsection{Inverse limit construction}
\label{sec:inverse-limit-constr}

We use a standard construction which
allows to define, for an endomorphism of a measure space, an
induced invertible bimeasurable map of a new measure
space. For more details, see for instance \cite[Chapter
10.4]{CoFoSi82}. We perform the construction with the map
$\al:\toro\ra \toro$.

First consider the (inverse limit) space $\hat{\toro}$ which
is formed by points
$$\hat{\te}=(\te_{0},\te_{-1},\te_{-2},\ldots),$$ 
where $\te_{-i}\in\toro$ for $i\geq 0$ and
$\al(\te_{-i})=\te_{-i+1}$ for $i\geq 1$.
Then we have 
\begin{enumerate}
\item $\toro^{\N}$ with the product topology is a metrizable
  space (see \cite[Lemma 111.15]{kasriel});
\item $\toro^{\N}$ is separable (see \cite[Theorems 111.14
  and 58.7]{kasriel});
\item as a topological space (in fact, a metrizable space),
  $\toro^{\N}$ admits the Borel $\sigma$-algebra
  $\mathcal{B}_{\toro^{\N}}$, which is the $\sigma$-algebra
  generated by the open sets of the product topology on
  $\toro^{\N}$;
\item the product $\sigma$-algebra $\prod_{i\in\N}
  \mathcal{B}_{\toro_i}$ on $\toro^{\N}$ coincides with
  $\mathcal{B}_{\toro^{\N}}$. ($\toro_i=\toro$ for all
  $i\in\N$);

\item as subset of $\toro^{\N}$, $\hat{\toro}$ is endowed with
  the product topology, and therefore has a Borel
  $\sigma$-algebra $\mathcal{B}_{\hat{\toro}}$;
\item $\mathcal{B}_{\hat{\toro}}$ coincides with the
  $\sigma$-algebra obtained by intersecting $\prod_{i\in\N}
  \mathcal{B}_{\toro_i}$ with $\hat{\toro}$.
\end{enumerate}



Now, $\hat{\toro}$ with the $\sigma$-algebra
$(\prod_{i\in\N} \mathcal{B}_{\toro_i})\cap \hat{\toro}$ is
a measurable space. For the sets of the form
\begin{equation*}
 (A)_n=
 \{\hat{\te}=(\te_{0},\te_{-1},\te_{-2},\ldots)\in \hat{\toro}; \:
\te_{-n}\in A\}
\end{equation*}
where $A\in \mathcal{B}_{\toro}$ and $n\geq 0$, we define
$\hat{\nu}((A)_n)=\nu(A)$. Since these sets generate the
$\sigma$-algebra and the conditions of compatibility of
Kolmogorov\s s Theorem are satisfied, we have a measure
$\hat{\nu}$ defined on the $\sigma$-algebra.

We can consider the map
$\hat{\al}:\hat{\toro}\ra\hat{\toro}$ given by
\begin{equation*}
  \hat{\al}((\te_{0},\te_{-1},\te_{-2},\ldots))
  =(\al(\te_{0}),\al(\te_{-1}),\al(\te_{-2}),\ldots)
  =(\al(\te_{0}),\te_{0},\te_{-1},\te_{-2},\ldots).
\end{equation*}
This map is invertible ${\hat{\al}}^{-1}
((\te_{0},\te_{-1},\te_{-2},\ldots))=(\te_{-1},\te_{-2},\te_{-3}
\ldots)$. The
measure $\hat{\nu}$ is invariant with respect to
$\hat{\al}$. 

Therefore we have constructed an invertible map $\hat{\al}$,
bimeasurable (with the Borel $\sigma$-algebra
$\mathcal{B}_{\hat{\toro}}$) on a metric space
$\hat{\toro}$, such that
$\pi_{0}\circ\hat{\al}(\hat{\te})=\al\circ
\pi_{0}(\hat{\te})$ for every $\hat{\te}\in \hat{\toro}$,
where $\pi_{0}(\hat{\te})=\te_0$. It is also useful to define
the natural projection map $P:\hat{\toro}\times I_0\ra \toro\times
I_0$, by
$P(\hat{\te},x)=(\pi_{0}(\hat{\te}),x)=(\te_{0},x)$.

\subsection{Non-invertible base}
\label{sec:case-with-one-2}

Let us define the map $\hat{\fhi}:\hat{\toro}\times I_0\ra
\hat{\toro}\times I_0$, 
$\hat{\fhi}(\hat{\te},x)=(\hat{\al}(\hat{\te}),
\hat{f}(\hat{\te},x))$, where
$\hat{f}(\hat{\te},x)=f(\te_{0},x)$. Since $\hat{\al}$, $P$
and $f$ are measurable 
then $\hat{\fhi}$ is measurable, i.e,
${\hat{\fhi}}^{-1}(\mathcal{B}_{\hat{\toro}}\times
\mathcal{B}_{I_0})\subset \mathcal{B}_{\hat{\toro}}\times
\mathcal{B}_{I_0}$.

Note that the set of critical and discontinuity points for
$\hat{f}_{\hat{\te}}$ projects onto the corresponding set
for $f_{\te_0}$. Hence the measurability of the set
\begin{equation*}
  \hat{\cc}
  =
  \{ (\hat{\te},x)\in \hat{\toro}\times I_0 :
  x\in \crit_{\theta_0}\cup\disc_{\theta_0} \}
  =
  P^{-1}(\cc)
\end{equation*}
follows from the measurability of the set $\cc$ and of the
map $P$. Thus, $\hat{\fhi}$ satisfies
condition $(H_1)$.

We note that the set of discontinuity points $\Disc_{\hat{\alpha}}$
of $\hat{\alpha}$ coincides with the set
\begin{equation*}
  (\Disc_{\alpha})_0=\{ \hat{\te}\in \hat{\toro}; \te_0\in
\Disc_{\alpha}\}
\end{equation*}
and so
$\hat{\nu}(\Disc_{\hat{\alpha}})=\nu(\Disc_{\alpha})=0$. On
the other hand, for the map $\hat{F}:\hat{\toro}
\to B(I_0), \hat\theta\mapsto \hat{f}_{\hat{\theta}}=f_{\theta_0}$
we have that 
$\Disc_{\hat{F}}\subset (\Disc_{F})_{0}$. 
Hence the map $\hat{\fhi}$ satisfies conditions $(H
_2^*)$ and
$(H_3)$. The map $\hat{\fhi}$ clearly satisfies condition
$(H_4)$ (resp. $(H_4^*)$) if the map $\fhi$ satisfies the
condition $(H_4)$ (resp. $(H_4^*)$).

Moreover, if $\fhi$ is non-uniformly expanding along
the vertical direction according to $\nu\times\leb$, on the subset
$Z$, then $\hat{\fhi}$ is non-uniformly expanding along
the vertical direction according to $\hat{\nu}\times\leb$, on the
subset $P^{-1}Z$. It also holds that
$\hat{\nu}\times\leb(P^{-1}Z)=\nu\times\leb(Z)$.

Thus, $\hat{\fhi}$ is a skew-product in the conditions of Theorems
\ref{mcor:finiteergodicmeasures} and \ref{mthm:PrincipalA}.

We remark that in order to prove the relative compactness of the
sequences of measures $\{\eta_n\}$ and $\{\mu_n\}$ (see Lemma
\ref{le:tightness} and Remark \ref{rmk:tight-eta_n}) we use the fact
that $\toro$ is a separable metrizable and complete topological
space. The space $\hat{\toro}$ can fail to be complete. To solve this
problem, we can consider $\hat{\nu}$ as a measure defined on
$\toro^\N$ (stating that $\hat{\nu}(\toro^\N\setminus\hat{\toro})=0$).
Thus, we can find a closed set $\toro_1\subset
\hat{\toro}$ such that
$\hat{\nu}(\hat{\toro}\setminus\toro_1)<\epsilon$. On the other hand,
since $\toro^\N$ is a separable metrizable and complete topological
space, we can find a compact set $\toro_2\subset \toro^\N$, such that
$\hat{\nu}(\toro^\N\setminus\toro_2)<\epsilon$. Hence, for the
compact set $\toro_1\cap\toro_2$, we have that
$\hat{\nu}(\hat{\toro}\setminus(\toro_1\cap\toro_2))<2\epsilon$.
Therefore, considering $\toro_0=\toro_1\cap\toro_2$ in Lemma
\ref{le:tightness} and Remark \ref{rmk:tight-eta_n}, the relative
compactness of the sequences of measures $\{\eta_n\}$ and $\{\mu_n\}$
follows.

Hence we may repeat the same sequence of steps in the
arguments in Section~\ref{sec:acim} assuming
Theorem~\ref{PrincipalB} to conclude the result in
Proposition \ref{pr:existenceofacim}: there exists an
invariant probability measure $\hat{\mu}$ which is
absolutely continuous with respect to $\hat{\nu}\times
\leb$, with $\hat{\mu}(P^{-1}(Z(\la)))>0$. Now we push this
measure for the original space $\toro\times I_0$.
\begin{lemma}\label{le:invariance-hat}
  $P_* \hat{\mu}$ is an $\fhi$-invariant probability measure which is
  absolutely continuous with respect to $\nu\times\leb$, and $P_*
\hat{\mu}(Z(\la))>0$.
\end{lemma}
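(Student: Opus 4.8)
The plan is to use that $P:\hat\toro\times I_0\to\toro\times I_0$ is a measurable semiconjugacy from $\hat\fhi$ to $\fhi$ which carries $\hat\nu\times\leb$ onto $\nu\times\leb$; every relevant property of $\hat\mu$ then descends to $P_*\hat\mu$.

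First I would record the two identities $P\circ\hat\fhi=\fhi\circ P$ and $P_*(\hat\nu\times\leb)=\nu\times\leb$. The first is immediate from the definitions of $\hat\fhi$, $\hat\al$ and $\hat f$ together with $\pi_0\circ\hat\al=\al\circ\pi_0$: indeed $P\big(\hat\fhi(\hat\te,x)\big)=\big(\al(\te_0),f(\te_0,x)\big)=\fhi\big(P(\hat\te,x)\big)$. For the second, $P$ is the product of $\pi_0$ with the identity on $I_0$, so $P_*(\hat\nu\times\leb)=(\pi_0)_*\hat\nu\times\leb$; and $(\pi_0)_*\hat\nu=\nu$ because $(A)_0=\pi_0^{-1}(A)$ gives $(\pi_0)_*\hat\nu(A)=\hat\nu((A)_0)=\nu(A)$ for every $A\in\B_\toro$. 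Here the facts collected in Section~\ref{sec:inverse-limit-constr} on the Borel structure of $\hat\toro\subset\toro^\N$ guarantee these equalities hold at the level of the relevant $\sigma$-algebras.

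Granting these, invariance of $P_*\hat\mu$ follows by transporting the invariance of $\hat\mu$: $\fhi_*(P_*\hat\mu)=(\fhi\circ P)_*\hat\mu=(P\circ\hat\fhi)_*\hat\mu=P_*(\hat\fhi_*\hat\mu)=P_*\hat\mu$. For absolute continuity I would transfer the uniform density bound. By Section~\ref{sec:case-with-one-2} the map $\hat\fhi$ satisfies $(H_1)$, $(H_2^*)$, $(H_3)$ and $(H_4)$ (or $(H_4^*)$), so Lemma~\ref{le:bdd-dist-limit} applies and yields $K>0$ with $\hat\mu(\hat W)\le K(\hat\nu\times\leb)(\hat W)$ for every Borel $\hat W\subset\hat\toro\times I_0$. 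Taking $\hat W=P^{-1}(W)$ and using $P_*(\hat\nu\times\leb)=\nu\times\leb$ we get
\[
  (P_*\hat\mu)(W)=\hat\mu\big(P^{-1}(W)\big)\le K(\hat\nu\times\leb)\big(P^{-1}(W)\big)=K(\nu\times\leb)(W)
\]
for every Borel $W\subset\toro\times I_0$; in particular $P_*\hat\mu$ is finite and $P_*\hat\mu\ll\nu\times\leb$. Finally $(P_*\hat\mu)(Z(\la))=\hat\mu\big(P^{-1}(Z(\la))\big)>0$, the right-hand side being exactly the conclusion already obtained for $\hat\mu$, so $P_*\hat\mu$ is a nonzero measure with all the asserted properties.

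There is no serious obstacle here; the only points needing care are checking that the two displayed identities hold at the level of Borel $\sigma$-algebras (using Section~\ref{sec:inverse-limit-constr}) and that the hypotheses of Lemma~\ref{le:bdd-dist-limit} genuinely transfer to $\hat\fhi$, which was arranged in Section~\ref{sec:case-with-one-2}.
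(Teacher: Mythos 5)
Your invariance argument and your observation that $P_*(\hat\nu\times\leb)=\nu\times\leb$ are both correct, and they match the paper's strategy (the paper proves $\fhi$-invariance by the same semiconjugacy computation, and uses the implication $(\nu\times\leb)(A)=0\Rightarrow(\hat\nu\times\leb)(P^{-1}A)=0$, which is exactly what your identity $P_*(\hat\nu\times\leb)=\nu\times\leb$ gives). The positivity $P_*\hat\mu(Z(\la))=\hat\mu(P^{-1}(Z(\la)))>0$ is also fine.

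However, the absolute-continuity step has a genuine gap. You invoke Lemma~\ref{le:bdd-dist-limit} to claim that $\hat\mu(\hat W)\le K(\hat\nu\times\leb)(\hat W)$ for all Borel $\hat W$, but Lemma~\ref{le:bdd-dist-limit} only applies to a weak$^*$ limit $\mu$ of the sequence $(\mu_n)_n$. The measure $\hat\mu$ produced by (the hatted version of) Proposition~\ref{pr:existenceofacim} is not that weak$^*$ limit: it is $\mu+\xi^{ac}$, where $\xi^{ac}$ is the absolutely continuous part in the Lebesgue decomposition of $\xi=\lim\xi_n$ (the complementary piece $\eta_n=\mu_n+\xi_n$). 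For $\xi^{ac}$ one only knows absolute continuity, not a uniform density bound; indeed nothing in the construction controls the density of $\xi^{ac}$. So the quantitative estimate you write down for $\hat\mu$ is not justified, and your deduction of $P_*\hat\mu\ll\nu\times\leb$ from it is not warranted as stated.

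The fix is easy and is what the paper does: you do not need any density bound. From $P_*(\hat\nu\times\leb)=\nu\times\leb$ (which you already proved) one gets directly that whenever $(\nu\times\leb)(A)=0$ we have $(\hat\nu\times\leb)(P^{-1}A)=0$; then the qualitative fact $\hat\mu\ll\hat\nu\times\leb$ gives $P_*\hat\mu(A)=\hat\mu(P^{-1}A)=0$. This is all that is needed for absolute continuity of $P_*\hat\mu$, and it avoids any appeal to Lemma~\ref{le:bdd-dist-limit}.
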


\begin{proof}
  Let $A\subset \toro\times I_0$ be a measurable
  subset. Using that $\fhi\circ P=P\circ \hat{\fhi}$ and the
  $\hat{\fhi}$-invariance of $\hat{\mu}$ we have that
\begin{equation*}
P_*\hat{\mu}(\fhi^{-1}(A))
=\hat{\mu}(P^{-1}\fhi^{-1}(A))
= \hat{\mu}((P\circ \hat{\fhi})^{-1}(A))
=\hat{\mu}(\hat{\fhi}^{-1}(P^{-1}A))
=P_*\hat{\mu}(A)
\end{equation*}
and then $P_* \hat{\mu}$ is invariant with respect to
$\fhi$. 

On the other hand, if $(\nu\times\leb)(A)=0$, then
$(\hat{\nu}\times\leb)(P^{-1}A)=0$. Using the absolute
continuity of $\hat{\mu}$, we conclude that $P_*
\hat{\mu}(A)=0$.
\end{proof}

\section{Finitely many ergodic basins }
 \label{sec:finitely-many-ergodi}


 Here we conclude the proofs of Theorems \ref{mcor:finiteergodicmeasures} and \ref{mthm:PrincipalA}, proving that the
 invariant sets with positive $\nu\times\leb$-measure, have
 mass bounded away from zero.

 Given $\lambda>0$, let $Z(\lambda)\subset \toro\times I_0$
 the set of points with vertical Lyapunov exponents greater
 than $2\lambda$, i.e., points in $Z$ for which the
 inequality (\ref{eq:posiexponents}) holds.

\begin{proposition}\label{pr:finitelymanyergodic}
  Given $\lambda>0$, there exists $b>0$ such that every
$\vfi$-invariant subset $G\subset
Z(\lambda)$ with positive
  $\nu\times\leb$-measure satisfies
  $(\nu\times\leb)(G)>b$.
\end{proposition}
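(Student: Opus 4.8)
The plan is to show that any $\vfi$-invariant set $G\subset Z(\lambda)$ with $(\nu\times\leb)(G)>0$ carries ``definite mass at infinity'' by running the construction of Section~\ref{sec:acim} with $G$ in place of $Z(\la)$. First I would replace the base by its natural extension (Section~\ref{sec:Remove-H2}), so we may assume $(H_2^*)$ holds and $\alpha$ is a bimeasurable bijection; the set $P^{-1}(G)$ is then $\hat\vfi$-invariant with the same $\hat\nu\times\leb$-measure as $G$, and it suffices to bound this measure below by a constant depending only on $\la$. Since $G\subset Z(\la)$, every point of $G$ has vertical Lyapunov exponent $>2\la$, so Corollary~\ref{MainCorollary} (applied with $E=G$, using that the constant $\constantB$ and the Pliss exponent $\xi$ depend only on $\la$ and the data in $(H_1)$, $(H_4)$/$(H_4^*)$) gives, for all large $n$,
\begin{equation*}
  \int \frac{1}{n}\sum_{i=1}^{n}
  \leb\big(H_i(\te,\constantB)\cap G(\theta)\big)\,d\nu(\te)
  \ \geq\ \frac{\xi}{2}\,(\nu\times\leb)(G).
\end{equation*}

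Next I would form the measures $\mu_n$ exactly as in~\eqref{eq:mu-n-theta}–\eqref{eq:mu_n=int-mu_n-theta} but with $G$ instead of $Z(\la)$; by $\vfi$-invariance of $G$ one has $f^j_{\al^{-j}(\te)}\big(Z(\al^{-j}(\te),\la)\cap H_j\big)$ replaced by the push-forward of $\leb\mid G(\al^{-j}(\te))\cap H_j(\al^{-j}(\te),\constantB)$, and the computation in the proof of Proposition~\ref{pr:existenceofacim} (using $\nu$-invariance of $\alpha^{-1}$ and Fubini) shows that any weak$^*$ accumulation point $\mu$ of $(\mu_n)$ satisfies $\mu(\toro\times I_0)\ge \frac{\xi}{2}(\nu\times\leb)(G)$. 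By Lemmas~\ref{le:bdd-dens}, \ref{le:bdd-dens-mu} and \ref{le:bdd-dist-limit} this $\mu$ obeys $\mu(W)\le K\,(\nu\times\leb)(W)$ for all measurable $W$, with $K$ depending only on $\constantB$, hence only on $\la$. Combining $\mu\le K\,(\nu\times\leb)$ with $\mu(\toro\times I_0)\ge\frac\xi2(\nu\times\leb)(G)$ forces $(\nu\times\leb)(G)\ge \frac{\xi}{2K}$. The point is that the total mass of $\mu$ is bounded below by a fixed fraction of $(\nu\times\leb)(G)$, while its density with respect to $\nu\times\leb$ is bounded above by a fixed constant, so $(\nu\times\leb)(G)$ cannot be arbitrarily small.

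The remaining issue is to check that the measure $\mu$ so produced is in fact concentrated on $G$ (or at least that its total mass is controlled by $(\nu\times\leb)(G)$ and not by some larger set): since each $\mu_n$ is supported on $\bigcup_{j=1}^n f^j_{\al^{-j}(\te)}\big(G(\al^{-j}(\te))\big)$ and $G$ is $\vfi$-invariant, every point in the support of $\mu_n$ lies in $G$; but $G$ need not be closed, so one must argue that $\mu(\overline G\setminus G)=0$ or, more cleanly, that $\mu\le K\,(\nu\times\leb)\mid G$ by applying Lemma~\ref{le:bdd-dist-limit} to the restricted measures. I expect this measurability/invariance bookkeeping — ensuring the weak$^*$ limit does not leak mass off $G$ — to be the main obstacle, handled exactly as in the proof that $\mu+\xi^{ac}$ is invariant and absolutely continuous in Proposition~\ref{pr:existenceofacim}, together with the observation that $\mu_n(\toro\times I_0)=\mu_n(G)$ for all $n$. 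Taking $b=\xi/(2K)$ (which depends only on $\la$) then completes the proof.
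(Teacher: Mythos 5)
The key inequality you derive, $\mu(\toro\times I_0)\ge \frac{\xi}{2}(\nu\times\leb)(G)$ together with $\mu\le K(\nu\times\leb)$, does not force $(\nu\times\leb)(G)\ge \xi/(2K)$; this is a logical slip. If $\mu$ is concentrated on $G$, those two inequalities give $\frac{\xi}{2}(\nu\times\leb)(G)\le\mu(G)\le K(\nu\times\leb)(G)$, which is a statement about the constants $\xi$ and $K$, not a lower bound on $(\nu\times\leb)(G)$; without concentration they give only the trivial upper bound $(\nu\times\leb)(G)\le 2K/\xi$. The construction of $\mu_n$ with $E=G$ produces a measure whose total mass scales \emph{with} $(\nu\times\leb)(G)$, so the inequalities are compatible with $(\nu\times\leb)(G)$ arbitrarily small, and no fixed $b>0$ comes out. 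To get a lower bound you would need a mass bound on $\mu$ that is \emph{independent} of $G$, and Corollary~\ref{MainCorollary} does not supply that.

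The missing ingredient, which the paper uses, is a density point argument inside a single fiber combined with ergodicity of $\nu$ for $\alpha$. The paper picks a generic $\theta_0$ with $\leb(G(\theta_0))>0$ and a density point $x_0$ of $G(\theta_0)$ with $\liminf\frac1n\sum_{i\le n} r_i(\theta_0,x_0)\ge3\constantB$ (using Theorem~\ref{PrincipalB} on $E=G(\theta_0)$). The interval $J_i(x_0)$ from Lemma~\ref{le:verticaldistortion} shrinks exponentially, so for large $i$ with $r_i\ge\constantB$ it lies inside a density-one neighborhood of $x_0$. Koebe distortion then shows $f^i_{\theta_0}(J_i^*(x_0)\cap G(\theta_0))$ has $\leb$-measure at least $\constantB/(4K)$, and forward invariance of $G$ puts this set inside $G(\alpha^i(\theta_0))$. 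Hence $\alpha^i(\theta_0)$ belongs to $B_G^{\constantB}:=\{\theta:\leb(G(\theta))\ge\constantB/(4K)\}$. Pliss's lemma says such $i$ have frequency at least $\xi$, and ergodicity of $(\alpha,\nu)$ converts this orbit frequency into $\nu(B_G^{\constantB})\ge\xi$. Then $(\nu\times\leb)(G)\ge\nu(B_G^{\constantB})\cdot\constantB/(4K)\ge\xi\constantB/(4K)=:b$. The density point is what produces a fiberwise lower bound $\constantB/(4K)$ that is \emph{independent} of the global size of $G$ — this is exactly what your measure-theoretic construction cannot see, because $\mu_n$ only ever sees $G$ through its total $\nu\times\leb$-mass.
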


This ensures that the ergodic basins $B_i=B(\mu_i)$ of the
measures provided by
Theorem~\ref{mcor:finiteergodicmeasures} has
$\nu\times\leb$-measure uniformly bounded away from
zero. Since these are pairwise disjoint subsets, their
number in a finite measure space must be finite.

For the proof of Proposition, we need the following result.
\begin{lemma} \label{le:verticaldistortion} Given
  $\constantB>0$, there exists $K_1>0$ such that, for any
  $i\in\N$ and any $(\te,x)\in \toro\times I_0$, if
  $r_i(\te,x)>\constantB$, there exists $J_i(x)\subset I_0$ such
  that
  $f_\te^i(J_i(x))=B(f_\te^i(x),\constantB/2)$,
  $f_\te^i$ restricted to $J_i(x)$ is a $C^3$
  diffeomorphism and
\begin{equation} \label{eq:verticaldistortion}
 \frac{1}{K_1}\leq \frac{|Df_\te^i(y)|}{|Df_\te^i(z)|}\leq K_1
\qquad\text{ for all } y,z\in J_i(x).
\end{equation}
\end{lemma}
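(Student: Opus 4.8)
The plan is to deduce this from the Koebe Principle for one-dimensional maps with non-positive Schwarzian derivative, exactly in the spirit of the proof of Lemma~\ref{le:bdd-dens}. Fix $\constantB>0$, $i\in\N$ and $(\te,x)\in\toro\times I_0$ with $r_i(\te,x)>\constantB$, and write $g:=f^i_\te$ and $T:=T_i(\te,x)$, the maximal interval containing $x$ on which $g$ is a $C^3$ diffeomorphism. If $x$ were an endpoint of $T$, one of the two components of $g(T)\setminus\{g(x)\}$ would be empty and $r_i(\te,x)$ would vanish; hence $x$ is interior to $T$. Since $r_i(\te,x)>\constantB$, both components of $g(T)\setminus\{g(x)\}$ have length larger than $\constantB$, so $g(T)\supset B(g(x),\constantB)$. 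I then set
\[
  J_i(x):=\big(g|_{T}\big)^{-1}\big(B(g(x),\constantB/2)\big)\subset T,
\]
which is an interval containing $x$ on which $g$ is a $C^3$ diffeomorphism with $g(J_i(x))=B(g(x),\constantB/2)$; this gives the first two assertions of the lemma.

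Next I would check that $g$ has non-positive Schwarzian derivative on $T$. By $(H_4)$ (resp. $(H_4^*)$) each $f_\te$ is $C^3$ with $Sf_\te\le 0$ (on each component of $I_0\setminus\disc_\te$ in the case of $(H_4^*)$); and by the very definition of $T=T_i(\te,x)$ the intermediate iterates $f^0_\te,\dots,f^{i-1}_\te$ map $T$ into the complement of $\crit\cup\disc$ along the fibre. Hence the composition formula $S(\psi\circ\phi)=\big((S\psi)\circ\phi\big)\cdot(\phi')^2+S\phi$, applied inductively to $g=f_{\al^{i-1}(\te)}\circ\cdots\circ f_\te$, yields $Sg\le 0$ on $T$.

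Finally, the interval $g(T)$ contains a $\tau$-scaled neighbourhood of $g(J_i(x))=B(g(x),\constantB/2)$ with $\tau=1/2$: the two components of $B(g(x),\constantB)\setminus B(g(x),\constantB/2)$ each have length $\constantB/2$, which is $\tfrac12$ of the length $\constantB$ of $B(g(x),\constantB/2)$, and both are contained in $g(T)$. The Koebe Principle then provides a constant $K=K(1/2)$, independent of $i$, $\te$ and $x$ (so depending on $\constantB$ only, in fact the universal Koebe constant for $\tau=1/2$), such that $\frac1K\le |Dg(y)|/|Dg(z)|\le K$ for all $y,z\in J_i(x)$, which is precisely \eqref{eq:verticaldistortion}.

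The only delicate point is the second step: one must use that the definition of $T_i(\te,x)$ as the maximal interval on which $f^i_\te$ is a $C^3$ diffeomorphism guarantees that every intermediate image of $T$ stays away from the critical/discontinuity set, so that each factor of the composition contributes a non-positive term to the Schwarzian and the chain rule applies without obstruction. Once this is in place the distortion estimate is the standard Koebe bound and nothing further is needed.
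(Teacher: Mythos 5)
Your proof is correct and follows essentially the same route as the paper: choose $J_i(x)$ as the preimage of $B(f_\theta^i(x),\constantB/2)$, note that $f_\theta^i(T_i)$ contains a $1/2$-scaled neighborhood of $f_\theta^i(J_i(x))$, and invoke the Koebe Principle. You spell out two points the paper leaves implicit — that $x$ is interior to $T_i$ and that the Schwarzian composition formula gives $Sg\le 0$ on $T_i$ — which is sound and makes the argument more self-contained, but the underlying mechanism is identical.
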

\begin{proof}
Let $(\te,x)\in \toro\times I_0$ such that $r_i(\te,x)>\constantB$.
By definition of $r_i$, there exists $T_i\subset I_0$ such that $x\in
T_i$, $f_\te^i$ restricted to $T_i$ is a $C^3$ diffeomorphism and the
connected components of $f_\te^i(T_i)\setminus \{f_\te^i(x)\}$ have
length $> \constantB$. Let us choose $J_i(x)\subset T_i$ such that
$f_\te^i(J_i(x))=B(f_\te^i(x),\constantB/2)$. Note that
$f_\te^i(T_i)$ contains an $1/2$-scaled neighborhood of
$f_\te^i(J_i(x))$. It means that both connected components of
$f_\te^i(T_i)\setminus f_\te^i(J_i(x))$ have length at least
$|f_\te^i(J_i(x))|/2$. By Koebe Principle (see \cite[Theorem
IV.1.2]{MS93}), there exists $K_1$ such that
(\ref{eq:verticaldistortion}) holds. The distortion $K_1>0$ does not
depend on the point $(\te,x)$, nor the iterate $i$.
\end{proof}

\begin{proof}[Proof of Proposition \ref{pr:finitelymanyergodic}]
Let $G\subset Z(\lambda)$ be a forward $\vfi$-invariant set, such
that $(\nu\times\leb)(G)>0$. Given $\lambda>0$, let us consider the
constant $\constantB>0$ given by Theorem \ref{PrincipalB}. Let $K_1>0$
the constant found on Lemma \ref{le:verticaldistortion}. Denoting by
$G(\te)$ the $\te$-section of $G$, i.e, $G(\te):=\{x\in I_0;
(\te,x)\in G \}$, let us define the measurable set
\begin{equation*}
B_{G}^{\constantB}:=\left\{\te\in \toro,
\:\leb\left(G(\te)\right)\geq 
 \frac{\constantB}{4K_1} \right\}
\end{equation*}
Since the measure $\nu$ is ergodic for the map $\al$, then
\begin{equation} \label{eq:measureofB_G}
 \lim_{n\to\infty} \frac{1}{n}\sum_{i=0}^{n-1}
\bigchi_{B_{G}^{\constantB}}(\al^i(\te))=\int
\bigchi_{B_{G}^{\constantB}} d\nu
\end{equation}
for all $\te$ in a $\nu$-full measure set. Let $\te_0\in\toro$ be a
point such that $\leb(G(\te_0))>0$ and (\ref{eq:measureofB_G})
holds for $\te=\te_0$. By Theorem \ref{PrincipalB} applied to the
set $E=G(\te_0)$, we can find a point $x_0\in
G(\te_0)$ such that $\liminf_{n\to\infty} \frac{1}{n}\sum_{i=1}^{n}
r_i(\te_0,x_0)\geq 3\constantB$. We can assume that $x_0$ is
a density point of $G(\te_0)$. Thus, there exists $\epsilon_0>0$ such
that for all $\epsilon<\epsilon_0$,
\begin{equation*}
 \frac{\leb(G(\te_0)\cap
B(x_0,\epsilon))}{\leb(B(x_0,\epsilon))}\geq \frac{1}{2}
\end{equation*}
On the other hand, we can find $N\in\N$ such that $\sum_{i=1}^{n}
r_i(\te_0,x_0)\geq 2\constantB n$ and $\log |Df_{\te_0}^n(x_0)|\geq
\la n$ for all $n\geq N$. Then, if $r_i(\te_0,x_0)\geq \constantB$,
for $i\geq N$, the interval $J_i(x_0)$ found on Lemma
\ref{le:verticaldistortion} is such that $|J_i(x_0)|\leq K_1\constantB
e^{-\la i}$. Therefore, there exists $n_0\geq N$ such that
$J_i(x_0)\subset B(x_0,\epsilon_0)$, provided $i\geq n_0$ (and
obviously only when $r_i(\te_0,x_0)\geq \constantB$). 

\begin{claim}
If $r_i(\te_0,x_0)\geq \constantB$ and $i\geq n_0$ then
$\al^i(\te_0)\in  B_{G}^{\constantB}$.
\end{claim}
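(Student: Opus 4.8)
The claim should follow from forward invariance of $G$ together with the bounded distortion on $J_i(x_0)$ furnished by Lemma~\ref{le:verticaldistortion}. Since $r_i(\te_0,x_0)\ge\constantB$, that lemma gives an interval $J_i(x_0)\ni x_0$ on which $f_{\te_0}^i$ is an injective $C^3$ map with $|Df_{\te_0}^i(y)|/|Df_{\te_0}^i(z)|\le K$ for all $y,z\in J_i(x_0)$ and with $f_{\te_0}^i(J_i(x_0))=B(f_{\te_0}^i(x_0),\constantB/2)$, so that $\leb(f_{\te_0}^i(J_i(x_0)))=\constantB$. Because $G$ is forward $\vfi$-invariant we have $\vfi^i(G)\subseteq G$, and the skew-product form $\vfi^i(\te_0,x)=(\al^i(\te_0),f_{\te_0}^i(x))$ then yields $f_{\te_0}^i\big(G(\te_0)\cap J_i(x_0)\big)\subseteq G(\al^i(\te_0))$. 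Hence it suffices to show $\leb\big(f_{\te_0}^i(G(\te_0)\cap J_i(x_0))\big)\ge\constantB/(4K)$.

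Using injectivity and distortion at most $K$ on $J_i(x_0)$, for every measurable $A\subseteq J_i(x_0)$ one has $\leb(f_{\te_0}^i(A))\le K\constantB\,\leb(A)/\leb(J_i(x_0))$; applying this to $A=J_i(x_0)\setminus G(\te_0)$ and subtracting from $\leb(f_{\te_0}^i(J_i(x_0)))=\constantB$ gives
\[
\leb\big(f_{\te_0}^i(G(\te_0)\cap J_i(x_0))\big)\ \ge\ \constantB\Big(1-K\,\frac{\leb(J_i(x_0)\setminus G(\te_0))}{\leb(J_i(x_0))}\Big).
\]
To control the relative measure of $J_i(x_0)\setminus G(\te_0)$, recall $|J_i(x_0)|\le K\constantB e^{-\la i}\to0$, so enlarging $n_0$ if necessary we may assume $|J_i(x_0)|<\epsilon_0$ for $i\ge n_0$; since $x_0\in J_i(x_0)$ we have $J_i(x_0)\subseteq B(x_0,|J_i(x_0)|)$. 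As $x_0$ is a density point of $G(\te_0)$, we may take $\epsilon_0$ small enough that $\leb(B(x_0,\epsilon)\setminus G(\te_0))\le\tfrac{1}{4K^2}\leb(B(x_0,\epsilon))$ for all $\epsilon<\epsilon_0$ (i.e. the density threshold, written as $\tfrac12$ earlier, is taken close enough to $1$); then $\leb(J_i(x_0)\setminus G(\te_0))\le\tfrac{1}{4K^2}\cdot 2|J_i(x_0)|$, hence $\leb(J_i(x_0)\setminus G(\te_0))/\leb(J_i(x_0))\le\tfrac{1}{2K^2}$.

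Combining the last two displays and using $K\ge1$,
\[
\leb(G(\al^i(\te_0)))\ \ge\ \leb\big(f_{\te_0}^i(G(\te_0)\cap J_i(x_0))\big)\ \ge\ \constantB\Big(1-\tfrac{1}{2K}\Big)\ \ge\ \frac{\constantB}{2}\ \ge\ \frac{\constantB}{4K},
\]
which is exactly the inequality defining $B_{G}^{\constantB}$, so $\al^i(\te_0)\in B_{G}^{\constantB}$. The mechanism is routine — forward invariance plus a Koebe-type distortion estimate plus the Lebesgue density theorem — and the only point that needs attention is the bookkeeping matching the distortion constant $K$ against the density threshold used to fix $\epsilon_0$ and against the constant $\tfrac{1}{4K}$ in the definition of $B_{G}^{\constantB}$; in particular the crude density bound $\tfrac12$ stated above is not enough by itself and must be sharpened when this claim is invoked.
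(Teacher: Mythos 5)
Your argument is correct in spirit but takes a different route from the paper, and it is worth noting where the two diverge. You work directly with the interval $J_i(x_0)$, bound $\leb\big(f_{\te_0}^i(J_i\setminus G(\te_0))\big)$ from above via distortion, and subtract; because $J_i(x_0)$ is not a ball centered at $x_0$, you have to bound the relative measure of $J_i\setminus G(\te_0)$ through the larger ball $B(x_0,|J_i|)$, and this costs you a factor that the $\tfrac12$ density threshold fixed earlier in the proof cannot absorb. You noticed this yourself and proposed to sharpen the threshold to $1-\tfrac{1}{4K^2}$ — this is fine (the Lebesgue density theorem supplies it), but it means the claim as stated cannot be proved from the setup already established in the proposition's proof without retroactively changing the choice of $\epsilon_0$.

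The paper avoids this entirely with one small extra idea: replace $J_i(x_0)$ by $J_i^*(x_0)$, the maximal \emph{ball} (subinterval) centered at $x_0$ contained in $J_i(x_0)$. Because $J_i^*(x_0)$ is a ball centered at $x_0$ of radius $<\epsilon_0$, the $\tfrac12$ density bound applies verbatim, and because $J_i^*$ is maximal one of its endpoints hits $\partial J_i$, so $f_{\te_0}^i(J_i^*)$ is an interval in $B(f_{\te_0}^i(x_0),\constantB/2)$ that contains $f_{\te_0}^i(x_0)$ and reaches the boundary, whence $\leb(f_{\te_0}^i(J_i^*))\ge\constantB/2$. Combining the two via distortion gives
\[
\leb\big(f_{\te_0}^i(J_i^*\cap G(\te_0))\big)\ \ge\ \frac{1}{K}\cdot\frac{\leb(J_i^*\cap G(\te_0))}{\leb(J_i^*)}\cdot\leb\big(f_{\te_0}^i(J_i^*)\big)\ \ge\ \frac{1}{2K}\cdot\frac{\constantB}{2}\ =\ \frac{\constantB}{4K},
\]
after which forward invariance yields $\leb(G(\al^i(\te_0)))\ge\constantB/4K$, exactly as in your final step. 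Both approaches buy the same conclusion; the paper's choice of $J_i^*$ lets the crude $\tfrac12$ threshold suffice and keeps the constants independent, while your complement argument is equally valid but forces the density threshold (and hence $\epsilon_0$) to depend on $K$.
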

\begin{proof}
Let $J_i^*(x_0)$ the maximal ball centered at
$x_0$ contained in $J_i(x_0)$. Using Lemma
\ref{le:verticaldistortion},
\begin{equation*}
\frac{\leb(f_{\te_0}^i(J_i^*(x_0)\cap
G(\te_0)))}{\leb(f_{\te_0}^i(J_i^*(x_0)))}\geq
\frac{1}{K_1}\frac{\leb(J_i^*(x_0)\cap
G(\te_0))}{\leb(J_i^*(x_0))}\geq \frac{1}{2K_1}
\end{equation*}
for $i\geq n_0$. Then we have that $\leb(f_{\te_0}^i(J_i^*(x_0)\cap
G(\te_0))\geq \constantB/4K_1$. Since $f_{\te_0}^i(G(\te_0))\subset
G(\al^i(\te_0))$ (by the forward $\fhi$-invariance of $G$), the
claim follows. 
\end{proof}
An immediate consequence of the claim is that for all $n\geq n_0$,
\begin{equation*}
 \sum_{i=n_0}^{n}\bigchi_{B_{G}^{\constantB}}(\al^i(\te_0))\geq
\#\:\{\,n_0\leq i\leq n ; \, r_i(\te_0,x_0)\geq\constantB\}
\end{equation*}

Now, using Pliss Lemma (see Lemma \ref{le:pliss}), there exists
$\zeta=\zeta(\constantB)>0$ such that for $n\geq n_0$,
\begin{equation*}
\frac{\#\:\{\,1\leq i\leq n ; \, r_i(\te_0,x_0)\geq
\constantB\}}{n}\geq \zeta
\end{equation*}
since $\sum_{i=1}^{n} r_i(\te_0,x_0)\geq 2\constantB n$, for $n\geq
n_0$. Hence the limit in (\ref{eq:measureofB_G}) for $\te=\te_0$ is
greater than $\zeta$. It means that $\nu(B_G^{\constantB})\geq \zeta$.
Thus, Proposition \ref{pr:finitelymanyergodic} follows considering
$b=\constantB\zeta/4K_1$.
\end{proof}

Finally we can conclude the proof of Theorems
\ref{mcor:finiteergodicmeasures} and \ref{mthm:PrincipalA}.

\begin{proof}[Proof of Theorem
  \ref{mcor:finiteergodicmeasures}]

  Assume that $(\nu\times\leb)(Z)>0$ (otherwise, there is
  nothing to prove).  By assumption, there exists $\la>0$
  such that $Z\setminus Z(\la) $ has zero
  $(\nu\times\leb)$-measure. Let $\mu_0$ be the
  $\fhi$-invariant probability measure absolutely continuous
  with respect to $\nu\times\leb$ given by Proposition
  \ref{pr:existenceofacim} and Lemma
  \ref{le:invariance-hat}.  Considering the normalized
  restriction to the forward invariant set $Z(\la)$, we can
  assume that $\mu_0(Z(\la))=1$. Since every invariant set,
  with positive $\nu\times\leb$-measure, has
  $\nu\times\leb$-measure greater than $b$ (by Proposition
  \ref{pr:finitelymanyergodic}), we can decompose $\mu_0$ in
  a finite number of ergodic components. Then
  $\mu_0=\sum_{i=1}^{s} a_i\mu_i$, where $a_i>0$,
  $\sum_{i=1}^{s} a_i=1$ and $\mu_i$ are ergodic
  $\fhi$-invariant absolutely continuous probability
  measures.

  If $Z_1=Z(\la)\setminus \cup_{i=1}^{s} B(\mu_i)$ still has
  positive $\nu\times\leb$-measure, then we can repeat the
  arguments of Section \ref{sec:acim} for the set
  $Z_1\subset Z(\lambda)$ instead of $Z(\la)$.  Repeating
  this argument, we obtain the ergodic components as in the
  statement of Theorem \ref{mcor:finiteergodicmeasures}
  such that $\nu\times\leb$-a.e. point in $Z(\la)$ is in the
  basin of one of these measures. The number of such measures
  is finite, since the basin of each of then is a collection
  of pairwise dijoint invariant sets with
  $\nu\times\leb$-positive measure, and Proposition
  \ref{pr:finitelymanyergodic} holds.
\end{proof}

\begin{proof}[Proof of Theorem \ref{mthm:PrincipalA}]
  Since $Z=\cup_{n\in \N} Z(1/n)$, the previous argument
  applied to each $Z(1/n)$ provides finitely many ergodic
  probability measures whose basins cover $Z(1/n)$, for each
  $n\ge1$. This concludes the proof of Theorem
  \ref{mthm:PrincipalA}.
\end{proof}

\section{SRB measures for random non-uniformly expanding
  maps}
\label{sec:SRBrandommap}

Let $(\toro, \mathcal{B}_{\toro},\nu, \al,f)$ be an admissible random
non-uniformly expanding map on $I_0$. Let us consider the associated
skew-product $\fhi$ defined on $\toro\times I_0$. 
By Theorem \ref{mcor:finiteergodicmeasures}, there exist $\mu_1,
\ldots, \mu_t$, $\fhi$-invariant ergodic probabilities, such that
$(\nu\times\leb)$-a.e. $(\te,x)$ is in the basin
of one of these measures. Denote by $B_i$ the ergodic basin $B(\mu_i)$
of the measure $\mu_i$, for $1\leq i\leq t$. As usual,
$B_i(\te)$ denotes the $\te$-section of the set $B_i$.

\begin{proof}[Proof of Theorem \ref{mthm:finitelymanySRB}]
Define $\prob_i$ as the projection on $I_0$ of $\mu_i$. By a
straightforward calculation, we can prove that
$RB_\te(\prob_i)\supseteq B_i(\te)$.
As $\mu_i$ is absolutely continuous with respect to $\nu\times\leb$,
then $\nu\times\leb(B_i)>0$. Since $B_i$ is $\fhi$-invariant and
$\nu$ is $\al$-ergodic, then $\leb(B_i(\te))>0$ for $\nu$-almost
every $\te\in\toro$. It implies that $\prob_i$ is a SRB probability
for the random dynamical system.

Since $\leb\left( I_0\setminus \cup_{i=1}^{t}
B_i(\te)\right)=0$, for $\nu$-almost every $\te\in\toro$, then
$\nu$-almost surely, the union of the random basins of
$\prob_1,\ldots, \prob_t$ has total Lebesgue measure. Clearly, these
measures are absolutely continuous with respect to Lebesgue measure.
\end{proof}


\section{Higher dimensional fibers}
\label{sec:higher-dimens-fibers-0}

Here we outline the arguments in the higher-dimensional
fiber case. The strategy is the same as the one presented
for one-dimensional fibers. 

We start by considering the sequences $\eta_n(\theta)$ and
$\eta_n$ as in Section~\ref{subs:basic-invariant-measures}.
Then we use the notion of hyperbolic times from~\cite{ABV00}
to redefine $\mu_n(\theta)$ replacing
$H_n(\theta,\constantB)$ by $H_n(\theta)$.  Finally we just
have to obtain the analogous results to
Corollary~\ref{MainCorollary} and Lemmas~\ref{le:bdd-dens},
~\ref{le:bdd-dens-mu} and~\ref{le:bdd-dist-limit}.

After this the argument  follows the proof of
Theorem~\ref{mcor:finiteergodicmeasures} through
Lemmas~\ref{le:bdd-dist-limit} and \ref{le:inv-Leb-decomp}.

In what follows, since hyperbolic times have been
extensively investigated recently, we cite most of the
results from other published works.

\subsection{Hyperbolic times and their properties}
\label{sec:hyperb-times-their}

Given $0<\sigma<1$ and $b,\delta>0$, we say that the positive
integer $n$ is a $(\sigma,\delta,b)$-hyperbolic time for
$(\theta,x)\in\toro\times\Y$ if
\begin{align}
  \label{eq:tempo-hip}
  \prod_{j=n-k}^{n-1}\big\|
  Df_{\alpha^j(\theta)}\big(f_\theta^j(x)\big)^{-1}\big\|
  \le \sigma^k 
  \quad\mbox{and}\quad 
  \dist_\delta \big(
  f_\theta^k(x),\cc\cap(\{\alpha^k(\theta)\}\times\Y) \big) \ge
  e^{-bk} \quad\mbox{for}\quad k=0,\dots,n-1.
\end{align}

We now outline the properties of these special times. For detailed
proofs see~\cite[Lemma 5.2, Corollary 5.3]{ABV00} and~\cite[Proposition
2.6, Corollary 2.7, Proposition 5.2]{AA03}.

\begin{proposition}
  \label{pr:prophyptimes}
  There are constants $C_1,\delta_1>0$ depending on
  $(\sigma,\de,b)$ and $\vfi$ only such that, if $n$ is
  $(\sigma,\de,b)$-hyperbolic time for $(\theta,x)$, then
  there are neighborhoods $V_n(\theta,x)$ of $(\theta,x)$ on
  $\{\theta\}\times\Y$, such that
\begin{enumerate} 
\item $f^n_{\theta}\mid V_n(\theta,x)$ maps
  $V_n(\theta,x)$ diffeomorphically to the ball of radius
  $\delta_1$ around $f_\theta^n(x)$ inside
  $\{\alpha^n(\theta)\}\times\Y$;
\item 
$\dist\big(f_{\theta}^{n-k}(y),f^{n-k}_{\theta}(z)\big)\le
  \sigma^{k/2}\cdot
  \dist\big(f_\theta^{n}(y),f_\theta^{n}(z)\big)$
for every $0\leq k\leq n-1$ and $y,z\in V_n(\theta,x)$;
\item for $y,z\in V_n(\theta,x)$
\[
\frac1{C_1}\le \frac{\big|\det
  Df^{n}_{\theta}(y)\big|}{\big|\det
  Df_{\theta}^{n}(z)\big|} \le C_1.
\]
\end{enumerate}
\end{proposition}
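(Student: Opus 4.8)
The plan is to transfer, essentially verbatim with the base point $\theta$ carried along as a passive parameter, the construction of hyperbolic preballs developed for deterministic non-uniformly expanding maps in~\cite[Proposition 2.8]{ABV00} and adapted to skew-products in~\cite[Propositions 2.6, 5.2 and Corollary 2.7]{AA03}. The first step is a \emph{backward bounded distortion} statement along the fibers. Using conditions $(S2)$ and $(S3)$ together with the slow-recurrence part of the hyperbolic-time hypothesis, and choosing $b$ small compared with $-\log\sigma$ and $\delta$ small compared with the non-degeneracy scale of $\cc$, one shows that there are constants $C_1>0$ and $\delta_1>0$ (depending only on $\sigma,\delta,b$ and $\vfi$) such that, when the ball of radius $\delta_1$ around $f_\theta^n(x)$ inside $\{\alpha^n(\theta)\}\times\Y$ is pulled back along the orbit $f_\theta^{n-1}(x),\dots,x$, the resulting sets $V_k(\theta,x)$ stay inside balls of radius comparable to $\dist\big(f_\theta^{n-k}(x),\cc\cap(\{\alpha^{n-k}(\theta)\}\times\Y)\big)$, so that $(S1)$--$(S3)$ are applicable on each $V_k$ with base point on the orbit and the multiplicative distortion of $f^k_{\alpha^{n-k}(\theta)}$ on $V_k$ is bounded by $C_1$. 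The geometric factors $e^{-bk}$ appearing in $(S2)$--$(S3)$ make the accumulated distortion a convergent geometric series, which is exactly where the hypothesis is used.

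With bounded distortion in hand, the preballs are built by a finite backward induction: $V_0$ is the $\delta_1$-ball around $f_\theta^n(x)$, and $V_k$ is the connected component of $f_{\alpha^{n-k}(\theta)}^{-1}(V_{k-1})$ containing $f_\theta^{n-k}(x)$. The inductive claim, proved simultaneously, is that $V_k$ has diameter at most $\sigma^{k/2}\delta_1$; the crucial point is that, once $b$ has been chosen small relative to $-\log\sigma$, this bound is smaller than half the distance of $f_\theta^{n-k}(x)$ to $\cc$ guaranteed by the slow-recurrence hypothesis, so $V_k$ misses $\cc$, the restriction of $f_{\alpha^{n-k}(\theta)}$ to $V_k$ is a diffeomorphism onto $V_{k-1}$, and $(S1)$ applies and propagates the diameter bound. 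Item (1) is then immediate from the diffeomorphism property and the choice $V_0=B_{\delta_1}(f_\theta^n(x))$; item (2) follows from the one-step inequality $\dist\big(f_{\alpha^j(\theta)}(y),f_{\alpha^j(\theta)}(z)\big)\ge\|Df_{\alpha^j(\theta)}(\cdot)^{-1}\|^{-1}\dist(y,z)$ iterated over a block and combined with the derivative bound and the distortion control; and item (3) is the bounded distortion estimate of the first step.

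The step I expect to be the main obstacle is precisely this simultaneous induction: one must fix the three scales — the radius $\delta_1$ of the target ball, the truncation level $\delta$ in the slow-recurrence hypothesis, and the recurrence exponent $b$ — in the correct order and with the correct interdependence so that the preballs contract geometrically at a rate strictly faster than the rate at which the orbit is permitted to approach $\cc$; if this balance fails, some $V_k$ meets $\cc$ and the whole chain of pull-backs collapses. Everything else — the measurable dependence of $(\theta,x)\mapsto V_k(\theta,x)$, which follows from $(H_5)$, $(H_6)$ and the measurability arguments of Appendix~\ref{sec:measur}, and the fact that the constants depend only on $\sigma,\delta,b$ and $\vfi$ — is routine bookkeeping. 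Accordingly, I would write the proof as a step-by-step verification that the cited statements in~\cite[Proposition 2.8]{ABV00} and~\cite[Propositions 2.6, 5.2 and Corollary 2.7]{AA03} go through unchanged with $\theta$ as a passive parameter, pointing out only the places where the base dynamics $\alpha$ must be threaded through.
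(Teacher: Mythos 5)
Your proposal is correct and coincides with the paper's own treatment: the paper offers no new argument for this proposition, simply referring to \cite[Proposition 2.8]{ABV00} and \cite[Propositions 2.6, 5.2, Corollary 2.7]{AA03}, which is precisely the transfer (with $\theta$ carried along as a passive parameter) that you propose. Your sketch of the backward induction on preballs and the hierarchy of scales $\delta_1,\delta,b$ accurately reproduces the mechanics of that cited construction.
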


The following ensures existence of infinitely many
hyperbolic times for Lebesgue almost every point for
non-uniformly expanding maps with slow recurrence to the
singular set. A complete proof can be found in~\cite[Section
5]{ABV00}.

\begin{theorem}
\label{thm:tempos-hip-existem}
Let $\vfi:\toro\times\Y\to\toro\times\Y$ be as in the
statement of Theorem~\ref{mthm:PrincipalB}, i.e.,
non-uniformly expanding along the fibers according to
$\nu\times\Leb$, on a subset $Z$ of $\toro\times\Y$.

Then there are $\sigma\in(0,1)$, $\delta,b>0$ and there
exists $\rho=\rho(\sigma,\delta,b)>0$ such that
$\nu\times\Leb$-a.e.  $(\theta,x)\in Z$ has
infinitely many $(\sigma,\de,b)$-hyperbolic times.  Moreover
if we write $0<n_1<n_2<n_2<\dots$ for the hyperbolic times
of $(\theta,x)\in Z$, then their asymptotic frequency satisfies
\[
\liminf_{N\to\infty}\frac{\#\{ k\ge1 : n_k\le
  N\}}{N}\ge\rho
\quad\mbox{for}\quad \nu\times\Leb\mbox{-a.e.  }
(\theta,x)\in Z.
\]
\end{theorem}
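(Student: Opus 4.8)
The plan is to specialize to this fibred setting the construction of hyperbolic times of Alves--Bonatti--Viana \cite{ABV00}, in the form adapted to random perturbations in \cite{AA03}; the only genuinely new point is that the base $\alpha$ is an arbitrary $\nu$-preserving measurable map, but since the whole construction is carried out fibrewise this causes no difficulty. First I would reduce to uniform constants. For each integer $m\ge1$ set $c_1=1/m$; by the slow-recurrence hypothesis \eqref{eq:slow-rec} choose $\varepsilon=\varepsilon(m)>0$ small --- small enough, relative to $c_1$, $B$ and $\beta$, for the estimates of \cite[Section~5]{ABV00} to close --- and let $\delta=\delta(m)>0$ be the corresponding constant. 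Let $E_m\subset\toro\times\Y$ be the set of points $(\theta,x)$ at which simultaneously
\begin{align*}
\limsup_{n\to\infty}\frac1n\sum_{j=0}^{n-1}\log\big\|Df_{\alpha^j(\theta)}(f_\theta^j(x))^{-1}\big\| & \le-2c_1, \\
\limsup_{n\to\infty}\frac1n\sum_{j=0}^{n-1}-\log\dist_\delta\big(f_\theta^j(x),\cc_{\alpha^j(\theta)}\big) & \le\varepsilon.
\end{align*}
By \eqref{eq:pos-lyap-vert} and \eqref{eq:slow-rec} the set $\bigcup_{m\ge1}E_m$ has full $\nu\times\Leb$ measure, so it suffices to produce, for each fixed $m$ and $\nu\times\Leb$-a.e.\ point of $E_m$, a positive lower density of hyperbolic times, for a choice of $\sigma=\sigma(m)\in(0,1)$ close to $1$ and $b=b(m)>0$ small, depending only on $c_1$ and $\varepsilon$.

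Next, on $E_m$ I would run the Pliss argument (Lemma~\ref{le:pliss}) fibrewise, i.e.\ for each fixed $\theta$ along the sequence of maps $\{f_{\alpha^j(\theta)}\}_{j\ge0}$. The first limsup bound feeds the expansion part of \eqref{eq:tempo-hip} and the second feeds the non-recurrence part; the non-degeneracy conditions $(S1)$--$(S3)$, together with the slow-recurrence estimate, are used exactly as in \cite[Section~5]{ABV00} and \cite[Propositions~2.6 and~5.2]{AA03} to control the Birkhoff averages of $\log\|Df_\theta^{-1}\|$ in spite of the unboundedness of this function near $\cc$; after the truncation this yields, via Pliss, a set of integers $n$ of density at least some $\rho_0=\rho_0(c_1,\varepsilon)>0$ which are $(\sigma,\delta,b)$-hyperbolic times of $(\theta,x)$. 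Crucially $\rho_0$ depends only on $c_1,\varepsilon$ and on the constants $B,\beta$ and the uniform bound $p$ from $(H_1)$, hence not on $\theta$ nor on the point $x\in E_m(\theta)$. Integrating over $\theta$ with respect to $\nu$ --- legitimate once one records, as in Appendix~\ref{sec:measur}, that the relevant sets depend measurably on $\theta$ --- and using the $\alpha$-invariance of $\nu$, one concludes that $\nu\times\Leb$-a.e.\ point of $E_m$ has infinitely many $(\sigma,\delta,b)$-hyperbolic times with lower frequency at least $\rho_0$. Relabelling $\rho=\rho(\sigma,\delta,b)$ for each of the countably many choices and taking the union over $m$ then gives the statement.

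The main obstacle is the step hidden inside the second paragraph: because $\|Df_\theta(x)^{-1}\|$ is in general neither bounded above nor bounded below as $x$ approaches the critical/singular set $\cc$, the sequence $\log\|Df_{\alpha^j(\theta)}(f_\theta^j(x))^{-1}\|$ need not be bounded and Lemma~\ref{le:pliss} cannot be applied to it directly. The remedy --- which is precisely the reason the slow-recurrence condition \eqref{eq:slow-rec} and the non-degeneracy $(S1)$--$(S3)$ are assumed --- is to show that the orbit times at which $f_\theta^j(x)$ lies within $\delta$ of $\cc$ have small upper density and contribute only an $O(\varepsilon)$ amount to the Birkhoff sums, so that truncating the sequence at those times produces a bounded sequence with essentially the same averages, to which Pliss does apply, while the non-recurrence part of \eqref{eq:tempo-hip} takes care of the truncated times. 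Since \cite[Section~5]{ABV00} performs this truncation in complete detail for a single non-uniformly expanding map and \cite{AA03} does so in the random setting, and neither argument uses anything about the base map beyond its measurability and the invariance of $\nu$, I would carry out the reduction and the bookkeeping of constants above in detail and invoke those references for the truncation estimates themselves.
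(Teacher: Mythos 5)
Your proposal follows the same route the paper takes: the paper does not prove Theorem~\ref{thm:tempos-hip-existem} directly but states that ``a complete proof can be found in~\cite[Section 5]{ABV00}'' (together with the random adaptation in~\cite{AA03}), which is precisely the reduction and the set of references you invoke. Your additional detail --- the reduction to the uniform-constant sets $E_m$, the fibrewise Pliss argument, and the identification of the unboundedness of $\log\|Df_\theta^{-1}\|$ near $\cc$ as the point where slow recurrence and $(S1)$--$(S3)$ are needed --- is a faithful sketch of the argument those references carry out, so the approach is essentially identical.
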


Now we define, in this setting
\begin{align*}
  \H_n(\sigma,\delta,b):=\{(\theta,x)\in\toro\times\Y: n
  \text{  is a $(\sigma,\delta,b)$-hyperbolic time for  } (\theta,x)\}
\end{align*}
and, having fixed $\sigma,\delta,b$ according to
Theorem~\ref{thm:tempos-hip-existem}, we set
\begin{align*}
  H_n(\theta) := (\{\theta\}\times\Y)\cap\H_n(\sigma,\delta,b).
\end{align*}

\subsection{Hyperbolic times on fibers}
\label{sec:hyperb-times-fibers}

Now we are able to state and prove the analogous result to
Corollary~\ref{MainCorollary} with the same arguments.
\begin{lemma}
  \label{le:positive-mass-infinity}
  Given $\lambda>0$, let $Z(\lambda)\subset Z\subset\toro\times\Y$ be
  such that $(\theta,x)\in Z(\lambda)$ satisfies
\begin{align*}
  \limsup_{n\to+\infty}\frac1n\sum_{j=0}^{n-1}
  \log\|Df_{\alpha^j(\theta)}(f_\theta^j(x))^{-1}\|<- 2\lambda<0.
\end{align*}
If $n$ is big enough we have $ \int \frac{1}{n}\soma
\Leb\left(H_i(\te)\right) \,d\nu(\te) \geq
\frac{\rho}2(\nu\times\Leb)(Z(\lambda))$, where $\rho>0$ is
given by Theorem~\ref{thm:tempos-hip-existem}.
\end{lemma}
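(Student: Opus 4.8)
The plan is to follow the proof of Corollary~\ref{MainCorollary} almost verbatim, with Theorem~\ref{thm:tempos-hip-existem} taking over the role played in the one-dimensional case by Theorem~\ref{PrincipalB} together with the Pliss Lemma~\ref{le:pliss}; since the asymptotic-frequency estimate built into Theorem~\ref{thm:tempos-hip-existem} already incorporates the Pliss argument, no separate application of Lemma~\ref{le:pliss} is needed. I will in fact establish the slightly stronger estimate
\[
\int \frac1n\sum_{i=1}^n \Leb\big(\H_i(\theta)\cap E(\theta)\big)\,d\nu(\theta)
\ \ge\ \frac\rho2\,(\nu\times\Leb)(E)
\qquad\text{for all large }n,
\]
where $E(\theta)$ is the $\theta$-section of $E$; this is the form actually used to run the argument of Section~\ref{sec:acim} in the present setting, and it implies the statement because $\Leb(\H_i(\theta))\ge\Leb(\H_i(\theta)\cap E(\theta))$.

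Fix $\sigma\in(0,1)$ and $\delta,b,\rho>0$ as in Theorem~\ref{thm:tempos-hip-existem}. Using the joint measurability of the sets $\H_i(\sigma,\delta,b)$ --- which rests on assumption $(H_6)$ and the measurability results of Appendix~\ref{sec:measur} --- Tonelli's theorem lets me rewrite
\[
\int \frac1n\sum_{i=1}^n \Leb\big(\H_i(\theta)\cap E(\theta)\big)\,d\nu(\theta)
= \int_{\toro\times\Y} \chi_E(\theta,x)\,\Big(\frac1n\sum_{i=1}^n \chi_{\H_i(\sigma,\delta,b)}(\theta,x)\Big)\,d(\nu\times\Leb),
\]
where the inner average is precisely the relative frequency of $(\sigma,\delta,b)$-hyperbolic times of $(\theta,x)$ among $\{1,\dots,n\}$ and is bounded above by $1$. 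Since $\vfi$ is non-uniformly expanding along the fibers, Theorem~\ref{thm:tempos-hip-existem} ensures that for $\nu\times\Leb$-a.e.\ $(\theta,x)$, in particular for $\nu\times\Leb$-a.e.\ $(\theta,x)\in E$, this frequency has $\liminf_{n\to\infty}$ at least $\rho$. Hence the nonnegative integrands above have $\liminf_{n\to\infty}$ at least $\rho\,\chi_E$ almost everywhere, and Fatou's Lemma gives
\[
\liminf_{n\to\infty}\int \frac1n\sum_{i=1}^n \Leb\big(\H_i(\theta)\cap E(\theta)\big)\,d\nu(\theta)
\ \ge\ \rho\,(\nu\times\Leb)(E).
\]
Because $\rho/2<\rho$, the displayed estimate holds for all sufficiently large $n$, and the lemma follows.

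There is essentially no new analytic obstacle here: the substantive content --- existence of infinitely many hyperbolic times with positive asymptotic frequency for a.e.\ point --- is imported wholesale from Theorem~\ref{thm:tempos-hip-existem}, i.e.\ from \cite{ABV00,AA03}, just as Theorem~\ref{PrincipalB} was invoked in the one-dimensional proof. The two points that need checking are the joint measurability of $(\theta,x)\mapsto\chi_{\H_i(\sigma,\delta,b)}(\theta,x)$, required to apply Tonelli's theorem (handled in Appendix~\ref{sec:measur}), and the uniformity in $\theta$ of the constants $\sigma,\delta,b,\rho$, so that the limit may be taken after integrating over $\theta$ (these depend only on $\vfi$ and the expansion data, by Theorem~\ref{thm:tempos-hip-existem}).
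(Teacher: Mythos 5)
Your proof is correct and follows essentially the route the paper intends (the paper gives no explicit proof of this lemma, only asserting that the argument of Corollary~\ref{MainCorollary} carries over). You correctly observe that in the higher-dimensional setting Theorem~\ref{thm:tempos-hip-existem} already packages the Pliss-type frequency estimate, so no separate application of Lemma~\ref{le:pliss} is required, and your use of Tonelli followed by Fatou in place of the fiberwise Fubini plus Dominated Convergence argument of Corollary~\ref{MainCorollary} is a harmless and slightly cleaner variation of the same idea.
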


We assume the measurability of $H_n(\theta)$ in what
follows. This will be proved in Appendix~\ref{sec:measur}.


We define the  measures $\mu_n(\theta)$ on $\Y$, for every $\te\in
\toro$ and every $n\in\N$, adapting \eqref{eq:mu-n-theta} as
follows
\begin{equation*}
  \mu_n(\te)=\frac{1}{n} \sum_{j=1}^{n}
  (f_{\al^{-j}(\te)}^{j})_{*} \big(\Leb \mid
  Z(\alpha^{-j}(\theta),\lambda)\cap
H_{j}(\al^{-j}(\te))\big).
\end{equation*}
and then we define the measures $\mu_n$ on $\toro\times\Y$
as in~\eqref{eq:mu_n=int-mu_n-theta}.
We need to show that these measures are well-defined and
again this is proved in Appendix~\ref{sec:measur}.



\begin{lemma}\label{le:bdd-dens-hyptimes}
  There exists $K>0$ such that $\mu_n(\te)(A)\leq
  K\cdot\Leb(A)$ for any measurable subset $A\subset\Y$ and
  every $\te\in\toro, n\in\N$.
\end{lemma}

The proof of this result 
follows~\cite[Proposition 5.2]{AA03}.

\begin{proof}[Proof of Lemma~\ref{le:bdd-dens-hyptimes}]
  Take $\delta_1>0$ given by
  Proposition~\ref{pr:prophyptimes}. 
  It is sufficient to
  prove that there is some uniform constant $\widetilde{K}>0$ such that
  if $A$ is a Borel set in $\{\theta\}\times\Y$ with
  diameter smaller than $\delta_1/2$ then
 $$
 \Leb\big((f_{\alpha^{-n}(\theta)}^{n})^{-1}(A)\cap
 Z(\alpha^{-n}(\theta),\lambda)\cap
 H_{n}(\al^{-n}(\te))\big)\big)\leq \widetilde{K} \Leb(A).
 $$
 Let $A$ be a Borel set in $\{\theta\}\times\Y$ with
 diameter smaller than $\delta_1/2$ and $B$ an open ball of
 radius $\delta_1/2$ containing $A$. We may write
 \begin{align*}
   (f_{\alpha^{-n}(\theta)}^{n})^{-1}(B)=\bigcup_{k\geq
     1}B_k,
 \end{align*}
 where $(B_k)_{k\geq 1}$ is a (possibly finite) family of
 two-by-two disjoint open sets in
 $\{\alpha^{-n}(\theta)\}\times\Y$. Discarding those $B_k$
 that do not intersect $Z(\alpha^{-n}(\theta),\lambda)\cap
 H_{n}(\al^{-n}(\te))$, we choose for each $k\geq 1$ a point
 $x_k\in Z(\alpha^{-n}(\theta),\lambda)\cap
 H_{n}(\al^{-n}(\te))\cap B_k$. 

 For $k\geq 1$ let $V_n(\alpha^{-n}(\theta),x_k)$ be the
 neighborhood of $x_k$ in $\{\alpha^{-n}(\theta)\}\times\Y$
 given by Proposition~\ref{pr:prophyptimes}. Since $B$ is
 contained in $B\big(f_{\alpha^{-n}(\theta)}^n(x_k),
 {\de_1}\big)$, the ball of radius $\de_1$ around
 $f_{\alpha^{-n}(\theta)}^n(x_k)$ in $\{\theta\}\times\Y$,
 and $f_{\alpha^{-n}(\theta)}^n$ is a diffeomorphism from
 $V_n(\alpha^{-n}(\theta),x_k)$ onto
 $B\big(f_{\alpha^{-n}(\theta)}^n(x_k), {\de_1}\big)$, we
 must have $B_k\subset V_n(\alpha^{-n}(\theta),x_k)$ (recall
 that by our choice of $B_k$ we have
 $f_{\alpha^{-n}(\theta)}^n(B_k)\subset B$). 

 As a consequence of this and item (3) of
 Proposition~\ref{pr:prophyptimes}, we have for every $k$
 that the map $f_{\alpha^{-n}(\theta)}^n\mid B_k\colon B_k\rightarrow B$
 is a diffeomorphism with bounded distortion:
 \begin{align*}
   \frac{1}{C_1} \leq \frac{|\det
     Df_{\alpha^{-n}(\theta)}^n(y)|} {|\det
     Df_{\alpha^{-n}(\theta)}^n(z)|} \leq C_1
   \quad\text{for all}\quad y,z\in B_k.
 \end{align*}
 This finally gives that $\Leb
 \big(f_{\alpha^{-n}(\theta)}^{-n}(A)\cap
 Z(\alpha^{-n}(\theta),\lambda)\cap
 H_n(\alpha^{-n}(\theta))\big)$ is bounded from above by
 \begin{align*}
   \sum_{k}\Leb\big(f_{\alpha^{-n}(\theta)}^{-n}(A\cap
   B)\cap B_k\big)
   \leq 
   \sum_{k}C_1\frac{\Leb(A\cap B)}{\Leb(B)}\Leb(B_k)
   \leq
   \widetilde{K} \Leb(A), 
\end{align*} 
where $\widetilde{K}>0$ is a constant 
 only depending on $C_1$, on the volume of the ball $B$ of
 radius $\delta_1/2$, and on the volume of $\Y$.
\end{proof}

The analogous statements to Lemmas~\ref{le:bdd-dens-mu}
and~\ref{le:bdd-dist-limit} are proved in the exact same
way.  At this point, we have the analogous results to
Corollary~\ref{MainCorollary} and Lemmas~\ref{le:bdd-dens},
~\ref{le:bdd-dens-mu} and~\ref{le:bdd-dist-limit}. The rest
of the argument proving the existence of absolutely
continuous invariant measures is entirely analogous. We also
obtain a similar statement to
Proposition~\ref{pr:existenceofacim}.

For the ergodic decomposition, the
  arguments are the same as in
  Section~\ref{sec:finitely-many-ergodi}, including a result
  analogous to Proposition~\ref{pr:finitelymanyergodic}
  whose proof is standard and follows \cite[Lemma
  5.6]{ABV00} using the bounded distortion property provided
  by item (3) of Proposition~\ref{pr:prophyptimes}.

\subsection{Non-invertible base map with higher-dimensional
  fibers}
\label{sec:case-with-higher-dim-fibers}

With the notation introduced in Section~\ref{sec:Remove-H2},
we define the map $\hat{\fhi}:\hat{\toro}\times \Y\ra
\hat{\toro}\times \Y$,
$\hat{\fhi}(\hat{\te},x)=(\hat{\al}(\hat{\te}),
\hat{f}(\hat{\te},x))$, where
$\hat{f}(\hat{\te},x)=f(\te_{0},x)$. In the exact same
manner as in Section~\ref{sec:Remove-H2}, we deduce that
this map satisfies conditions $(H_1), (H_2^*)$ and $(H_3)$,
if $\varphi$ satisfies conditions $(H_1)$ through $(H_3)$.

Moreover the argument about relative compactness and the
proofs of Lemmas~\ref{le:tightness}
and~\ref{le:invariance-hat} need no change.
We are left to show that if $\vfi$ is non-uniformly
expanding along the fibers, then $\hat\vfi$ is likewise.
But this follows from
\begin{itemize}
\item the easy observation that $
  \hat\vfi^k(\hat\theta,x)=(\sigma^k(\hat\theta),f^k_{\theta_0}(x))
  $;
\item together with the fact that the full
  $\nu\times\Leb$-measure subset $W$ of $\toro\times\Y$
  satisfying the conditions (\ref{eq:pos-lyap-vert}) and
  (\ref{eq:slow-rec}) of non-uniform expansion and slow
  recurrence provides the set $\hat W=\pi^{-1}(W)$ which
  also has full $\hat\nu\times\Leb$-measure on
  $\hat\toro\times\Y$.
\end{itemize}
So the points $(\hat\theta,x)\in\hat W$ will satisfy
conditions (\ref{eq:pos-lyap-vert}) and
(\ref{eq:slow-rec}). Hence $\hat\varphi$ is non-uniformly
expanding along the fibers, with a bijection $\hat\alpha$ as
the base transformation.

We can now apply the same arguments of
Sections~\ref{subs:basic-invariant-measures} and
\ref{sec:acim} to $\hat\varphi$. So our main results also
hold if we replace condition $(H_2^*)$ by condition $(H_2)$.


\appendix

\section{Measurability}
\label{sec:measur}

Here we prove that the measures $\eta_n$ defined on Section
\ref{subs:basic-invariant-measures} together with the
measures $\mu_n$ defined on Section \ref{sec:acim} are
well-defined. We consider separately the case with one
dimensional fibers and the case with higher dimensional
fibers.


\subsection{The measures $\eta_n$ are well defined}
\label{sec:measur-eta_n}

By the Hahn Extension Theorem, 
it is enough to define the measures on rectangles $A\times J$ with
$A\in
\mathcal{B}_{\toro}$ and $J\in \mathcal{B}_{I_0}$. It easily
follows from
\begin{proposition} \label{measuresetan} Let $J\subset I_0$
  be a Borel set. For every $n\in\N$, the function $\toro\ni
  \te\mapsto \eta_n(\te)(J)$ is measurable.
\end{proposition}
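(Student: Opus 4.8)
The plan is to reduce the claim to two standard facts: the measurability of the iterated skew-product, and the measurability of the section-measure function for sets in a product $\sigma$-algebra.

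First I would unwind the definition: $\eta_n(\te)(J)=\frac1n\sum_{j=1}^{n}\leb\big((f^{j}_{\al^{-j}(\te)})^{-1}(J)\big)$, so since a finite sum of measurable functions is measurable it suffices to treat each summand separately. We are working under $(H_2^*)$, so the base map $\al$ is a bimeasurable bijection; hence $\al^{-1}$, and therefore $\al^{-j}=(\al^{-1})^j$, is $\mathcal{B}_\toro$-measurable, and the map $h_j:\toro\times I_0\to\toro\times I_0$, $h_j(\te,x)=(\al^{-j}(\te),x)$, is measurable. The skew-product $\vfi$ is measurable by our standing assumption, hence so is $\vfi^{j}$, and a direct computation gives $\vfi^{j}\circ h_j(\te,x)=(\te,f^{j}_{\al^{-j}(\te)}(x))$. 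Composing with the continuous projection $\pi_2:\toro\times I_0\to I_0$ we obtain that $g_j:=\pi_2\circ\vfi^{j}\circ h_j$ is measurable and satisfies $g_j(\te,x)=f^{j}_{\al^{-j}(\te)}(x)$. Therefore $E_j:=g_j^{-1}(J)$ belongs to $\mathcal{B}_{\toro\times I_0}=\mathcal{B}_\toro\times\mathcal{B}_{I_0}$ (the equality because both factors are separable metric spaces, as already noted in the paper), and $\leb\big((f^{j}_{\al^{-j}(\te)})^{-1}(J)\big)$ is exactly $\leb$ of the $\te$-section $(E_j)_\te$.

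It then remains to show that for any $E\in\mathcal{B}_\toro\times\mathcal{B}_{I_0}$ the function $\te\mapsto\leb(E_\te)$ is $\mathcal{B}_\toro$-measurable, which I would prove by a Dynkin $\pi$-$\lambda$ argument. Let $\mathcal{M}$ be the family of $E$ in the product $\sigma$-algebra for which the section-measure function is measurable. The measurable rectangles $A\times B$ lie in $\mathcal{M}$, since $\leb((A\times B)_\te)=\leb(B)\,\chi_A(\te)$, and they form a $\pi$-system generating $\mathcal{B}_\toro\times\mathcal{B}_{I_0}$. Moreover $\mathcal{M}$ is a $\lambda$-system: it contains $\toro\times I_0$; it is closed under proper differences and complements because $\leb$ is a \emph{finite} measure on the compact interval $I_0$, so $\leb((E^{c})_\te)=\leb(I_0)-\leb(E_\te)$; and it is closed under increasing countable unions by monotone convergence. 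Dynkin's theorem gives $\mathcal{M}=\mathcal{B}_\toro\times\mathcal{B}_{I_0}$.

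Combining the two steps, $\te\mapsto\leb((E_j)_\te)$ is measurable for each $j$, hence so is $\te\mapsto\eta_n(\te)(J)=\frac1n\sum_{j=1}^{n}\leb((E_j)_\te)$, which is the assertion of the proposition. As remarked at the start of Appendix~\ref{sec:measur}, the Hahn extension machinery (together with $\eta_n(\toro\times I_0)=1<\infty$) then yields that $\eta_n$ is a well-defined Borel measure on $\toro\times I_0$. I do not expect a serious obstacle: everything is routine measure theory, and the only point requiring mild care is the bookkeeping showing that $g_j$ is genuinely $x\mapsto f^{j}_{\al^{-j}(\te)}(x)$, which follows from the skew-product identity $\vfi^{j}(\al^{-j}(\te),x)=(\te,f^{j}_{\al^{-j}(\te)}(x))$ and the fact that $\al$ is invertible under $(H_2^*)$.
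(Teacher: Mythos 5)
Your proof is correct and takes essentially the same route as the paper: express $\leb\big((f^{j}_{\al^{-j}(\te)})^{-1}(J)\big)$ as $\int_{I_0}\chi_J\circ\pi_{I_0}\circ\vfi^{j}\circ(\al^{-1}\times\mathrm{id})^{j}(\te,x)\,d\leb(x)$ and conclude measurability in $\te$ by a Fubini-type argument. The only difference is cosmetic: the paper cites Fubini's theorem where you spell out the standard $\pi$-$\lambda$ proof that $\te\mapsto\leb(E_\te)$ is measurable for $E$ in the product $\sigma$-algebra.
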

\begin{proof}
  Let us fix a set $J\in\mathcal{B}_{I_0}$. To prove the
  measurability of $\te\mapsto\eta_n(\te)(J)$ it suffices to
  prove the measurability of the functions
$\te \mapsto \eta^{i}_{J}(\te):=(f_{\al^{-i}(\te)}^{i})_{*}
\leb(J)$,
for $i\in \N$. Let us define the following functions
\begin{equation*}
\begin{aligned}
\al^{-1}\times id:&\:\,\toro\times I_0 \ra  \toro\times I_0 \\ 
&(\te,x)  \mapsto (\al^{-1}(\te), x)  
\end{aligned}
\qquad
\begin{aligned}
\pi_{\toro}:&\toro\times I_0 \to  \toro \\ 
& \: (\te,x)  \mapsto  \te  
\end{aligned}
\qquad
\begin{aligned}
\pi_{I_0}:&\toro\times I_0 \to  I_0 \\ 
& \: (\te,x)  \mapsto  x  
\end{aligned}
\end{equation*}
and $\chi_J$ is the characteristic function of $J$. The
projection maps are clearly measurable, considering on
$\toro\times I_0$ the $\sigma$-algebra
$\mathcal{B}_{\toro}\times \mathcal{B}_{I_0}$. Since
compositions of measurable maps are measurable maps,
$\al^{-1}\times id (\te,x)=(\al^{-1}\circ
\pi_{\toro}(\te,x), \pi_{I_0}(\te,x))$ is also measurable.

With these notations, we have that $\eta^{i}_{J}(\te)=
\int_{I_0} \phi_i(\te,x)\,d\leb(x)$, where
$\phi_i:\toro\times I_0\ra \R$ is defined by
\begin{equation} \label{phii}
(\te,x) \mapsto \phi_i(\te,x):= 
\chi_J\circ \pi_{I_0}\circ \fhi^i \circ (\al^{-i}\times id)^{i}
(\te,x).
\end{equation}
Using Fubini\s s Theorem, the measurability of $\phi_i$
(considering the $\sigma$-algebra $\mathcal{B}_{\toro}\times
\mathcal{B}_{I_0}$) implies the measurability of $\te\mapsto
\eta^{i}_{J}(\te)$ (considering the $\sigma$-algebra
$\mathcal{B}_{\toro}$) . 
\end{proof}

\subsection{The measures $\mu_n$ are well defined}
\label{sec:measur-mu_n}

We assume the skew-product satisfies the property
$(H_4)$. The proof for the case of $(H_4^*)$ is entirely
analogous. It is enough to substitute $\crit$ by $\disc$.

As in the case of $\eta_n$, the well-definition of the
measures $\mu_n$ follows from Hahn Extension Theorem and the
following result which implies that these measures are
defined on the algebra of the rectangles.

\begin{proposition} \label{measuresmun} Let $J\subset I_0$
  be a borelian set. For every $n\in\N$, the function $\toro\ni
  \te\mapsto \mu_n(\te)(J)$ is measurable.
\end{proposition}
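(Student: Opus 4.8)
The plan is to follow the proof of Proposition~\ref{measuresetan}. Since $\mu_n(\te)$ is the normalized sum of the $n$ measures $(f_{\al^{-j}(\te)}^{j})_{*}(\leb\mid Z(\al^{-j}(\te),\la)\cap H_j(\al^{-j}(\te),\constantB))$, by linearity it suffices to fix $j\in\{1,\dots,n\}$ and prove measurability of $\te\mapsto\mu^j_J(\te)$, where $\mu^j_J(\te)$ denotes the value at $J$ of this $j$-th measure. Writing the pushforward of a restriction of $\leb$ explicitly,
$$\mu^j_J(\te)=\leb\Big(Z(\al^{-j}(\te),\la)\cap H_j(\al^{-j}(\te),\constantB)\cap(f_{\al^{-j}(\te)}^{j})^{-1}(J)\Big)=\int_{I_0}\chi_{W_j}(\te,x)\,d\leb(x),$$
where $W_j:=\{(\te,x)\in\toro\times I_0:\ x\in Z(\al^{-j}(\te),\la)\cap H_j(\al^{-j}(\te),\constantB)\ \text{and}\ f_{\al^{-j}(\te)}^{j}(x)\in J\}$. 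By Fubini's theorem, exactly as in Proposition~\ref{measuresetan}, measurability of $\te\mapsto\mu^j_J(\te)$ follows once $W_j\in\mathcal{B}_{\toro}\times\mathcal{B}_{I_0}$ is established.

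Let $S:\toro\times I_0\to\toro\times I_0$ be $S(\te,x)=(\al^{-j}(\te),x)$, which is measurable because $\al$ is a bimeasurable bijection by $(H_2^*)$. Since $f^{j}_{\al^{-j}(\te)}(x)=\pi_{I_0}(\fhi^{j}(S(\te,x)))$ with $\fhi$ and $\pi_{I_0}$ measurable, one has
$$W_j=S^{-1}\Big(\big(Z(\la)\cap Q_j\big)\cap(\fhi^{j})^{-1}(\toro\times J)\Big),\qquad Q_j:=\{(\te,x)\in\toro\times I_0:\ x\in H_j(\te,\constantB)\},$$
so it remains to show that $Z(\la)$ and $Q_j$ are measurable subsets of $\toro\times I_0$. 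For $Z(\la)$ this is routine: under $(H_4)$ (or $(H_4^*)$) the map $(\te,x)\mapsto Df_\te^{m}(x)=\prod_{k=0}^{m-1}\partial_xf(\al^{k}(\te),f_\te^{k}(x))$ is Borel for each $m$, being a finite product of compositions of the Borel maps $f$, $\fhi^{k}$, $\al$ and $\partial_xf$ (the last Borel as a pointwise limit of difference quotients of the Borel map $f$ along fibers); hence $(\te,x)\mapsto\liminf_m\tfrac1m\log|Df_\te^m(x)|$ is Borel and $Z(\la)=Z\cap\{(\te,x):\liminf_m\tfrac1m\log|Df_\te^m(x)|>2\la\}$ is measurable ($Z$ itself being a measurable set).

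The main obstacle is the measurability of $Q_j$, i.e. that $(\te,x)\mapsto r_j(\te,x)$ and $(\te,x)\mapsto|f_\te^{j}(T_j(\te,x))|$ are measurable; granting this, $Q_j=\{r_j(\cdot,\cdot)>\constantB\}\cap\{|f_{\cdot}^{j}(T_j(\cdot,\cdot))|>3\constantB\}$ is measurable. I would prove this by induction on $j$. For $j=1$, $T_1(\te,x)$ is the open interval bounded by the two points of $\cc_\te\cup\partial I_0$ straddling $x$, with endpoints $a^-(\te,x)=\sup(\{\inf I_0\}\cup(\cc_\te\cap(-\infty,x)))$ and $a^+(\te,x)=\inf(\{\sup I_0\}\cup(\cc_\te\cap(x,+\infty)))$; since $\cc$ is measurable with sections of cardinality at most $p$ by $(H_1)$, it admits a measurable enumeration $c_1(\te)\le\dots\le c_p(\te)$ (measurable selection), in terms of which $a^\pm$ — and hence $r_1$ and $|f_\te(T_1)|$ — are measurable in $(\te,x)$. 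For the inductive step one uses $T_j(\te,x)=T_{j-1}(\te,x)\cap(f_\te^{j-1})^{-1}(T_1(\al^{j-1}(\te),f_\te^{j-1}(x)))$ together with the fact that $f_\te^{j-1}$ restricted to $T_{j-1}(\te,x)$ is a monotone homeomorphism onto its image: this exhibits the endpoints of $T_j(\te,x)$, and therefore $r_j(\te,x)$ and $|f_\te^j(T_j(\te,x))|$, as measurable combinations of the map $(\te,x)\mapsto f_\te^{j-1}(x)=\pi_{I_0}(\fhi^{j-1}(\te,x))$, of $a^\pm$ evaluated at $(\al^{j-1}(\te),f_\te^{j-1}(x))$, and of the endpoints of $T_{j-1}(\te,x)$, which are measurable by the inductive hypothesis. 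In the setting $(H_4^*)$ the argument is identical with $\disc$ in place of $\crit$. Once $Z(\la)$ and $Q_j$ are known to be measurable, the displayed description of $W_j$ together with Fubini's theorem concludes the proof exactly as for $\eta_n$ in Proposition~\ref{measuresetan}.
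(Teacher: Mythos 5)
Your outer reduction coincides with the paper's: express $\mu^j_J(\te)=\int_{I_0}\chi_{W_j}(\te,x)\,d\leb(x)$, apply Fubini, and reduce everything to showing that $Z(\la)$ and the sets $H_j(\cdot,\constantB)$ are Borel, which in turn hinges on the joint measurability of $(\te,x)\mapsto r_j(\te,x)$ and $(\te,x)\mapsto|f_\te^j(T_j(\te,x))|$. Where you genuinely diverge from the paper is in how this last step is handled. The paper proves $r_j$ measurable (Lemma~\ref{rimeasurable}) by combining continuity in $x$ for fixed $\te$ with measurability in $\te$ for fixed $x$ via Mackey's lemma \cite[Lemma 9.2]{mackey52}, obtaining the $\te$-measurability from an elementary claim that $\te\mapsto\sup E_\te$ and $\te\mapsto\inf E_\te$ are measurable for Borel $E$, applied to $\crit^j=\bigcup_{k<j}\fhi^{-k}\crit\cup(\toro\times\partial I_0)$; the length $l_j^*$ is then handled separately (Lemma~\ref{ljmeasurable}) through a $\liminf$ of partition-based approximants. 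You instead propose a direct construction by induction on $j$: enumerate the finite sections $\cc_\te$ by Borel functions $c_1(\te)\le\dots\le c_p(\te)$ via a uniformization theorem (Lusin--Novikov for Borel sets with countable sections on Polish spaces, here with sections of size $\le p$), write $a^\pm(\te,x)$ and hence $T_1$, $r_1$, $l_1^*$ explicitly in terms of the $c_i$, and bootstrap via $T_j=(f_\te^{j-1}|_{T_{j-1}})^{-1}\bigl(T_1(\al^{j-1}\te,f_\te^{j-1}x)\bigr)$. This buys you a single induction treating $r_j$ and $l_j^*$ simultaneously and avoids both Mackey's lemma and the partition-approximation for $l_j^*$, at the price of invoking a measurable-selection result that is heavier machinery than the paper's elementary sup/inf argument. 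One step you should not leave implicit: in the inductive step, passing from the endpoints of $T_1(\al^{j-1}\te,f_\te^{j-1}x)$ to the endpoints of its preimage under $f_\te^{j-1}|_{T_{j-1}(\te,x)}$ requires the Borel measurability in $(\te,x)$ of the corresponding local inverse, which is not automatic; it can be obtained, e.g., by writing each endpoint as a $\sup$ or $\inf$ over the $y$-section of a Borel set such as $\{(\te,x,y): y\in T_{j-1}(\te,x),\ f_\te^{j-1}(y)\le c_i(\al^{j-1}\te)\}$ — essentially re-importing the very sup/inf device the paper uses, so the two proofs are not as independent as they first appear.
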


In the definition of the measures $\mu_n$ appear the sets
$H_j(\te,\constantB)$ ($j\in\N, \te\in\toro$). These sets depend on
the
maps $r_j(\te,x)$ and $l^{*}_{j}(\te,x):=
|f_{\te}^{j}(T^j(\te,x))|$. We study first the measurability
of these functions.

Let us recall the definition of the function $r_i$ (given in
Section \ref{sec:acim}). Given $i\in\N$ and a point $(\te,
x)\in\toro\times I_0$, we denote by $T_i(\te,x)$ the maximal
interval such that $f_{\te}^{j}(T_i(\te,x))\cap
\crit_{\al^j(\te)}=\emptyset$ for all
$j<i$. 
Thus $r_i(\te,x)$ denotes the minimum of the lengths of the
connected components of
$f_{\te}^{i}(T_i(\te,x)\setminus\{x\})$.

\begin{lemma} \label{rimeasurable} The maps $r_i:\toro\times
  I_0 \ra \R$ are measurable, for all $i\in\N$.
\end{lemma}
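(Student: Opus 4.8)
The plan is to express $r_i$ through the endpoints of $T_i(\te,x)$ and thereby reduce everything to the measurability of those endpoints. Recall from the paragraph preceding the statement that $T_i(\te,x)$ is the maximal interval containing $x$ with $f_\te^{\,j}\big(T_i(\te,x)\big)\cap\crit_{\al^j(\te)}=\emptyset$ for all $j<i$. In the setting $(H_4)$ one has $\disc_\te=\emptyset$, so $\cc$ coincides with $\crit$, and since for a $C^3$ map the critical set is the zero set of the derivative and $(f_\te^i)'(y)=\prod_{j<i}f'_{\al^j(\te)}(f_\te^{\,j}(y))$, the interval $T_i(\te,x)$ is exactly the connected component, containing $x$, of $I_0\setminus W_i(\te)$, where $W_i(\te)=\{y:(\te,y)\in W_i\}$ is the section of
\[
  W_i:=\bigcup_{j=0}^{i-1}(\vfi^{\,j})^{-1}(\cc)\ \in\ \mathcal{B}_{\toro}\times\mathcal{B}_{I_0},
\]
a Borel set, being a finite union of preimages of the measurable set $\cc$ (condition $(H_1)$) under the Borel maps $\vfi^{\,j}$. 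On such a component $f_\te^i$ is strictly monotone, hence a $C^3$ diffeomorphism; so, setting
\[
  a_i(\te,x):=\sup\big(\{\inf I_0\}\cup(W_i(\te)\cap(-\infty,x))\big),\qquad b_i(\te,x):=\inf\big(\{\sup I_0\}\cup(W_i(\te)\cap(x,+\infty))\big),
\]
one gets $T_i(\te,x)=\big(a_i(\te,x),b_i(\te,x)\big)$ whenever $x\notin W_i(\te)$ (on the Borel set $W_i$ we simply put $r_i=0$), and therefore
\[
  r_i(\te,x)=\min\big\{\,\big|f_\te^i(x)-f_\te^i(a_i(\te,x))\big|,\ \big|f_\te^i(x)-f_\te^i(b_i(\te,x))\big|\,\big\}.
\]
Since $(\te,y)\mapsto f_\te^i(y)=\pi_{I_0}\big(\vfi^i(\te,y)\big)$ is Borel, the lemma follows once $a_i$ and $b_i$ are shown to be measurable.

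To that end, fix $c\in\R$ and treat $b_i$ (the argument for $a_i$ is symmetric). As $b_i(\te,x)\le\sup I_0$ always, $\{b_i>c\}=\emptyset$ for $c\ge\sup I_0$; for $c<\sup I_0$ a direct inspection of the definition gives
\[
  \{(\te,x):b_i(\te,x)>c\}=\{(\te,x):x\ge c\}\ \cup\ \Big(\{(\te,x):x<c\}\cap\{(\te,x):W_i(\te)\cap(x,c]=\emptyset\}\Big).
\]
The only non-trivial piece is $\{(\te,x):W_i(\te)\cap(x,c]=\emptyset\}$, whose complement is the projection onto $\toro\times I_0$ of the Borel set $\{(\te,x,y)\in\toro\times I_0\times I_0:\ x<y\le c,\ (\te,y)\in W_i\}$. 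Being the image of a Borel set under a continuous map between Polish spaces, this projection is analytic, hence universally measurable; thus its complement, and with it $\{b_i>c\}$, is universally measurable. Consequently $a_i$, $b_i$, and then (by the first paragraph) $r_i$, are universally measurable on $\toro\times I_0$. In particular $r_i$ is measurable with respect to the completion of $\mathcal{B}_{\toro}\times\mathcal{B}_{I_0}$ under $\nu\times\leb$, which is exactly the regularity needed for the Fubini-type computations in Sections~\ref{subs:basic-invariant-measures} and~\ref{sec:acim}, and which yields the $\nu$-measurability of the maps $\te\mapsto\leb\big(H_j(\te,\constantB)\cap\cdots\big)$ used there.

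The main obstacle is precisely this last step: the operation that returns an endpoint of the connected component of a set containing a prescribed point is a genuine projection, so Borel measurability of $a_i,b_i,r_i$ should not be expected, and one is forced, via the measurable projection theorem, to settle for universal (completion) measurability; this costs nothing here, since every subsequent construction only integrates the relevant functions against the probability measures $\nu$ and $\nu\times\leb$. I would close with two remarks. First, the same computation shows that $l^{*}_{j}(\te,x)=\big|f_\te^{\,j}(T_j(\te,x))\big|=\big|f_\te^{\,j}(b_j(\te,x))-f_\te^{\,j}(a_j(\te,x))\big|$ is universally measurable as well, which together with $r_j$ gives the measurability of the sets $H_j(\te,\constantB)$. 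Second, the setting $(H_4^*)$ is handled verbatim after replacing $\crit$ by $\disc$ throughout (so $W_i(\te)=\bigcup_{j<i}(f_\te^{\,j})^{-1}(\disc_{\al^j(\te)})$), the values of $f_\te^{\,j}$ at the endpoints of $T_j(\te,x)$ being interpreted as the appropriate one-sided limits.
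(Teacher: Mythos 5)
Your proof is correct but takes a genuinely different route from the paper's, and the comparison is instructive. The paper first observes that for fixed $\te$ the map $x\mapsto r_i(\te,x)$ is continuous, then invokes a Carath\'eodory-type joint measurability lemma (Mackey, \cite[Lemma~9.2]{mackey52}) to reduce to showing that $\te\mapsto r_i(\te,x)$ is measurable for fixed $x$; for this last step it proves a Claim asserting that, for any Borel set $E\subset\toro\times I_0$, the section-wise supremum $\te\mapsto\sup E_\te$ is Borel measurable, arguing first for open $E$ and then passing to general Borel $E$ via the identity $\sup E_\te=\inf_n\sup B(E,1/n)_\te$. You instead write $r_i$ directly as a Borel function of the two endpoint maps $a_i,b_i$, observe that their sublevel sets are co-projections of Borel sets, and invoke the measurable projection theorem to get analytic (hence universally measurable) sublevel sets, closing with the remark that completion measurability is all the Fubini computations in Sections~\ref{subs:basic-invariant-measures} and~\ref{sec:acim} really use. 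Your approach is more conservative about the strength of the conclusion but, I think, more robust: the obstacle you flag is real, because "endpoint of the connected component of $(W_i(\te))^c$ containing $x$" is a genuine projection and need not be Borel in $(\te,x)$, and in fact the identity $\sup E_\te=\inf_n\sup B(E,1/n)_\te$ used in the paper's Claim is correct when $E$ is closed (so that $\bigcap_n\overline{B(E,1/n)}_\te=E_\te$ and compactness of $I_0$ applies) but fails for general Borel $E$ — e.g. take $E$ the graph of $\chi_{\R\setminus\Q}$, where one gets $\inf_n\sup B(E,1/n)_\te\equiv1$ but $\sup E_\te=0$ on rationals — and $\crit^i$ has no reason to be closed when only measurability of $\cc$ and $\vfi$ is assumed. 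So either one must add a closedness hypothesis on $\cc$ (not stated in $(H_1)$), or one must retreat, as you do, to universal/analytic measurability. The one thing worth making explicit in your write-up is the Fubini justification for completion-measurable integrands (the "Jankov--von Neumann / analytic sets are universally measurable'' route), since the rest of the paper phrases its measurability statements in Borel terms; and, as you note, under $(H_4^*)$ the values of $f_\te^j$ at $a_i,b_i$ in your formula for $r_i$ must be taken as one-sided limits, which exist by hypothesis.
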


\begin{proof} 
  For fixed $\te\in\toro$, $x\mapsto
  r_i(\te,x)$ is a continuous function, since $f_\te^{i}$ (for
$\te\in\toro$, $i\in\N$) are piecewise continuous $C^3$
maps. Hence, by \cite[Lemma 9.2]{mackey52}, we conclude $r_i$ is
  measurable, if for fixed $x\in I_0$ the function
  $\te\mapsto r_i(\te,x)$ is measurable. We claim that this
  last condition is true. 
 To prove it, we write $r_i(\cdot, x)$ as a composition of measurable
maps. 

For $i\in\N$, let us define the set
\begin{equation*}
\crit^{i}=\bigcup_{j=0}^{i-1} \fhi^{-j}\crit \cup (\toro\times
\partial I_0) 
\end{equation*}
Given $(\te,x)\in\toro\times I_0$, the interval
$T_i(\te,x)=(a_i(\te,x),b_i(\te,x))$ can be defined in the
following way
\begin{align*}
a_i(\te,x)=\sup (E^{x-})_{\te}:= \sup \{ y\in I_0; (\te,y)\in E^{x-}
\} \\
b_i(\te,x)=\inf (E^{x+})_{\te}:= \inf \{ y\in I_0; (\te,y)\in E^{x+}
\} 
\end{align*}
where $E^{x-}= (\toro\times (-\infty,x]\cap I_0)\cap
\crit^{i}$ and $E^{x+}= (\toro\times [x,+\infty)\cap
I_0)\cap \crit^{i}$. The sets $E^{x-}$ and $E^{x+}$ are
measurable, since by hypotheses $(H_1)$, $\crit$ is
measurable. Then, for fixed $x\in I_0$, the measurability of
the functions $\te\mapsto a_i(\te,x)$ and $\te\mapsto
b_i(\te,x)$ follows from the next result.

\begin{claim} Let $E$ be a set in
  $\mathcal{B}_{\toro}\times \mathcal{B}_{I_0}$ and let
  $S:\toro\ra I_0, s:\toro\ra I_0$ be functions defined by
  $S(\te)=\sup E_{\te}=\sup \{y\in I_0; (\te,y)\in E\},
  s(\te)=\inf E_{\te}=\inf \{y\in I_0; (\te,y)\in E\}$. Then
  $S$ and $s$ are measurable maps.
\end{claim}
\begin{proof}
  We prove first for the map $S$. Let $b\in \R$ be a
  constant. We want to prove that $S^{-1}((b,+\infty))\in
  \mathcal{B}_{\toro}$.  First, let us suppose that $E$ is
  an open set on $\toro\times I_0$. Let $\te_0$ be any point
  in $S^{-1}((b,+\infty))$. Then there exists $y_0\in I_0$
  such that $y_0 >b$ and $(\te_0,y_0)\in E$. The openness of
  $E$ shows the existence of open sets $A\subset \toro$ and
  $B\subset I_0$ such that $(\te_0,y_0)\in A\times B\subset
  E$. Thus $A\subset S^{-1}((b,+\infty))$ and it shows that
  $S^{-1}((b,+\infty))$ is an open set.

  In the general case, given any measurable set $E$, let us
  consider the sets
\begin{equation*}
  B\left(E,\frac{1}{n}\right)
  =\left\{z\in \toro\times I_0; 
    \:\dist(z,w)<\frac{1}{n} \text{ for some } w\in E\right\}.
\end{equation*}
for $n\in\N$. We consider the functions $S_n(\te)=\sup
\{y\in I_0; (\te,y)\in B(E,1/n)\}$. These functions are
measurable by what we have proved. Since $S=\inf_{n\in\N}
S_n$, the measurability of $S$ follows.

\end{proof}

Using the measurability of $a_i(\te,x)$ and $b_i(\te,x)$ we conclude
the measurability of $\te\mapsto r_i(\te,x)$ (all for fixed $x\in
I_0$). It finishes the proof of Lemma \ref{rimeasurable}.
\end{proof}

Now, we want to prove the measurability of the maps
$l^{*}_{j}$. Let us consider a sequence of measurable
partitions $\ldots\subset\mathcal{P}_{n+1}\subset
\mathcal{P}_{n}\subset\ldots\subset \mathcal{P}_1$ of $I_0$
such that the norm of $\mathcal{P}_n$ is less than
$1/n$. Choose a point $x^{n}_i$ in each $P^{n}_i$ element of
$\mathcal{P}_n$ and define the functions
\begin{equation*}
l^{n}_{j}(\te,x):=  |f_{\te}^{j}(T^j(\te,x^{n}_i))| \text{ for all }
x\in P^{n}_i .
\end{equation*}
We also consider the map $l_j:=\liminf_{n\to\infty} l^{n}_{j}$. 
\begin{lemma} \label{ljmeasurable}
The maps $l_{j}:\toro\times I_0\to \R$ are measurable for all
$j\in\N$. 
\end{lemma}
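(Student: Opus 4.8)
The plan is to reduce the measurability of $l_j=\liminf_{n\to\infty}l^n_j$ to the measurability of its building blocks, using that a $\liminf$ of $\mathcal{B}_{\toro}\times\mathcal{B}_{I_0}$-measurable functions is again measurable. Fix $j,n\in\N$. By construction $l^n_j(\te,\cdot)$ is constant on each (at most countably many) atom $P^n_i$ of $\mathcal{P}_n$, equal there to $g_{n,i}(\te):=|f_\te^j(T_j(\te,x^n_i))|$, so that
\begin{align*}
  l^n_j(\te,x)=\sum_i \chi_{P^n_i}(x)\,g_{n,i}(\te).
\end{align*}
Each summand is the product of $(\te,x)\mapsto\chi_{P^n_i}(\pi_{I_0}(\te,x))$, measurable because $P^n_i\in\mathcal{B}_{I_0}$ and $\pi_{I_0}$ is measurable, with $(\te,x)\mapsto g_{n,i}(\pi_{\toro}(\te,x))$, which is measurable as soon as $g_{n,i}$ is measurable on $\toro$. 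Hence $l^n_j$ is jointly measurable provided each $g_{n,i}$ is, and the whole statement reduces to: for every fixed $x\in I_0$ the map $\te\mapsto|f_\te^j(T_j(\te,x))|$ is measurable on $\toro$.

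To prove this I would write $T_j(\te,x)=(a_j(\te,x),b_j(\te,x))$ and recall from the proof of Lemma~\ref{rimeasurable} that $\te\mapsto a_j(\te,x)$ and $\te\mapsto b_j(\te,x)$ are measurable. Since $f_\te^j$ is a $C^3$ diffeomorphism, in particular continuous, on the open interval $T_j(\te,x)$, its image is an interval whose length equals its diameter, and by density of $\Q$ in $T_j(\te,x)$ this diameter is realized on rationals:
\begin{align*}
  |f_\te^j(T_j(\te,x))|
  =\sup\bigl\{\,|f_\te^j(q)-f_\te^j(q')| : q,q'\in\Q,\ a_j(\te,x)<q<b_j(\te,x),\ a_j(\te,x)<q'<b_j(\te,x)\,\bigr\},
\end{align*}
with the convention $\sup\emptyset=0$ (which occurs exactly when $T_j(\te,x)$ is degenerate, where the left-hand side also vanishes). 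For fixed $q\in\Q$ the map $\te\mapsto f_\te^j(q)=\pi_{I_0}(\fhi^j(\te,q))$ is a composition of measurable maps, hence measurable, exactly as in the proof of Proposition~\ref{measuresetan}; and the conditions $a_j(\te,x)<q<b_j(\te,x)$ cut out a measurable subset of $\toro$ because $a_j(\cdot,x)$ and $b_j(\cdot,x)$ are measurable. Therefore, for each pair $(q,q')\in\Q^2$, the function $\te\mapsto\chi_{\{a_j(\te,x)<q<b_j(\te,x)\}}\,\chi_{\{a_j(\te,x)<q'<b_j(\te,x)\}}\,|f_\te^j(q)-f_\te^j(q')|$ is measurable, and taking the supremum over the countable index set $\Q^2$ gives the desired measurability of $\te\mapsto|f_\te^j(T_j(\te,x))|$.

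Putting the two steps together, each $l^n_j$ is $\mathcal{B}_{\toro}\times\mathcal{B}_{I_0}$-measurable, and hence so is $l_j=\liminf_{n\to\infty}l^n_j$. The argument runs verbatim under $(H_4^*)$ after replacing $\crit$ by $\disc$ throughout; the only point to check is that $f_\te^j$ is still continuous on the open interval $T_j(\te,x)$, which holds because by definition of $T_j(\te,x)$ the first $j$ iterates of this interval avoid the discontinuity sets. I expect the only genuinely substantive point to be the second step — writing the $\te$-dependent length $|f_\te^j(T_j(\te,x))|$, which mixes a varying domain with a varying map, as a countable supremum of honestly measurable functions of $\te$ — and this is precisely where the measurability of the endpoints $a_j,b_j$ from Lemma~\ref{rimeasurable} enters. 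It is also the reason the approximations $l^n_j$ and the $\liminf$ are introduced in the first place: unlike $r_i$, the quantity $\te,x\mapsto|f_\te^j(T_j(\te,x))|$ is in general discontinuous in $x$ (it jumps at preimages of critical points under the first $j$ iterates), so the variable-by-variable argument used for $r_i$ via \cite[Lemma 9.2]{mackey52} does not apply directly and one works instead with the partition approximations.
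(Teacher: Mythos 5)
Your proof is correct and follows essentially the same route as the paper: both reduce the measurability of $l^n_j$ to the fixed-$x$ measurability of $\te\mapsto|f_\te^j(T_j(\te,x))|$ (using that $l^n_j(\te,\cdot)$ is piecewise constant on the atoms of $\mathcal{P}_n$) and then pass to the $\liminf$. The only variation is technical: where you realize the length as a countable supremum of $|f_\te^j(q)-f_\te^j(q')|$ over rationals $q,q'$ interior to $T_j(\te,x)$, the paper uses monotonicity of $f_\te^j$ on $T_j$ to write it directly as $|\pi_{I_0}\circ\fhi^j(\te,a_j(\te,x))-\pi_{I_0}\circ\fhi^j(\te,b_j(\te,x))|$, a difference of two measurable compositions; both work, and yours has the minor advantage of never evaluating $f_\te^j$ at the possibly singular endpoints.
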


\begin{proof}
For fixed $x\in I_0$, the maps $\te\to |f_{\te}^{j}(T^j(\te,x))|$ are
measurable, since 
\begin{equation*}
|f_{\te}^{j}(T^j(\te,x))|=|\pi_{I_0}\circ\fhi^{i}\circ (id, a_i(\cdot,
x)) (\te)- \pi_{I_0}\circ\fhi^{i}\circ (id, b_i(\cdot,x) (\te)| .
\end{equation*}
Therefore the maps $l^{n}_{j}$ are measurable. Obviously it implies
the measurability of maps $l_j$.
\end{proof}

\begin{proof}[Proof of Proposition \ref{measuresmun}]
By Lemma \ref{ljmeasurable}, the map $l_j$ is measurable and
$l_j(\te,x)=l^{*}_j(\te,x)$ if $r_j(\te,x)>0$. By Lemma
\ref{rimeasurable}, the sets $\mathcal{H}_i(\secondelta):=\{ z\in
\toro\times I_0; r_i(z)>\secondelta\}$ are measurable, for any
$\secondelta>0$. These facts imply that
$H_i(\secondelta)=\mathcal{H}_i(\secondelta)\cap
{(l^{*}_{j})}^{-1}(3\secondelta,\infty)$ is a measurable set.


Let us fix a set $J\in\mathcal{B}_{I_0}$. As on Proposition
\ref{measuresetan}, to prove the measurability of
$\te\mapsto\mu_n(\te)(J)$ it suffices to prove the
measurability of the functions
$\te \mapsto \mu^{i}_{J}(\te):= (f_{\al^{-i}(\te)}^{i})_{*} (\leb |
H_{i}(\al^{-i}(\te),\constantB)\cap Z(\al^{-i}(\te),\la))(J)$,   
for $i\in \N$. 
Now, we have that $\mu^{i}_{J}(\te)=
\int_{I_0} \phi_i(\te,x)\psi_i(\te,x)\: d\leb(x)$, where
$\phi_i,\psi_i:\toro\times I_0\ra \R$, $\phi_i$ are
respectively defined in (\ref{phii}) and
\begin{equation*}
(\te,x)\mapsto \psi_i(\te,x):= \bigchi_{H_i(\constantB)}\circ
(\al^{-1}\times
id)^{i}(\te,x) \cdot \bigchi_{Z(\la)}\circ (\al^{-1}\times
id)^{i}(\te,x)
\end{equation*}
Once again, using Fubini\s s Theorem, the measurability of
$(\te,x)\mapsto \phi_i(\te,x) \psi_i(\te,x)$ implies the
measurability of $\te \mapsto \mu^{i}_{J}(\te)$.
\end{proof}


\subsection{Higher-dimensional fibers}
\label{sec:case-with-higher}

\subsubsection{The measures $\eta_n$ are well defined}
\label{sec:high-measur-eta_n}

This case is precisely the same as the case with
one-dimensional fibers, so we have nothing to add.

\subsubsection{The measures $\mu_n$ are well defined}
\label{sec:measures-mu_n-are}

From the definition of $\mu_n$ in the higher dimensional
case, we see that it is enough to show that for every
$n\in\N$ and Borel set $S\subset\Y$ the function
$\toro\ni\theta\mapsto \mu_n(\theta)(S)$ is measurable. For
this it is enough to prove the following.

\begin{lemma}
  \label{le:mu_n-higher-measurable}
  The function $\toro\ni\theta\mapsto
  \Leb\big( \H_j(\alpha^{-j}(\theta)) \cap
  (f^{j}_{\alpha^j(\theta)})^{-1}(S)\big)$ is measurable for each
fixed
  $j\in\N$ and measurable $S\subset\Y$.
\end{lemma}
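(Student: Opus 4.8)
The plan is to follow the proof of Proposition~\ref{measuresmun} almost verbatim, replacing $I_0$ by $\Y$ and the sets $H_j(\te,\constantB)$ by the hyperbolic‑time sets $\H_j(\sigma,\delta,b)$. First I would rewrite the quantity to be shown measurable as an integral over $\Y$. Fix $j$ and a Borel set $S\subset\Y$ and put
\begin{align*}
  \Phi_j(\te,y) &= \chi_S\circ\pi_{\Y}\circ\vfi^{\,j}\circ(\al^{-j}\times\mathrm{id})(\te,y),\\
  \Psi_j(\te,y) &= \chi_{\H_j(\sigma,\delta,b)}\circ(\al^{-j}\times\mathrm{id})(\te,y),
\end{align*}
so that $\Leb\big(\H_j(\al^{-j}(\te))\cap (f_{\al^{-j}(\te)}^{j})^{-1}(S)\big)=\int_{\Y}\Phi_j(\te,y)\,\Psi_j(\te,y)\,d\Leb(y)$. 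By Fubini's theorem (exactly as in Propositions~\ref{measuresetan} and~\ref{measuresmun}) it then suffices to prove that $\Phi_j\Psi_j$ is Borel measurable on $(\toro\times\Y,\ \B_{\toro}\times\B_{\Y})$. Since $\vfi$ is Borel measurable (hence so is every iterate $\vfi^{\,j}$), $\al$ is a bimeasurable bijection (condition $(H_2^*)$, which by Section~\ref{sec:Remove-H2} we may assume), and $\pi_{\Y}$ and $\chi_S$ are measurable, measurability of $\Phi_j$ is immediate; the whole point is therefore the measurability of $\Psi_j$, i.e.\ of the set $\H_n(\sigma,\delta,b)\subset\toro\times\Y$ for each $n$.

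By definition~\eqref{eq:tempo-hip}, $\H_n(\sigma,\delta,b)=\bigcap_{k=0}^{n-1}\big(U_{n,k}\cap W_{n,k}\big)$, where
\begin{align*}
  U_{n,k}&=\Big\{(\te,x):\ \textstyle\prod_{i=n-k}^{n-1}\big\|Df_{\al^i(\te)}(f_\te^i(x))^{-1}\big\|\le\sigma^k\Big\},\\
  W_{n,k}&=\Big\{(\te,x):\ \dist_\delta\big(f_\te^k(x),\ \cc\cap(\{\al^k(\te)\}\times\Y)\big)\ge e^{-bk}\Big\}.
\end{align*}
For $U_{n,k}$: by $(H_6)$ the map $f_1'\colon(\psi,y)\mapsto Df_\psi(y)^{-1}$ is Borel measurable into $\cL(\R^d,\R^d)$ with the operator‑norm topology, on which the norm is continuous, so $(\psi,y)\mapsto\|Df_\psi(y)^{-1}\|$ is Borel; composing with the Borel map $\vfi^{\,i}$ makes each factor $(\te,x)\mapsto\|Df_{\al^i(\te)}(f_\te^i(x))^{-1}\|$ Borel, hence so is the finite product, and $U_{n,k}$ is Borel. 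For $W_{n,k}$: writing $\cc_\psi=\{y':(\psi,y')\in\cc\}$ and $\rho(\psi,y):=\dist(y,\cc_\psi)$, we have $\dist_\delta(y,\cc_\psi)=T_\delta(\rho(\psi,y))$ with $T_\delta(t)=t$ for $t<\delta$ and $T_\delta(t)=1$ for $t\ge\delta$ a Borel truncation; since $f_\te^k(x)=\pi_\Y(\vfi^k(\te,x))$ and $\al^k(\te)=\pi_\toro(\vfi^k(\te,x))$, one gets $W_{n,k}=(\vfi^k)^{-1}\{(\psi,y):T_\delta(\rho(\psi,y))\ge e^{-bk}\}$, so everything reduces to proving that $\rho$ is Borel.

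I expect this last step — measurability of the ``distance to the $\psi$‑section of $\cc$'' — to be the main obstacle, and the plan is to argue exactly as in the Claim inside the proof of Lemma~\ref{rimeasurable}: when $\cc$ is open, $\{\rho<r\}$ is open (given $\rho(\psi_0,y_0)<r$, pick $y_0'\in\cc_{\psi_0}$ with $\dist(y_0,y_0')<r$; openness of $\cc$ supplies a product neighbourhood $A\times B\subset\cc$ of $(\psi_0,y_0')$, and then $\rho(\psi,y)<r$ for $\psi\in A$ and $y$ close to $y_0$), so $\rho$ is Borel in that case; for a general measurable $\cc$ one exhausts it by the open neighbourhoods $B(\cc,1/n)$ and passes to the infimum over $n$, as there. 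Alternatively — and this already covers all the examples in Section~\ref{sec:exampl}, where $\cc$ is closed — compactness of $\Y$ makes the projection $\toro\times\Y\times\Y\to\toro\times\Y$ that forgets the last coordinate a closed map, so $\{\rho\le r\}$, being the image of the closed set $\{(\psi,y,y'):(\psi,y')\in\cc,\ \dist(y,y')\le r\}$, is closed; hence $\rho$ is lower semicontinuous, in particular Borel. Once $\rho$ is Borel, the sets $U_{n,k},W_{n,k}$, hence $\H_n(\sigma,\delta,b)$, hence $\Psi_j$ and $\Phi_j\Psi_j$, are all Borel, and Fubini completes the proof. (If the defining formula for $\mu_n(\theta)$ also restricts to a set $E$ as in Lemma~\ref{le:positive-mass-infinity}, one inserts the extra factor $\chi_E\circ(\al^{-j}\times\mathrm{id})$, measurable since $E$ is — being, like $Z(\la)$ in the one‑dimensional case, defined by a $\limsup$ of Borel functions.)
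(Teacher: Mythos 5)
Your overall plan is the same as the paper's: rewrite the quantity as $\int_\Y\Phi_j\Psi_j\,d\Leb$, apply Fubini, observe $\Phi_j$ is Borel, and reduce everything to measurability of the set $\H_n(\sigma,\delta,b)$, which is what the paper isolates as Lemma~\ref{le:H_n-measurable}. Your decomposition of $\H_n$ into the sets $U_{n,k}$ (handled via $(H_6)$) and $W_{n,k}$ also matches the paper's $g_k,d_k$ decomposition.

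Where you genuinely diverge is the measurability of the truncated distance to the fibre section of $\cc$, i.e.\ the paper's $d_k$. The paper introduces an auxiliary function $\xi\colon\toro\times\Y\times\Y\to[0,\delta]$, $\xi(\theta,x,y)=\dist_\delta(x,y)\,\chi_{\cc}(\theta,y)+\delta\,(1-\chi_{\cc}(\theta,y))$, observes that $\xi$ is Borel, writes $d_k$ as a composition of Borel maps with $D(\theta,x)=\inf_y\xi(\theta,x,y)$, and asserts that $D$ is Borel. You instead isolate the ``distance to the $\psi$-section of $\cc$'' function $\rho(\psi,y)=\dist(y,\cc_\psi)$, prove it Borel directly, and recover $d_k$ by composing $\rho$ with $\vfi^k$ and the truncation $T_\delta$. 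Both routes are equivalent in substance and hit the same delicate point: passing from joint Borel measurability of an object indexed by $(\theta,y)$ to Borel measurability of an infimum (the paper's $\inf_y\xi$) or a section-distance (your $\rho$) is a projection-type statement and is not automatic for a merely Borel set $\cc$; the paper leaves the assertion unproved, and your exhaustion-by-$B(\cc,1/n)$ argument (lifted from the Claim in Lemma~\ref{rimeasurable}) has the same caveat, since $\sup_n\rho_n=\rho$ only under a closure-type hypothesis on $\cc$. Your alternative closed-projection argument (compactness of $\Y$ makes $\{\rho\le r\}$ the image of a closed set under a closed map, hence $\rho$ is lower semicontinuous) is cleaner and fully rigorous whenever $\cc$ is closed, which covers all the paper's examples and is stronger than what the paper writes. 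So: same skeleton, a different and arguably more transparent treatment of the one nontrivial measurability step.
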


Analogously to the previous subsection, we consider the maps
\begin{equation*}
\begin{aligned}
\al^{-1}\times id:&\:\,\toro\times\Y \to  \toro\times\Y \\ 
&(\te,x)  \mapsto (\al^{-1}(\te), x)  
\end{aligned}
\qquad
\begin{aligned}
\pi_{\toro}:&\toro\times \Y \to  \toro \\ 
& \: (\te,x)  \mapsto  \te  
\end{aligned}
\qquad
\begin{aligned}
\pi_{I_0}:&\toro\times \Y \to \Y \\ 
& \: (\te,x)  \mapsto  x  
\end{aligned}
\end{equation*}
and $\chi_S$ the characteristic function of $S$. These
functions are all measurable with respect to the
corresponding Borel $\sigma$-algebras.  We consider also
$\chi_{\H_n}$ the characteristic function of
$\H_n(\sigma,\delta,b)$.

\begin{lemma}
  \label{le:H_n-measurable}
  The set $\H_n(\sigma,\delta,b)$ is a Borel subset of
$\toro\times\Y$.
\end{lemma}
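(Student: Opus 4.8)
The plan is to realise $\H_n(\sigma,\delta,b)$ as a finite intersection of preimages of closed half-lines under Borel measurable real functions on $\toro\times\Y$. Recall that $(\theta,x)\in\H_n(\sigma,\delta,b)$ precisely when, for every $k=0,\dots,n-1$,
\[
g_k(\theta,x):=\prod_{j=n-k}^{n-1}\big\|Df_{\alpha^j(\theta)}\big(f_\theta^j(x)\big)^{-1}\big\|\le\sigma^k
\qquad\text{and}\qquad
h_k(\theta,x):=\dist_\delta\big(f_\theta^k(x),\cc\cap(\{\alpha^k(\theta)\}\times\Y)\big)\ge e^{-bk},
\]
so it suffices to prove that every $g_k$ and every $h_k$ is Borel measurable; then $\H_n(\sigma,\delta,b)$ is a finite intersection of the Borel sets $g_k^{-1}((-\infty,\sigma^k])$ and $h_k^{-1}([e^{-bk},+\infty))$. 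The first step is to note that $\vfi=(\alpha,f)$ is Borel measurable: $\alpha$ is Borel by $(H_2)$ (or $(H_2^*)$), $f$ is Borel by $(H_5)$, and $\mathcal{B}_{\toro\times\Y}=\mathcal{B}_\toro\otimes\mathcal{B}_\Y$ since $\toro$ and $\Y$ are separable metric spaces; hence every iterate $\vfi^k$ is Borel measurable, and so are its two coordinates $(\theta,x)\mapsto\alpha^k(\theta)$ and $(\theta,x)\mapsto f_\theta^k(x)$.

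For the factors appearing in $g_k$, I would compose $\vfi^j$ with the map $f'_1$ of condition $(H_6)$ to conclude that $(\theta,x)\mapsto Df_{\alpha^j(\theta)}(f_\theta^j(x))^{-1}$ is Borel measurable; since the Riemannian metric on the compact manifold $\Y$ depends continuously on the base point, and $x$ and $f_\theta^j(x)$ depend Borel measurably on $(\theta,x)$, the scalar function $(\theta,x)\mapsto\|Df_{\alpha^j(\theta)}(f_\theta^j(x))^{-1}\|$ is Borel measurable. A finite product of Borel functions being Borel, each $g_k$ is Borel measurable.

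The remaining and genuinely delicate point is the measurability of the $h_k$, which reduces to that of the fibrewise distance $\Phi\colon(\theta,x)\mapsto\dist(x,\cc_\theta)$, where $\cc_\theta=\{y\in\Y:(\theta,y)\in\cc\}$: once $\Phi$ is Borel, so is its truncation $\dist_\delta(\,\cdot\,,\cc_\theta)$ (a fixed Borel function of $\Phi$), and precomposing with the Borel map $\vfi^k$ then gives that $h_k$ is Borel. To handle $\Phi$ I would use condition $(H_1)$ in an essential way: it forces $\#\cc_\theta\le p<\infty$ for every $\theta$, so $\cc$ is a Borel subset of the Polish space $\toro\times\Y$ with uniformly finite vertical sections. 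By a standard measurable selection argument (a form of the Lusin--Novikov uniformization theorem), $\cc$ is then a finite union $\cc=\bigcup_{m=1}^{p}\graph(c_m)$ of graphs of Borel maps $c_m\colon D_m\to\Y$ defined on Borel sets $D_m\subseteq\toro$. Consequently $\dist(x,\cc_\theta)=\min\{\dist(x,c_m(\theta)):1\le m\le p,\,\theta\in D_m\}$, with the value $+\infty$ on the Borel set of $\theta$ with $\cc_\theta=\emptyset$, and each term $(\theta,x)\mapsto\dist(x,c_m(\theta))$ is Borel, being the composite of the Borel map $(\theta,x)\mapsto(c_m(\theta),x)$ with the continuous distance function; a finite minimum of Borel functions is Borel, so $\Phi$ is Borel, which finishes the argument.

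The step I expect to be the real obstacle is precisely this fibrewise-distance measurability. Written naively, $\{(\theta,x):\dist(x,\cc_\theta)<r\}$ is the projection along the $y$-coordinate of the Borel set $\{(\theta,x,y):(\theta,y)\in\cc,\,\dist(x,y)<r\}$, hence is only analytic for a general Borel $\cc$; it is the uniform finiteness of the vertical sections of $\cc$ granted by $(H_1)$, through the Lusin--Novikov decomposition, that upgrades this to an honestly Borel set. All the other steps are routine verifications that compositions, finite products and finite minima of Borel maps between Polish spaces are Borel.
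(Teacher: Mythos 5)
Your proof is correct, and at the level of the overall decomposition — writing $\H_n(\sigma,\delta,b)$ as a finite intersection of sublevel/superlevel sets of the functions $g_k$ and $h_k$, and disposing of $g_k$ via $(H_6)$ and compositions — it follows the same route as the paper. Where you diverge, and where the real content lies, is the measurability of the fibrewise truncated distance. The paper introduces the auxiliary function
\[
\xi(\theta,x,y)=\dist_\delta(x,y)\,\chi_{\cc}(\theta,y)+\delta\,(1-\chi_{\cc}(\theta,y)),
\]
observes that $\xi$ is jointly Borel, and then asserts that $D(\theta,x)=\inf_y \xi(\theta,x,y)$ is Borel because $\xi$ is. That inference is not valid in general: the sublevel set $\{D<r\}$ is the projection onto $\toro\times\Y$ of the Borel set $\{(\theta,x,y):\xi(\theta,x,y)<r\}$, hence is a priori only analytic, exactly the obstacle you flag. (Nor is $y\mapsto\xi(\theta,x,y)$ lower semicontinuous, since $\chi_\cc$ is only Borel, so one cannot take the infimum over a countable dense set of $y$.) Your substitute — using $(H_1)$ to get uniformly finite vertical sections of the Borel set $\cc$, then invoking Lusin--Novikov to write $\cc$ as a finite (or at worst countable) union of Borel graphs $c_m\colon D_m\to\Y$, and expressing $\dist(x,\cc_\theta)$ as a finite/countable minimum of the Borel maps $(\theta,x)\mapsto\dist(x,c_m(\theta))$ — is the right way to close this. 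In short, you reproduce the paper's skeleton but supply the missing justification for its central step; the paper's published argument, taken literally, has a gap at precisely the point you isolate.

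One small remark: Lusin--Novikov as usually stated gives a decomposition of a Borel set with countable sections into countably many Borel graphs; that a uniform bound $p$ on section cardinality lets one get by with exactly $p$ graphs requires a little extra bookkeeping, but it is irrelevant for your purposes since a countable infimum of Borel functions is still Borel. Your conclusion stands either way.
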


\begin{proof}
  According to the definition of
  $(\sigma,\delta,b)$-hyperbolic time
  \begin{align*}
    \H_n(\sigma,\delta,b) = \{(\theta,x)\in\toro\times\Y:
    (\ref{eq:tempo-hip}) \text{ is true for } (\theta,x) \}
  \end{align*}
  is an intersection of at most finitely many sets of the
  form $\{(\theta,x)\in\toro\times\Y : g(\theta,x)>c\}$ for
  a measurable function $g:\toro\times\Y\to\R$ and some
  constant $c\in\R$. Indeed, if we define for
  $k=0,\dots,n-1$
  \begin{align*}
    g_k(\theta,x):=\prod_{j=n-k}^{n-1}\|Df_{\alpha^j(\theta)}(
    f^j_\theta(x))^{-1}\| \quad\text{and}\quad
    d_k(\theta,x):=\dist_\delta \big(
    f^k_\theta(x),\cc\cap(\{\alpha^k(\theta)\}\times\Y)
    \big),
  \end{align*}
  then we can write
  \begin{align*}
    \H_n(\sigma,\delta,b) = \{ (\theta,x) \in\toro\times\Y :
    g_k(\theta,x)<\sigma^k \quad\text{and}\quad
    d_k(\theta,x)> e^{-b k}, k=0,\dots,n-1\}.
  \end{align*}
  Thus $\H_n(\sigma,\delta,b)$ is a Borel subset of
  $\toro\times\Y$ as soon as we show that $g_k,d_k$ are
  measurable functions for each $k\ge0$.

  Clearly $g_k$ is measurable from condition $(H_6)$. For
  the functions $d_k:\toro\times\Y\to[0,+\infty)$ we clearly
  have
  \begin{align*}
    d_k(\theta,x)=D(\alpha^k(\theta),f^k_\theta(x))
    \quad\text{where}\quad D(\theta,x)=\inf\xi_{(\theta,x)}
  \end{align*}
  and we define
  \begin{align*}
    \xi(\theta,x,y)=
    \xi_{(\theta,x)}(y):=\dist_\delta(x,y)\cdot\chi_{\cc}(\theta,y)+
    \delta\cdot(1-\chi_{\cc}(\theta,y)).
  \end{align*}
  Clearly $\xi:\toro\times\Y\times\Y\to[0,\delta]$ is
  measurable, so $D:\toro\times\Y\to[0,\delta]$ is also
  measurable and $d_k$ is a composition of $D$ with other
  measurable maps from condition $(H_5)$. This completes the
  argument showing that $\H_n(\sigma,\delta,b)$ is a Borel
  subset of $\toro\times\Y$.
\end{proof}

Now we are ready to prove the first lemma.

\begin{proof}[Proof of Lemma~\ref{le:mu_n-higher-measurable}.]
  We note that we can write
  \begin{align}\label{eq:lambda-higher-dim}
    \Leb\big( \H_j(\alpha^{-j}(\theta)) \cap
    (f^{j}_{\alpha^j(\theta)})^{-1}(S)\big) = \int
    \phi_j(\theta,x)\psi_j(\theta,x) \,d\Leb(x),
  \end{align}
  where
  \begin{align*}
    \phi_j(\theta,x) := \chi_S\circ \pi_{I_0} \circ \fhi^j
    \circ (\al^{-j}\times id)^{j} (\te,x) \quad\text{ and
    }\quad \psi_j(\theta,x) := \chi_{\H_n}\circ
    (\al^{-j}\times id)^{j}(\te,x) .
  \end{align*}
  Since both $\phi_j$ and $\psi_j$ are Borel measurable from
  $\toro\times\Y$ to $\R$, Fubini's Theorem ensures
  that~(\ref{eq:lambda-higher-dim}) is a measurable function
  of $\theta\in\toro$, as we need. This concludes the proof.
\end{proof}

With Lemma~\ref{le:mu_n-higher-measurable} we complete the
proof of the measurability of all functions used in the
previous sections.


\def\cprime{$'$}

 \bibliographystyle{abbrv}

\begin{thebibliography}{10}

\bibitem{Adl96}
K.~Adl-Zarabi.
\newblock Absolutely continuous invariant measures for piecewise
expanding
  {$C^2$} transformations in {${\bf R}^n$} on domains with cusps on
the
  boundaries.
\newblock {\em Ergodic Theory Dynam. Systems}, 16(1):1--18, 1996.

\bibitem{AlSchn}
J.~Alves and D.~Schnellmann.
\newblock Ergodic properties of viana-like maps with singularities in
the base
  dynamics.
\newblock {\em to appear in Proc. Amer. Math. Soc.}, 2012.

\bibitem{Al00}
J.~F. Alves.
\newblock {SRB measures for non-hyperbolic systems with
multidimensional
  expansion}.
\newblock {\em {Ann. Sci. {\'E}cole Norm. Sup.}}, {33}:{1--32},
{2000}.

\bibitem{AA03}
J.~F. Alves and V.~Araujo.
\newblock {Random perturbations of nonuniformly expanding maps}.
\newblock {\em {Ast{\'e}risque}}, {286}:{25--62}, {2003}.

\bibitem{ABV00}
J.~F. Alves, C.~Bonatti, and M.~Viana.
\newblock {SRB measures for partially hyperbolic systems whose central
  direction is mostly expanding}.
\newblock {\em {Invent. Math.}}, {140}({2}):{351--398}, {2000}.

\bibitem{alves-viana2002}
J.~F. Alves and M.~Viana.
\newblock {Statistical stability for robust classes of maps with
non-uniform
  expansion.}
\newblock {\em {Ergodic Theory and Dynamical Systems}}, {22}:{1--32},
{2002}.

\bibitem{ArTah}
V.~Ara{\'u}jo and A.~Tahzibi.
\newblock {Stochastic stability at the boundary of expanding maps.}
\newblock {\em {Nonlinearity}}, {18}:{939--959}, {2005}.

\bibitem{arnold-l-1998}
L.~Arnold.
\newblock {\em {Random dynamical systems}}.
\newblock {Springer-Verlag}, {Berlin}, {1998}.

\bibitem{billingsley99}
P.~Billingsley.
\newblock {\em Convergence of probability measures}.
\newblock Wiley Series in Probability and Statistics: Probability and
  Statistics. John Wiley \& Sons Inc., New York, second edition, 1999.
\newblock A Wiley-Interscience Publication.

\bibitem{Buzz00}
J.~Buzzi.
\newblock Absolutely continuous {S}.{R}.{B}. measures for random
  {L}asota-{Y}orke maps.
\newblock {\em Trans. Amer. Math. Soc.}, 352(7):3289--3303, 2000.

\bibitem{Bu00}
J.~Buzzi.
\newblock {A.c.i.m{'}s for arbitrary expanding piecewise real-analytic
mappings
  of the plane}.
\newblock {\em {Ergod. Th. \& Dynam. Sys.}}, {20}:{697--708}, {2000}.

\bibitem{buzzi-sester-tsujii}
J.~Buzzi, O.~Sester, and M.~Tsujii.
\newblock {Weakly expanding skew-products of quadratic maps}.
\newblock {\em {Ergodic Theory Dynam. Systems}},
{23}({5}):{1401--1414},
  {2003}.

\bibitem{CoFoSi82}
I.~P. Cornfeld, S.~V. Fomin, and Y.~G. Sina\u\i.
\newblock {\em {Ergodic theory}}, volume {245} of {\em {Grundlehren
der
  Mathematischen Wissenschaften [Fundamental Principles of
Mathematical
  Sciences]}}.
\newblock {Springer-Verlag}, {New York}, {1982}.
\newblock {Translated from the Russian by A. B. Sosinski\u\i}.

\bibitem{MS93}
W.~{de Melo} and S.~{van Strien}.
\newblock {\em {One-dimensional dynamics}}.
\newblock {Springer Verlag}, {1993}.

\bibitem{DenkGord99}
M.~Denker and M.~Gordin.
\newblock Gibbs measures for fibred systems.
\newblock {\em Adv. Math.}, 148(2):161--192, 1999.

\bibitem{DenkGordHein02}
M.~Denker, M.~Gordin, and S.-M. Heinemann.
\newblock On the relative variational principle for fibre expanding
maps.
\newblock {\em Ergodic Theory Dynam. Systems}, 22(3):757--782, 2002.

\bibitem{GorBoy89}
P.~G{\'o}ra and A.~Boyarsky.
\newblock Absolutely continuous invariant measures for piecewise
expanding
  {$C^2$} transformation in {${\bf R}^N$}.
\newblock {\em Israel J. Math.}, 67(3):272--286, 1989.

\bibitem{gouezel07}
S.~Gou{\"e}zel.
\newblock Statistical properties of a skew product with a curve of
neutral
  points.
\newblock {\em Ergodic Theory Dynam. Systems}, 27(1):123--151, 2007.

\bibitem{kasriel}
R.~H. Kasriel.
\newblock {\em Undergraduate topology}.
\newblock Dover Publications Inc., Mineola, NY, 2009.
\newblock Reprint of the 1971 original [MR0283741].

\bibitem{KH95}
A.~Katok and B.~Hasselblatt.
\newblock {\em {Introduction to the modern theory of dynamical
systems}},
  volume~{54} of {\em {Encyclopeadia Appl. Math.}}
\newblock {Cambridge University Press}, {Cambridge}, {1995}.

\bibitem{Ke79}
G.~Keller.
\newblock {Ergodicit{\'e} et mesures invariantes pour les
transformations
  dilatantes par morceaux d{'}une r{\'e}gion born{\'e}e du plan}.
\newblock {\em {C.R. Acad. Sci. Paris}}, {A 289}:{625--627}, {1979}.

\bibitem{Ke90}
G.~Keller.
\newblock {Exponents, attractors and Hopf decompositions}.
\newblock {\em {Ergod. Th. \& Dynam. Sys.}}, {10}:{717--744}, {1990}.

\bibitem{mackey52}
G.~W. Mackey.
\newblock Induced representations of locally compact groups. {I}.
\newblock {\em Ann. of Math. (2)}, 55:101--139, 1952.

\bibitem{Man87}
R.~Ma{\~n}{\'e}.
\newblock {\em {Ergodic theory and differentiable dynamics}}.
\newblock {Springer Verlag}, {New York}, {1987}.

\bibitem{Mor85}
T.~Morita.
\newblock Random iteration of one-dimensional transformations.
\newblock {\em Osaka J. Math.}, 22(3):489--518, 1985.

\bibitem{Pelik84}
S.~Pelikan.
\newblock Invariant densities for random maps of the interval.
\newblock {\em Trans. Amer. Math. Soc.}, 281(2):813--825, 1984.

\bibitem{Pinheiro05}
V.~Pinheiro.
\newblock {Sinai-Ruelle-Bowen measures for weakly expanding maps}.
\newblock {\em {Nonlinearity}}, {19}({5}):{1185--1200}, {2006}.

\bibitem{Sa00}
B.~Saussol.
\newblock {Absolutely continuous invariant measures for
multi-dimensional
  expanding maps}.
\newblock {\em {Israel J. Math}}, {116}:{223--248}, {2000}.

\bibitem{schnell08}
D.~Schnellmann.
\newblock Non-continuous weakly expanding skew-products of quadratic
maps with
  two positive {L}yapunov exponents.
\newblock {\em Ergodic Theory Dynam. Systems}, 28(1):245--266, 2008.

\bibitem{schnell09}
D.~Schnellmann.
\newblock Positive {L}yapunov exponents for quadratic skew-products
over a
  {M}isiurewicz-{T}hurston map.
\newblock {\em Nonlinearity}, 22(11):2681--2695, 2009.

\bibitem{solano}
J.~Solano.
\newblock Non-uniform hyperbolicity and existence of absolutely
continuous
  invariant measures.
\newblock {\em Bull. of the Braz. Math. Soc.}, 44:67-103, 2013.

\bibitem{Tsu05}
M.~Tsujii.
\newblock Physical measures for partially hyperbolic surface
endomorphisms.
\newblock {\em Acta Math.}, 194(1):37--132, 2005.

\bibitem{Vi97}
M.~Viana.
\newblock {Multidimensional nonhyperbolic attractors}.
\newblock {\em {Inst. Hautes {\'E}tudes Sci. Publ. Math.}},
{85}:{63--96},
  {1997}.

\end{thebibliography}

\end{document}